\numberwithin{equation}{section}
\newtheorem{theorem}{Theorem}[section]
\newtheorem{lemma}[theorem]{Lemma}
\theoremstyle{definition}
\newtheorem{definition}[theorem]{Definition}
\theoremstyle{remark}
\newtheorem{remark}[theorem]{Remark}
\numberwithin{equation}{section}
\newcommand{\Nedelec}{N{\'{e}}d{\'{e}}lec}
\newcommand{\Hormander}{H{\"{o}}rmander}
\newcommand{\im}{\mathbf{i}}
\newcommand{\normal}{\boldsymbol{n}}
\begin{document}

\title[FOSLS for the Helmholtz equation]{A first order system least squares method for the Helmholtz equation}

\author{Huangxin Chen}
\address{School of Mathematical Sciences and Fujian Provincial Key Laboratory on Mathematical Modeling and High Performance Scientific Computing, Xiamen University, Fujian, 361005, China} 
\email{chx@xmu.edu.cn} 

\author{Weifeng Qiu}
\address{Department of Mathematics, City University of Hong Kong, 83 Tat Chee Avenue, Kowloon, Hong Kong, China} 
\email{weifeqiu@cityu.edu.hk}
\thanks{Corresponding author: Weifeng Qiu (weifeqiu@cityu.edu.hk)}

\begin{abstract}
We present a first order system least squares (FOSLS) method for the Helmholtz equation 
at high wave number $k$, which always leads to a Hermitian positive definite algebraic system. 
By utilizing a non-trivial solution decomposition to the dual FOSLS problem which is quite different from that of 
the standard finite element methods, we give an error analysis to the $hp$-version of the FOSLS method where the 
dependence on the mesh size $h$, the approximation order $p$, and the wave number $k$ is given explicitly. 
In particular, under some assumption of the boundary of the domain, the $L^2$ norm error estimate of the scalar 
solution from the FOSLS method is shown to be quasi optimal under  the condition that $kh/p$ is sufficiently small 
and the polynomial degree $p$ is at least $O(\log k)$. Numerical experiments are given to verify the theoretical results.
\end{abstract}

\thanks{We are grateful to Professor Markus Melenk for valuable discussions on the theory in this paper. The first author 
would like to thank the supports from the National Natural Science Foundation of China (Grant No. 11201394) and
the Natural Science Foundation of Fujian Province (Grant No. 2013J05016). The work of the second author was 
partially supported by a grant from the Research Grants Council of the Hong Kong Special Administrative Region, 
China (Project No. CityU 109713).}

\subjclass[2000]{65N30, 65L12}

\keywords{First order system least squares method, Helmholtz equation, high wave number, pollution error, stability, error estimate}

\maketitle

\section{Introduction}
Lots of least squares methods have been extensively studied for the efficient and accurate numerical approximation of many 
partial differential equations such as the elliptic, elasticity and Stokes equations. As mentioned in \cite{cai06}, there are 
three kinds of least-squares methods: the inverse approach, the div approach, and the div-curl approach. The interest of 
this paper is to consider the div approach least squares method which applies a chosen $L^2$ norm to a natural first order 
system for the Helmholtz equation with Robin boundary condition which is the first order approximation of the radiation condition:  
\begin{subequations}
\label{hwn_pde}
\begin{align}
\label{hwn_pde1}
 -\Delta u - k^{2} u & = f\quad\text{ in }\Omega,\\
\label{hwn_pde2}
 \dfrac{\partial u}{\partial \normal} - \im k u & = g \quad\text{ on }\partial \Omega,  
\end{align}
\end{subequations}
where $\Omega\subset \mathbb{R}^{d}$ ($d=2\text{ or } 3$) is a bounded, Lipschitz and connected domain, 
the wave number $k$ is real and positive, and $\im$ denotes the imaginary unit. 
%The Helmholtz problem (\ref{hwn_pde}) 
%is an approximation of the acoustic scattering problem with time dependence $e^{-{\bf i} \omegat}$. 
%If the scattering problem is proposed with time dependence $e^{{\bf i} \omega t}$, that is, 
We want to point out that if the sign before ${\bf i}$ in (\ref{hwn_pde2}) is positive, the corresponding 
least squares method and theoretical analysis in this paper also hold. 
We impose further assumptions on the domain $\Omega$ in the following:
\begin{enumerate}
\item[(A1)] There is a constant $C>0$ such that for any $f\in L^{2}(\Omega)$ and $g\in L^{2}(\partial\Omega)$, 
the Helmholtz equation (\ref{hwn_pde}) has a unique solution $u\in H^{1}(\Omega)$ satisfying 
\begin{align*}
\Vert \nabla u\Vert_{L^{2}(\Omega)} + k\Vert u\Vert_{L^{2}(\Omega)} \leq 
C \left(\Vert f\Vert_{L^{2}(\Omega)} + \Vert g\Vert_{L^{2}(\partial\Omega)} \right).
\end{align*}
\item[(A2)] The boundary of $\Omega$ is analytic.
\end{enumerate}
The above assumptions are intrinsic for the analysis in this paper, while the least squares method 
can be applied for more general cases. In fact, \cite{Hetmaniuk07} shows the assumption (A1) 
holds if the domain $\Omega$ is star-shaped; and \cite[Theorem~$1.8$]{Baskin2015} obtains 
the same estimate without the star-shaped restriction.

Due to the well-known pollution effect for the numerical solution of the Helmholtz equation, the standard 
Galerkin finite element methods can maintain a desired accuracy only if the mesh resolution is appropriately 
increased. Numerous nonstandard methods have been proposed in the literature to obtain more stable and 
accurate approximation, which includes quasi-stabilized finite element methods \cite{Babu}, absolutely 
stable discontinuous Galerkin (DG) methods \cite{Wu,Wuhp,feng,GM2011,clx2013}, continuous interior 
penalty finite element methods \cite{Wu2011,Wu2013}, the partition of unity finite element methods \cite{Melenk1,Melenk2}, 
the ultra weak variational formulation \cite{CD}, plane wave DG methods \cite{Amara,Hiptmair}, spectral methods \cite{Shen}, 
generalized Galerkin/finite element methods \cite{Babu-0,Melenk}, meshless methods \cite{Babu-1}, 
and the geometrical optics approach \cite{Engquist}. 

Generally, the linear systems from most of the above nonstandard Galerkin finite element approximations of the 
Helmholtz equation with high wave number $k$ are strongly indefinite. But the least-squares Galerkin method for 
the Helmholtz equation always yields a Hermitian positive definite system \cite{Chang90,Lee00}. Hence it attracts 
the design of an efficient solver. For instance, a div-curl approach least squares method was applied to the Helmholtz 
equation in \cite{Lee00}, and an efficient solver based on wave-ray multigrid was proposed. 
%Recently, the DPG$_\varepsilon$ method, which is of the least-squares type, was proposed 
%in \cite{Jay2013}. The DPG$_\varepsilon$ solution may yield less pollution error than the general 
%first order system least squares (FOSLS) method with fixed polynomial degree $p$ 
%and on the same mesh, but it needs to reconstruct the optimal test space. 
Recently,  numerical results in \cite{GopalSchob:2014:preconditioner} show that a multiplicative 
Schwarz algorithm, without coarse solver, provides a $p$-preconditioner for solving the DPG system. 
The numerical observations suggest that the condition number of the preconditioned system is independent of the wavenumber $k$ and the polynomial degree $p$. Since both DPG methods and FOSLS are residual minimization methods such that their linear systems are Hermitian positive definite, it is promising that the multiplicative Schwarz preconditioner in \cite{GopalSchob:2014:preconditioner} will provide similar preconditioning for FOSLS. We will show the effect of the multiplicative Schwarz preconditioner for our FOSLS in a separate paper.

A key result revealed by J.M.~Melenk and S.~Sauter in \cite{MelenkSIAM11} is that the polynomial degree $p$ 
should be chosen in a wavenumber-dependent way to yield optimal convergent conditions. This important 
result was analyzed based on the standard Galerkin finite element method. It shows that, under the assumption 
that the solution operator for Helmholtz problems is polynomially bounded in $k$, quasi optimal convergence 
can be obtained under the conditions that $kh/p$ is sufficiently small and the polynomial degree $p$ is at least 
${\rm O}({\rm log}\, k)$.

An objective of this paper is to extend the key result in \cite{MelenkSIAM11} to the div approach FOSLS method, which will be 
called FOSLS method for brevity in the following. We use the standard Raviart-Thomas finite element space and continuous 
piece-wise polynomial finite element space for the discretization of the FOSLS method. The stability of the FOSLS solutions 
for the Helmholtz equation can be obtained by the property of FOSLS formulation and a Rellich-type identity approach. 
The main difficulty in the analysis lies in the establishment of quasi optimal convergence for the FOSLS method. 
We first mimic the technique proposed in \cite{MelenkSIAM11} to decompose the Helmholtz solution into an oscillatory 
analytic part and a nonoscillatory elliptic part. A key estimate for the oscillatory analytic part of the Helmholtz solution 
(cf. (\ref{decomp_ineq3}) in Theorem~\ref{thm_divergence_regularity}) is further derived for the error analysis of the FOSLS method 
for the Helmholtz equation. Another crucial estimate lies in the derivation of the dependence of convergence on the 
polynomial degree $p$. A new $H(\text{div})$ projection is designed to overcome this problem, and some important estimates, 
which reveal the dependence of the projection error on $k,h,p$, for this $H(\text{div})$ projection are obtained. 
In Remark~\ref{remark_rt}, we explain why it is necessary to use Raviart-Thomas space instead of vector valued 
continuous piece-wise polynomial space to approximate vector fields in $H(\text{div},\Omega)$.
In Remark~\ref{remark_div_space}, we give detailed explanation why the projection-based interpolation 
in \cite{Demko:2008:PBI} can {\em not} be applied for the quasi optimal convergent estimate for the Helmholtz equation. 
The most important part of the analysis lies in a modified duality argument for the FOSLS method which is motivated by 
the duality argument used in \cite{cai06}. Roughly speaking, the corresponding dual FOSLS problem is to find 
$(\boldsymbol{\psi}, v)\in \{ \boldsymbol{\psi}\in  H(\text{div},\Omega): \boldsymbol{\psi}
\cdot\normal |_{\partial \Omega} \in L^{2}(\partial\Omega) \}\times H^{1}(\Omega)$ satisfying 
\begin{align*}
& \Vert u - u_{h}\Vert_{L^{2}(\Omega)}^{2} \\
= & (\im k (\boldsymbol{\phi}-\boldsymbol{\phi}_{h})+\nabla (u - u_{h}), \im k \boldsymbol{\psi} + \nabla v)_{\Omega}\\
& + (\im k (u - u_{h})+ \nabla\cdot (\boldsymbol{\phi} - \boldsymbol{\phi}_{h}), \im k v 
+ \nabla\cdot \boldsymbol{\psi})_{\Omega}\\
& + k\langle (\boldsymbol{\phi}-\boldsymbol{\phi}_{h})\cdot \normal + (u - u_{h}), 
\boldsymbol{\psi}\cdot \normal + v \rangle_{\partial\Omega}.
\end{align*}
Here, $\im k \boldsymbol{\phi} + \nabla u = 0$, and $(\boldsymbol{\phi}_{h}, u_{h})$ is the numerical 
approximation to $(\boldsymbol{\phi}, u)$. Then, the regularity estimates for the oscillatory analytic part 
$(\boldsymbol{\psi}_{A}, v_{A})$ and the nonoscillatory elliptic part $(\boldsymbol{\psi}_{H^{2}}, v_{H^{2}})$ 
of the solution of the above dual FOSLS problem are deduced. Since the above dual FOSLS problem is quite 
different from the dual problem used in \cite{MelenkJSC13, MelenkSIAM11}, these regularity estimates, 
especially the estimate of $\Vert \nabla\cdot\boldsymbol{\psi}_{H^{2}}\Vert_{H^{1}(\Omega)}$ 
(cf. (\ref{test_func_ineq5}) in Lemma~\ref{lemma_test_function}), gets involved with 
non-trivial modification to the original proof of solution decomposition in \cite{MelenkSIAM11}.
Finally the quasi optimality of the $L^2$ norm error estimate for the scalar solution of the FOSLS method 
for the Helmholtz equation can be finally obtained under the conditions that $kh/p$ is sufficiently small 
and the polynomial degree $p$ is at least ${O}({\rm log}\, k)$. 

We want to emphasize that FOSLS is closely related to the discontinuous Petrov-Galerkin (DPG) methods,
see \cite{Bouma2014,Broersen2014,cai2015,Vcalo_Plates,Carstensen2014,ChanHeuerTanDemkowicz:DPG_CD,ChanEvansQiu, 
CF,DemkoGopal:2010:DPG1,DemkoGopal:2010:DPG2,DemkoGopal:2013:DPG3,
Ellis_localDPG, GopaQiu:PracticalDPG,Heuer_transit,Heuer_trace,Roberts_Stokes}.
Recently, the DPG$_\varepsilon$ method, which is of the least-squares type, was proposed in \cite{Jay2013}. 
The DPG$_\varepsilon$ solution may yield less pollution error than the general FOSLS with fixed polynomial degree $p$ 
and on the same mesh. The analysis for FOSLS in this paper can be useful to develop and analyze pollution free 
DPG methods. In addition, the implementation of DPG methods have been significantly simplified in \cite{Roberts_DPGcode}.

The organization of the paper is as follows: We introduce some notation, the FOSLS method, and the main result in the next section. 
Section 3 is devoted to the proof of the stability estimate of the FOSLS method for the Helmholtz equation.
In Section 4, we present some auxiliary results for the regularity estimates of the oscillatory analytic part and the nonoscillatory 
elliptic part of the Helmholtz solution, and the approximation properties of the finite element spaces. The regularity estimates of 
the solution to the dual  FOSLS problem and the proof of a quasi optimal convergent result of this paper are stated in Section 5. 
In the final section, we give some numerical results to confirm our theoretical analysis.

\section{The first order system least squares method, and main results}

\subsection{Geometry of the mesh}
We describe the meshes we are going to use. We first introduce the concept of generalized cell.
Next, we define a $C^{0}$-compatible mesh and then the so-called quasi-uniform regular meshes 
which are the meshes we are going to work with. Finally, we propose a way to generate
quasi-uniform regular meshes.

\subsubsection{Reference cells and curved cells}
We denote by $\widehat{K}$ the reference cell in $\mathbb{R}^{d}$.
This closed set is the standard unit tetrahedron when $d=3$. It is 
the standard unit triangle when $d=2$. We denote by $\bigtriangleup_{m}(\widehat{K})$ 
the collection of all $m$-dimensional subcells of $\widehat{K}$ for $0\leq m \leq d-1$. 
They are all faces of $\widehat{K}$ when $m=2$, are all edges of $\widehat{K}$ when $m=1$, 
and all vertexes of $\widehat{K}$ when $m=0$.

\begin{definition}
\label{generalzied_cell}
A closed subset $K$ of $\mathbb{R}^{d}$ is a generalized $d$-dimensional cell 
if there is a $C^{1}$-diffeomorphism $G_{K}$ 
from the reference cell $\widehat{K}$ to $K$ such that $G_{K}\in C^{\infty}(\widehat{K})$. 
\end{definition}

We denote by $h_{K}$ the diameter of $K$. We also denote by $\bigtriangleup_{m}(K)$ the collection
of all $m$-dimensional subcells of $K$, which are exactly $G_{K}(\bigtriangleup_{m}(\widehat{K}))$. 
Note that all points $x$ in $K$ are of the form $x=G_{K}(\widehat{x})$ 
where $\widehat{x}$ lies in $\widehat{K}$.

\subsubsection{$C^{0}$-compatible mesh}
We denote by $\mathcal{T}_{h}$ the finite collection of generalized cells in 
$\mathbb{R}^{d}$ such that for any two different 
generalized cells $K,K^{\prime}\in \mathcal{T}_{h}$, either $K\cap K^{\prime}=\emptyset$ or $K\cap K^{\prime}\in 
\bigtriangleup_{m}(K)\cap\bigtriangleup_{m}(K^{\prime})$ for some $0\leq m\leq d-1$. Here, the parameter $h$ is the 
maximum of the diameters $h_K$ of the cells $K$ in $\mathcal{T}_{h}$. 

We denote by  $\bigtriangleup_{d-1}(\mathcal{T}_{h})$ the collection of $\bigtriangleup_{d-1}(K)$ for all cells 
$K$ in $\mathcal{T}_{h}$. Notice that for any $F\in \bigtriangleup_{d-1}(\mathcal{T}_{h})$, either $F=K\cap K^{\prime}$ 
with $K,K^{\prime}\in \mathcal{T}_{h}$ or $F\subset\partial\Omega$ where $\Omega$ is an open subset in $\mathbb{R}^{d}$ 
such that $\overline{\Omega}=\cup_{K\in\mathcal{T}_{h}}K$. 

\begin{definition}
\label{C0_comp}
We say that $\mathcal{T}_{h}$ is a $C^{0}$-compatible mesh if,
for any two subcells $\widehat{F},\widehat{F}^{\prime}\in \bigtriangleup_{d-1}(\widehat{K})$,
where $K,K^\prime\in \mathcal{T}_h$,  
such that $G_{K}(\widehat{F})=G_{K^{\prime}}(\widehat{F}^{\prime})$, 
there is an affine mapping $\mathcal{R} :\widehat{F}\rightarrow\widehat{F}^{\prime}$ 
satisfying
\begin{equation}
\label{C0_comp_eq}
G_{K}|_{\widehat{F}}=G_{K^{\prime}}|_{\widehat{F}^{\prime}}\circ\mathcal{R}. \end{equation} 

We call $K$ an element of $\mathcal{T}_{h}$. And, we call $F\in\bigtriangleup_{d-1}(\mathcal{T}_{h})$ 
a face in $\mathcal{T}_{h}$. 
\end{definition}
The $C^{0}$-compatible mesh is introduced in \cite{hpbook}.

\subsubsection{The quasi-uniform regular meshes}
We use the symbol $\nabla^{n}$ to denote derivatives of order $n$; more precisely, for a function 
$u:\Omega \rightarrow \mathbb{R}, \Omega\subset\mathbb{R}^{d}$, we define 
\begin{align}
\label{high_der}
\vert \nabla^{n} u (x)\vert^{2} = \Sigma_{\alpha\in \mathbb{N}_{0}^{d}:\vert \alpha\vert = n} \dfrac{n!}{\alpha !}
\vert D^{\alpha} u (x)\vert^{2}. 
\end{align}
Here, $\mathbb{N}_{0}$ is the set of all non-negative integers.
Now, we are ready to give the description of meshes we are going to use in this paper.
\begin{definition}(quasi-uniform regular meshes)
\label{k_regular_meshes}
Let $\{\mathcal{T}_{h}\}_{h\in\mathrm{I}}$ be a family of $C^{0}$-compatible meshes. We call 
$\{\mathcal{T}_{h}\}_{h\in\mathrm{I}}$ a family of quasi-uniform regular meshes if for any 
$h\in\mathrm{I}$ and any $K\in\mathcal{T}_{h}$, 
\[
\sup_{\widehat{x}\in\widehat{K}}\Vert (\nabla G_{K}(\widehat{x}))^{-1}\Vert
\leq C_{G}h^{-1},\qquad
\sup_{\widehat{x}\in\widehat{K}}\Vert \nabla^{i}G_{K}(\widehat{x})\Vert
\leq C_{G}h^{i}\gamma^{i} i!\quad \forall i\geq 0,
\]
where $C_{G}, \gamma$ are a positive constants independent of $h$ and of $K$, 
$\Vert \cdot\Vert$ is the Euclidean norm.
\end{definition}

Throughout this paper, we assume that the domain $\Omega$ admits a family of 
quasi-uniform regular meshes $\{\mathcal{T}_{h}\}_{h\in\mathrm{I}}$
such that $\overline{\Omega}=\cup_{K\in\mathcal{T}_{h}}K$ for any $h\in\mathrm{I}$.
As usual, we can always pick an $h$ in $\mathrm{I}$ arbitrarily
close to zero.

\subsubsection{Isoparametric refinement}\label{proceeding_refinement}
Next, we present a way of generating a family of quasi-uniform regular meshes for $\Omega$.
 We begin by obtaining a $C^{0}$-compatible mesh for $\Omega$, $\mathcal{T}_{h_0}$,  and by setting
$G^0 := \{ G^0_K,\; \forall K \in \mathcal{T}_{h_0} \}$. 
To obtain a finer mesh $\mathcal{T}_{h_1}$, we first divide the reference element 
$\widehat{K}$ uniformly into elements $\widehat{K}'$. Then we refine the actual element $K$ 
via the mapping $G^0_K$, that is $K' = G^0_K(\widehat{K}')$. 
The remaining meshes are obtained by repeating this process. It is not difficult to verify that 
the family of meshes obtained in this manner is quasi-uniform regular if we have that
\[
\sup_{\widehat{x}\in\widehat{K}}\Vert (\nabla G_{K}^{0}(\widehat{x}))^{-1}\Vert
\leq C_{G}h_{K}^{-1},\qquad
\sup_{\widehat{x}\in\widehat{K}}\Vert \nabla^{i}G_{K}^{0}(\widehat{x})\Vert
\leq C_{G}h_{K}^{i}\gamma^{i} i!\quad \forall i\geq 0,
\]
for any $K\in\mathcal{T}_{h_0}$. We emphasize that the meshes satisfying \cite[Assumption~$5.2$]{MelenkMC10} 
are quasi-uniform regular.

\subsection{First order system least squares method}
We define complex valued vector field space and scalar function space
\begin{equation}
\label{sol_space}
\boldsymbol{V} = \{\boldsymbol{\phi}\in H(\text{div}, \Omega): \boldsymbol{\phi}|_{\partial\Omega} 
\in L^{2}(\partial \Omega; \mathbb{R}^{d}) \},\qquad 
W = H^{1}(\Omega).
\end{equation}

For any mesh $\mathcal{T}_{h}$ and any $p\geq 0$, we denote by 
\begin{subequations}
\label{sol_space_discrete}
\begin{align}
\label{sol_space_discrete1}
\boldsymbol{V}_{h} & = \{\boldsymbol{\phi}\in H(\text{div},\Omega):  
\det(DG_{K})DG_{K}^{-1}\left( \boldsymbol{\phi}|_{K}\circ G_{K} \right) \in \boldsymbol{RT}_{p+1}(\widehat{K}) 
\text{ for any }K\in\mathcal{T}_{h} \},\\
\label{sol_space_discrete2}
W_{h} & = \{ v \in W: v|_{K}\circ G_{K}\in P_{p+1}(\widehat{K}) \text{ for any }K\in\mathcal{T}_{h}\},
\end{align}
\end{subequations}
where $\boldsymbol{RT}_{p+1}(\widehat{K})=P_{p+1}(\widehat{K};\mathbb{R}^{d}) + \boldsymbol{x} P_{p+1}(\widehat{K})$ 
and $P_{p+1}(\widehat{K})$ are complex valued Raviart-Thomas space and 
complex valued polynomial with order up to $p+1$, respectively. Notice that $\boldsymbol{V}_{h}\subset\boldsymbol{V}$ 
and the restriction of $\boldsymbol{V}_{h}$ on each element $K$ is exactly $\boldsymbol{RT}_{p+1}(\widehat{K})$ mapped 
onto $K$ via the Piola transform corresponding to $G_{K}$.

The least squares functional is defined as 
\begin{align*}
& R((\boldsymbol{\phi}, u); (f ,g)) \\
= & \Vert \im k \boldsymbol{\phi} + \nabla u \Vert_{L^{2}(\Omega)}^{2} 
+ \Vert \im k u + \nabla\cdot \boldsymbol{\phi} + \im f k^{-1}  \Vert_{L^{2}(\Omega)}^{2}\\
&\quad + \Vert k^{1/2} (\boldsymbol{\phi}\cdot\normal + u - k^{-1/2} g) \Vert_{L^{2}(\partial\Omega)}^{2}
\quad \forall (\boldsymbol{\phi}, u)\in \boldsymbol{V}\times W.
\end{align*}
The first order system least squares (FOSLS) method is to find 
$(\boldsymbol{\phi}_{h}, u_{h})\in \boldsymbol{V}_{h}\times W_{h}$ by 
\begin{align}
\label{fosls_hwn}
&  b((\boldsymbol{\phi}_{h}, u_{h}), (\boldsymbol{\psi}, v))\\
\nonumber
=& (-\im fk^{-1}, \im k v+ \nabla\cdot \boldsymbol{\psi})_{\Omega} 
+ \langle \im g,  \boldsymbol{\psi}\cdot\normal + v\rangle_{\partial \Omega}\quad\forall (\boldsymbol{\psi},v)\in 
\boldsymbol{V}_{h}\times W_{h}.
\end{align}
Here, for any $(\boldsymbol{\phi}, u), (\boldsymbol{\psi}, v)\in \boldsymbol{V}\times W$, 
\begin{align}
\label{biform_hwn}
& b((\boldsymbol{\phi}, u), (\boldsymbol{\psi}, v)) \\
\nonumber
= & (\im k \boldsymbol{\phi}+\nabla u, \im k \boldsymbol{\psi} + \nabla v)_{\Omega} 
+ (\im k u + \nabla\cdot \boldsymbol{\phi}, \im k v + \nabla\cdot \boldsymbol{\psi})_{\Omega}
 + k \langle \boldsymbol{\phi}\cdot\normal + u, \boldsymbol{\psi}\cdot\normal + v \rangle_{\partial\Omega}.
\end{align}
For any complex valued functions $u$ and $v$, we define 
\begin{align*}
(u, v)_{\Omega} = \int_{\Omega} u \bar{v} \qquad \langle u, v \rangle_{\partial \Omega} 
= \int_{\partial\Omega} u \bar{v}.
\end{align*}
If $u\in H^{1}(\Omega)$ is the solution of the Helmholtz equation (\ref{hwn_pde}), then 
$(\boldsymbol{\phi}= \im k^{-1}\nabla u , u)$ satisfies 
\begin{equation}
\label{fosls_hwn_pde}
b((\boldsymbol{\phi}, u), (\boldsymbol{\psi}, v)) = 
 (-\im fk^{-1}, \im k v+ \nabla\cdot \boldsymbol{\psi})_{\Omega} 
+ \langle \im g,  \boldsymbol{\psi}\cdot\normal + v\rangle_{\partial \Omega}
\quad\forall (\boldsymbol{\psi},v)\in \boldsymbol{V}\times W.
\end{equation}

We notice that the FOSLS method (\ref{fosls_hwn}) is very similar to the one in \cite{Lee00} except that 
we use the Raviart-Thomas space to approximate $\boldsymbol{\phi}$ and the Robin boundary condition 
is imposed weakly. In Remark~\ref{remark_rt}, we explain why it is necessary to use Raviart-Thomas space 
instead of vector valued continuous piece-wise polynomial space to approximate vector fields in $H(\text{div},\Omega)$. 
In Remark~\ref{remark_weight}, we explain why we weight with the factor $k$ to the inner product 
$\langle \boldsymbol{\phi}\cdot\normal + u, \boldsymbol{\psi}\cdot\normal + v \rangle_{\partial\Omega}$ 
on the boundary $\partial\Omega$.

\subsection{Main result}
We outline the main result in the following by showing the stability and 
the quasi optimality of the FOSLS method for the Helmholtz equation.

\begin{theorem} (Stability)
\label{thm_stability}
We assume that the assumption (A1) holds.
There is a constant $C$, which is independent of the wave number $k\geq k_{0}>0$, such that  
\begin{align*}
\Vert \boldsymbol{\phi}\Vert_{L^{2}(\Omega)}^{2} + \Vert u\Vert_{L^{2}(\Omega)}^{2}
+ k\Vert \boldsymbol{\phi}\cdot\normal + u \Vert_{L^{2}(\partial \Omega)}^{2}
\leq C b((\boldsymbol{\phi}, u), (\boldsymbol{\phi}, u))\quad 
\forall (\boldsymbol{\phi}, u)\in \boldsymbol{V}\times W.
\end{align*}
\end{theorem}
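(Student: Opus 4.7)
The plan is to reduce the stability bound to estimates on $\|u\|_{L^{2}(\Omega)}$ and $\|u\|_{L^{2}(\partial\Omega)}$, each obtained from a short duality argument based on assumption~(A1), together with one elementary integration-by-parts identity that converts $\|r_{1}\|_{L^{2}(\Omega)}^{2}$ into a quantity involving $\|\boldsymbol{\phi}\|_{L^{2}(\Omega)}^{2}$. Throughout I abbreviate $r_{1}=\im k\boldsymbol{\phi}+\nabla u$, $r_{2}=\im k u+\nabla\cdot\boldsymbol{\phi}$, $r_{3}=\boldsymbol{\phi}\cdot\normal+u|_{\partial\Omega}$, so that $b((\boldsymbol{\phi},u),(\boldsymbol{\phi},u))=\|r_{1}\|_{L^{2}(\Omega)}^{2}+\|r_{2}\|_{L^{2}(\Omega)}^{2}+k\|r_{3}\|_{L^{2}(\partial\Omega)}^{2}$. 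The boundary piece $k\|r_{3}\|_{L^{2}(\partial\Omega)}^{2}$ on the left-hand side is trivially controlled by $b$, so the real work is to bound $\|\boldsymbol{\phi}\|_{L^{2}(\Omega)}^{2}+\|u\|_{L^{2}(\Omega)}^{2}$ by $Cb$.

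Integrating by parts against $\varphi\in H^{1}(\Omega)$, and substituting $\nabla\cdot\boldsymbol{\phi}=r_{2}-\im k u$ and $\boldsymbol{\phi}\cdot\normal=r_{3}-u$, shows that $u$ satisfies the weak Helmholtz-type identity
\begin{equation*}
(\nabla u,\nabla\varphi)_{\Omega}-k^{2}(u,\varphi)_{\Omega}-\im k\langle u,\varphi\rangle_{\partial\Omega}=(r_{1},\nabla\varphi)_{\Omega}+\im k(r_{2},\varphi)_{\Omega}-\im k\langle r_{3},\varphi\rangle_{\partial\Omega}.
\end{equation*}
To bound $\|u\|_{L^{2}(\Omega)}$, let $w$ solve the dual problem $-\Delta w-k^{2}w=u$ in $\Omega$, $\partial_{\normal}w+\im k w=0$ on $\partial\Omega$; the sign-flipped version of (A1), which holds by complex conjugation, yields $\|\nabla w\|_{L^{2}(\Omega)}+k\|w\|_{L^{2}(\Omega)}\leq C\|u\|_{L^{2}(\Omega)}$, and the multiplicative trace inequality then gives $\|w\|_{L^{2}(\partial\Omega)}\leq C\|u\|_{L^{2}(\Omega)}/\sqrt{k}$. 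Testing the identity with $\varphi=w$ and applying Green's formula on the left causes every term except $\|u\|_{L^{2}(\Omega)}^{2}$ to cancel, so Cauchy--Schwarz produces $\|u\|_{L^{2}(\Omega)}\leq C(\|r_{1}\|+\|r_{2}\|+\sqrt{k}\|r_{3}\|_{L^{2}(\partial\Omega)})$, hence $\|u\|_{L^{2}(\Omega)}^{2}\leq Cb$.

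The same template produces the boundary bound: let $w$ solve $-\Delta w-k^{2}w=0$ in $\Omega$, $\partial_{\normal}w+\im k w=u$ on $\partial\Omega$, for which (A1) gives $\|\nabla w\|+k\|w\|\leq C\|u\|_{L^{2}(\partial\Omega)}$ and $\|w\|_{L^{2}(\partial\Omega)}\leq C\|u\|_{L^{2}(\partial\Omega)}/\sqrt{k}$. Testing the weak identity with this $w$ collapses the left-hand side to $\|u\|_{L^{2}(\partial\Omega)}^{2}$, and the same Cauchy--Schwarz step yields $\|u\|_{L^{2}(\partial\Omega)}^{2}\leq Cb$.

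It remains to control $\|\boldsymbol{\phi}\|_{L^{2}(\Omega)}$. Expanding $\|r_{1}\|^{2}=k^{2}\|\boldsymbol{\phi}\|^{2}+\|\nabla u\|^{2}-2k\,\Im(\boldsymbol{\phi},\nabla u)_{\Omega}$ and integrating by parts in the cross term---again using $\nabla\cdot\boldsymbol{\phi}=r_{2}-\im k u$ and $\boldsymbol{\phi}\cdot\normal=r_{3}-u$---produces the algebraic identity
\begin{equation*}
k^{2}\|\boldsymbol{\phi}\|_{L^{2}(\Omega)}^{2}+\|\nabla u\|_{L^{2}(\Omega)}^{2}=\|r_{1}\|_{L^{2}(\Omega)}^{2}+2k^{2}\|u\|_{L^{2}(\Omega)}^{2}-2k\,\Im(r_{2},u)_{\Omega}+2k\,\Im\langle r_{3},u\rangle_{\partial\Omega}.
\end{equation*}
Bounding the two imaginary parts by Young's inequality and inserting the two duality estimates yields $k^{2}\|\boldsymbol{\phi}\|^{2}\leq b+3k^{2}\|u\|_{L^{2}(\Omega)}^{2}+k\|u\|_{L^{2}(\partial\Omega)}^{2}\leq Cb(1+k+k^{2})$, so that $\|\boldsymbol{\phi}\|_{L^{2}(\Omega)}^{2}\leq Cb$ with a constant depending on $k_{0}$ but independent of $k\geq k_{0}$. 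The subtle point, and the reason the boundary duality is required in addition to the interior one, is the unfavorable sign of the $+2k^{2}\|u\|^{2}$ contribution: without a~priori $L^{2}$ bounds on $u$ both in $\Omega$ and on $\partial\Omega$, this term would cost a factor $k^{2}$ and destroy stability.
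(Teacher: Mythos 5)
Your argument is correct, but it follows a genuinely different route from the paper's. The paper sets $\boldsymbol{\eta}=\im k\boldsymbol{\phi}+\nabla u$, $w=\im k u+\nabla\cdot\boldsymbol{\phi}$, $\mu=\boldsymbol{\phi}\cdot\normal+u$ and splits the first order system into two subproblems: one with data $(0,w,\mu)$, whose solution is gradient-like ($\im k\boldsymbol{\phi}_1+\nabla u_1=0$) and is produced by solving a single second order Helmholtz problem via (A1), and one with data $(\boldsymbol{\eta},0,0)$, whose well-posedness and $L^2$ bound are imported from Lemma~4.3 of Demkowicz--Gopalakrishnan--Muga--Zitelli; uniqueness from (A1) identifies the sum with $(\boldsymbol{\phi},u)$ and the triangle inequality finishes. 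You instead keep $(\boldsymbol{\phi},u)$ intact: two Aubin--Nitsche-type duality arguments applied to the weak Helmholtz identity satisfied by $u$ give $\Vert u\Vert_{L^{2}(\Omega)}^{2}\leq Cb$ and $\Vert u\Vert_{L^{2}(\partial\Omega)}^{2}\leq Cb$ (the sign-flipped form of (A1) you invoke is indeed legitimate by complex conjugation), and the Rellich-type expansion of $\Vert \im k\boldsymbol{\phi}+\nabla u\Vert_{L^{2}(\Omega)}^{2}$ then recovers $k^{2}\Vert\boldsymbol{\phi}\Vert_{L^{2}(\Omega)}^{2}$, with the $k$-uniformity surviving because the troublesome $+2k^{2}\Vert u\Vert_{L^{2}(\Omega)}^{2}$ term is paid for by the duality estimates. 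Your version is self-contained modulo (A1) and avoids citing the external lemma for the system with an inhomogeneous gradient equation, at the cost of three separate steps; the paper's decomposition is shorter and hands over all the analytic work to known solvability results. A minor remark: your second (boundary) duality step could be bypassed by applying the multiplicative trace inequality to $u$ itself and absorbing a fraction of $\Vert\nabla u\Vert_{L^{2}(\Omega)}^{2}$ into the left-hand side of your identity, but the argument as written is complete.
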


\begin{theorem} (Quasi optimal convergence)
\label{main_res}
\label{thm_duality}
We assume that the Assumptions (A1, A2) hold. $(\boldsymbol{\phi}_{h}, u_{h})$ is the solution of 
the FOSLS method (\ref{fosls_hwn_pde}).
There are constants $c_{1}, c_{2}, C>0$ independent of $h,p$ and $k\geq k_{0}>0$ such that if 
\begin{align}
\label{p_condition}
\dfrac{kh}{p} < c_{1} \text{ together with } p\geq c_{2} (\log k + 1),
\end{align}
then for any $(\boldsymbol{\psi}_{h}, v_{h})\in \boldsymbol{V}_{h}\times W_{h}$, we have 
\begin{align*}
& \Vert u-u_{h} \Vert_{L^{2}(\Omega)} \\
\leq & C h \left( k\Vert \boldsymbol{\phi} - \boldsymbol{\psi}_{h}\Vert_{L^{2}(\Omega)} 
+ \Vert\nabla\cdot (\boldsymbol{\phi} - \boldsymbol{\psi}_{h}) \Vert_{L^{2}(\Omega)} 
+ \Vert \nabla (u - v_{h})\Vert_{L^{2}(\Omega)} 
+ k \Vert u - v_{h}\Vert_{L^{2}(\Omega)} \right)\\
& \quad + C h^{1/2} \Vert (\boldsymbol{\phi} - \boldsymbol{\psi}_{h})\cdot\normal\Vert_{L^{2}(\partial\Omega)}.
\end{align*}
\end{theorem}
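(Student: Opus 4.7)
The plan is a duality (Aubin--Nitsche type) argument tailored to FOSLS, along the lines sketched in the introduction. First I would introduce the dual problem: find $(\boldsymbol{\psi},v)\in\boldsymbol{V}\times W$ satisfying
\begin{equation*}
b((\boldsymbol{\phi}',u'),(\boldsymbol{\psi},v)) \;=\; (u',\,u-u_h)_\Omega \qquad \forall (\boldsymbol{\phi}',u')\in \boldsymbol{V}\times W.
\end{equation*}
Taking $(\boldsymbol{\phi}',u')=(\boldsymbol{\phi}-\boldsymbol{\phi}_h,u-u_h)$ gives the identity
\begin{equation*}
\|u-u_h\|_{L^2(\Omega)}^{2} = b((\boldsymbol{\phi}-\boldsymbol{\phi}_h,u-u_h),(\boldsymbol{\psi},v)).
\end{equation*}
Since the FOSLS scheme \eqref{fosls_hwn} is derived by minimization over the conforming space $\boldsymbol{V}_h\times W_h$, Galerkin orthogonality yields $b((\boldsymbol{\phi}-\boldsymbol{\phi}_h,u-u_h),(\boldsymbol{\psi}_h,v_h))=0$ for every $(\boldsymbol{\psi}_h,v_h)\in \boldsymbol{V}_h\times W_h$, so the above right-hand side may be replaced by $b((\boldsymbol{\phi}-\boldsymbol{\phi}_h,u-u_h),(\boldsymbol{\psi}-\boldsymbol{\psi}_h,v-v_h))$ for any discrete pair. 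A Cauchy--Schwarz bound, term by term on the three inner products making up $b$, then estimates $\|u-u_h\|_{L^2(\Omega)}^{2}$ by a product of the primal residual seminorms and the corresponding dual approximation seminorms.

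Next I would analyze the dual PDE. Integrating by parts in the defining identity shows that $(\boldsymbol{\psi},v)$ is governed by a Helmholtz-type system adjoint to the FOSLS first-order form: essentially $v$ solves $-\Delta v-k^{2}v = u-u_h$ with a Robin-type boundary condition arising from the $k$-weighted boundary term in $b$, while $\boldsymbol{\psi}$ is coupled to $\nabla v$ and $\nabla\cdot\boldsymbol{\psi}$ is coupled to $v$ through the first-order identities built into $b$. Using assumptions (A1) and (A2), I would invoke the solution splitting (the analogue of Lemma~\ref{lemma_test_function}, which the authors announce as (\ref{test_func_ineq5})) to decompose
\begin{equation*}
(\boldsymbol{\psi},v) \;=\; (\boldsymbol{\psi}_A,v_A) + (\boldsymbol{\psi}_{H^{2}},v_{H^{2}}),
\end{equation*}
with $(\boldsymbol{\psi}_A,v_A)$ an analytic, $k$-oscillatory piece with the usual Melenk--Sauter analyticity estimates, and $(\boldsymbol{\psi}_{H^{2}},v_{H^{2}})$ a nonoscillatory remainder with $H^{2}$-type bounds, in particular the key control of $\|\nabla\cdot\boldsymbol{\psi}_{H^{2}}\|_{H^{1}(\Omega)}$ by $\|u-u_h\|_{L^{2}(\Omega)}$, each with explicit polynomial growth in $k$.

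I would then pick the best discrete approximation $(\boldsymbol{\psi}_h,v_h)$ componentwise: for $(\boldsymbol{\psi}_{H^{2}},v_{H^{2}})$ use the standard Raviart--Thomas interpolant of Section~4 for $\boldsymbol{\psi}_{H^{2}}$ (exploiting the new $H(\text{div})$ projection to control the normal-trace boundary term with the stated $h^{1/2}$ weight) and the $H^{1}$-conforming interpolant for $v_{H^{2}}$, yielding an $O(h)$ gain; for $(\boldsymbol{\psi}_A,v_A)$ use the analytic-class approximation estimates with explicit $(k,h,p)$ dependence, where the smallness of $kh/p$ and $p\geq c_{2}(\log k+1)$ force the $k$-exponential factors to be absorbed into a harmless constant. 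Plugging these bounds into the Cauchy--Schwarz decomposition produces exactly the weighted sum $h\bigl(k\|\boldsymbol{\phi}-\boldsymbol{\psi}_h\|_{L^{2}}+\|\nabla\!\cdot\!(\boldsymbol{\phi}-\boldsymbol{\psi}_h)\|_{L^{2}}+\|\nabla(u-v_h)\|_{L^{2}}+k\|u-v_h\|_{L^{2}}\bigr)+h^{1/2}\|(\boldsymbol{\phi}-\boldsymbol{\psi}_h)\!\cdot\!\boldsymbol{n}\|_{L^{2}(\partial\Omega)}$, multiplied by a factor controlled by $\|u-u_h\|_{L^{2}(\Omega)}$; dividing through gives the claimed estimate. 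Along the way, Theorem~\ref{thm_stability} applied to $(\boldsymbol{\phi}-\boldsymbol{\phi}_h,u-u_h)$ (together with Galerkin orthogonality) bounds the primal residual seminorms by those of an arbitrary $(\boldsymbol{\psi}_h,v_h)$, so the right-hand side is genuinely in terms of the best-approximation errors.

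The main obstacle is the regularity step: the dual FOSLS problem is not the customary adjoint Helmholtz problem used in \cite{MelenkSIAM11,MelenkJSC13}, because $(\boldsymbol{\psi},v)$ lives in $\boldsymbol{V}\times W$ and is coupled through all three squared residuals (interior gradient, interior divergence, and $k$-weighted boundary trace). Rerunning the Melenk--Sauter analytic/elliptic iteration on this coupled system is what produces the nonstandard estimate on $\|\nabla\!\cdot\!\boldsymbol{\psi}_{H^{2}}\|_{H^{1}(\Omega)}$, and getting this bound with the correct power of $k$ (so that the final constants are genuinely $k$-independent under $p\gtrsim \log k$) is the delicate point on which the whole quasi-optimality hinges; the companion ingredient is the tailored $H(\text{div})$ projection of Section~4, needed because the projection-based interpolation of \cite{Demko:2008:PBI} fails to deliver the required $(h,p,k)$-tracking (cf.\ Remark~\ref{remark_div_space}).
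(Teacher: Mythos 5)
Your proposal follows essentially the same route as the paper: the same dual FOSLS problem (which the paper realizes constructively through an auxiliary adjoint Helmholtz solution $z$ rather than variationally, but the resulting $(\boldsymbol{\psi},v)$ and the identity $\Vert u-u_h\Vert_{L^2(\Omega)}^2=b((\boldsymbol{\phi}-\boldsymbol{\phi}_h,u-u_h),(\boldsymbol{\psi},v))$ coincide), the decomposition and regularity bounds of Lemma~\ref{lemma_test_function}, Galerkin orthogonality, the componentwise choice of discrete approximants, and the final stability-plus-orthogonality step. One small misattribution: the new $H(\mathrm{div})$ projection $\Pi_{\boldsymbol{V}}$ is applied to the analytic part $\boldsymbol{\psi}_A$ (Lemma~\ref{lemma_proj_div_domain}), while $\boldsymbol{\psi}_{H^2}$ is handled by the lowest-order Raviart--Thomas interpolant of Lemma~\ref{lemma_div_h2}, not the other way around as your parenthetical suggests.
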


\section{Stability}
We give the proof of the stability estimate (Theorem~\ref{thm_stability}) for 
the solution of the FOSLS method in the above section.

\begin{proof} (Theorem~\ref{thm_stability})
We define
\begin{align*}
\boldsymbol{\eta} & = \im k \boldsymbol{\phi}+\nabla u \quad 
w = \im k u + \nabla\cdot \boldsymbol{\phi}\text{ in }\Omega,\qquad 
\mu = \boldsymbol{\phi}\cdot\normal + u \text{ on }\partial\Omega.
\end{align*}
We consider two problems 
\begin{align}
\label{stab_prob1}
\im k \boldsymbol{\phi}_{1} + \nabla u_{1} & = 0 \quad \text{ in } \Omega,\\
\nonumber
\im k u_{1} + \nabla\cdot\boldsymbol{\phi}_{1} & = w \quad \text{ in } \Omega,\\
\nonumber
\boldsymbol{\phi}_{1}\cdot\normal + u_{1} & = \mu \quad \text{ on } \partial\Omega,
\end{align}
and 
\begin{align}
\label{stab_prob2}
\im k \boldsymbol{\phi}_{2} + \nabla u_{2} & = \boldsymbol{\eta} \quad \text{ in } \Omega,\\
\nonumber
\im k u_{2} + \nabla\cdot\boldsymbol{\phi}_{2} & = 0 \quad \text{ in } \Omega,\\
\nonumber
\boldsymbol{\phi}_{2}\cdot\normal + u_{2} & = 0 \quad \text{ on } \partial\Omega.
\end{align}

According to the assumption (A1), there is a unique solution $u_{1}\in H^{1}(\Omega)$ of the following problem 
\begin{align*}
& -\Delta u_{1} - k^{2} u_{1} = \im k w \quad \text{ in } \Omega,\\
& \dfrac{\partial u_{1}}{\partial \normal} - \im k u_{1} = -\im k \mu \quad \text{ on } \partial\Omega,
\end{align*}
and 
\begin{align*}
\Vert \nabla u_{1}\Vert_{L^{2}(\Omega)} + k\Vert u_{1}\Vert_{L^{2}(\Omega)} \leq 
C k \left(\Vert w\Vert_{L^{2}(\Omega)} + \Vert \mu\Vert_{L^{2}(\partial\Omega)} \right).
\end{align*}
We define $\boldsymbol{\phi}_{1}$ by $\im k \boldsymbol{\phi}_{1} + \nabla u_{1} = 0$ in $\Omega$. 
Then, we have that $(\boldsymbol{\phi}_{1}, u_{1})$ is a solution of the problem (\ref{stab_prob1}) such that 
$\boldsymbol{\phi}_{1} \in \{\psi\in H(\text{div}, \Omega): \psi\cdot\normal|_{\partial\Omega} \in L^{2}(\partial\Omega)\}$  
and 
\begin{align}
\label{stab_ineq1}
\Vert \boldsymbol{\phi}_{1}\Vert_{L^{2}(\Omega)} + k^{-1}\Vert \nabla u_{1}\Vert_{L^{2}(\Omega)} 
+\Vert u_{1}\Vert_{L^{2}(\Omega)} \leq C\left(\Vert w\Vert_{L^{2}(\Omega)} 
+ \Vert \mu\Vert_{L^{2}(\partial\Omega)}\right).
\end{align}

According to \cite[Lemma~$4.3$]{DemkoGopalMugaZitelli2012} and the assumption (A1) again, 
there is a solution $(\boldsymbol{\phi}_{2}, u_{2})\in H(\text{div}, \Omega)\times H^{1}(\Omega)$  
of the problem (\ref{stab_prob2}) satisfying 
\begin{align}
\label{stab_ineq2}
\Vert \boldsymbol{\phi}_{2}\Vert_{L^{2}(\Omega)} + k^{-1}\Vert \nabla u_{2}\Vert_{L^{2}(\Omega)} 
+\Vert u_{2}\Vert_{L^{2}(\Omega)} \leq C\Vert \boldsymbol{\eta}\Vert_{L^{2}(\Omega)}.
\end{align}

It is easy to see that 
\begin{align*}
\im k (\boldsymbol{\phi}_{1}+\boldsymbol{\phi}_{2}) + \nabla (u_{1}+u_{2}) & = \boldsymbol{\eta} \quad \text{ in } \Omega,\\
\im k (u_{1}+u_{2}) + \nabla\cdot(\boldsymbol{\phi}_{1}+\boldsymbol{\phi}_{2}) & = w \quad \text{ in } \Omega,\\
(\boldsymbol{\phi}_{1}+\boldsymbol{\phi}_{2})\cdot\normal + (u_{1}+u_{2}) & = \mu \quad \text{ on } \partial\Omega.
\end{align*}
By the assumption (A1), $u_{1}+u_{2} = u$ and $\boldsymbol{\phi}_{1}+\boldsymbol{\phi}_{2}=\boldsymbol{\phi}$. 
Then, by (\ref{stab_ineq1}) and (\ref{stab_ineq2}), we can conclude that the proof is complete.
\end{proof}

\begin{remark}
\label{remark_stability}
By the same argument in the above proof, for any $(\boldsymbol{\phi}, u)\in \boldsymbol{V}\times W$,
\begin{align*}
&  C_{0}\left(\Vert \boldsymbol{\phi}\Vert_{L^{2}(\Omega)}^{2} + \Vert u\Vert_{L^{2}(\Omega)}^{2} 
+ \Vert \boldsymbol{\phi}\cdot\normal + u\Vert_{L^{2}(\partial\Omega)}^{2}\right)\\
\leq & (\im k \boldsymbol{\phi}+\nabla u, \im k \boldsymbol{\phi}+\nabla u)_{\Omega}
+ (\im k u + \nabla\cdot \boldsymbol{\phi}, \im k u + \nabla\cdot \boldsymbol{\phi})_{\Omega}
+ \langle \boldsymbol{\phi}\cdot\normal + u, \boldsymbol{\phi}\cdot\normal + u \rangle_{\partial\Omega}. 
\end{align*}
\end{remark}

\section{Auxiliary results}
In this section, we provide some auxiliary results. 

\subsection{Decomposition of the Helmholtz  solution}
\label{section_decomp}
The main results of this section are Theorem~\ref{thm_divergence_regularity} and Lemma~\ref{lemma_analytic}. 
Theorem~\ref{thm_divergence_regularity} is the same as \cite[Theorem~$4.10$]{MelenkSIAM11} except 
(\ref{decomp_ineq3}). We emphasize that (\ref{decomp_ineq3}) is essential in the proof of duality argument 
(cf. (\ref{test_func_ineq5}) in Lemma~\ref{lemma_test_function}) of the first order system least squares method 
for the Helmholtz equation. We require the Assumption (A2) holds throughout section~\ref{section_decomp}.

We need to recall several notations in \cite{MelenkSIAM11}. We denote by $\mathcal{F}$ the Fourier transform 
for functions in $L^{2}(\mathbb{R}^{d})$. For functions $f\in L^{2}(\mathbb{R}^{d})$, the high frequency filter 
$H_{\mathbb{R}^{d}}$ and the low frequency filter $L_{\mathbb{R}^{d}}$ are defined by
\begin{align}
\label{global_filters}
\mathcal{F} (L_{\mathbb{R}^{d}} f) = \chi_{\eta k} \mathcal{F}(f), \quad
\mathcal{F} (H_{\mathbb{R}^{d}} f) = (1-\chi_{\eta k}) \mathcal{F}(f),
\end{align}
where $\chi_{\eta k}$ is the characteristic function of the ball $B_{\eta k}(0)$ and $\eta$ is a positive parameter 
which will be determined later. Let $E_{\Omega}:L^{2}(\Omega)\rightarrow L^{2}(\mathbb{R}^{d})$ be 
the Stein extension operator \cite[Chapter VI]{Stein70}. Then, for $f\in L^{2}(\Omega)$, we define 
\begin{align}
\label{domain_filters}
L_{\Omega} f = (L_{\mathbb{R}^{d}} (E_{\Omega} f))|_{\Omega}, \quad 
H_{\Omega} f = (H_{\mathbb{R}^{d}} (E_{\Omega} f))|_{\Omega}.
\end{align}
We denote by $G^{N}$ a lifting operator with the mapping property 
$G^{N}: H^{s}(\partial\Omega)\rightarrow H^{3/2+s}(\Omega)$ for any $s>0$ and $\partial_{\normal}G^{N} g = g$. 
As mentioned in \cite[Remark~$4.1$]{MelenkSIAM11}, we can choose $G^{N}$ independent of $k$. 
We then define $H_{\partial\Omega}^{N}$ and $L_{\partial\Omega}^{N}$ by
\begin{align}
\label{boundary_filters}
H_{\partial\Omega}^{N}(g) = \partial_{\normal}H_{\Omega}(G^{N}(g)),\quad 
L_{\partial\Omega}^{N}(g) = \partial_{\normal}L_{\Omega}(G^{N}(g)).
\end{align}
We denote by $N_{k}$ the Newton potential operator defined in \cite[$(4.11)$]{MelenkSIAM11}. 
We define $S_{k}: (f, g)\rightarrow u$ to be the solution operator of the Helmholtz equation (\ref{hwn_pde}),
and $S_{k}^{\Delta}: g \rightarrow u$ to be the solution operator of the modified Helmholtz equation with 
Robin boundary conditions; i.e., $u=S_{k}^{\Delta}(g)$ solves
\begin{align}
\label{k_poisson}
-\Delta u + k^{2} u = 0\text{ in }\Omega, \quad \partial_{\normal}u-\im k u = g \text{ on }\partial\Omega.
\end{align}

\begin{lemma}
\label{lemma_decomp_source_term}
We assume that the Assumptions (A1, A2) hold. Let $q\in (0,1)$. Then, there are constants 
$C,\gamma>0$ independent of $k$ such that for any $f\in L^{2}(\Omega)$, the function $u=S_{k}(f,0)$ 
can be written as $u = u_{A} + u_{H^{2}}+\tilde{u}$, where
\begin{align*}
& k\Vert u_{A} \Vert_{L^{2}(\Omega)} + \Vert \nabla u_{A} \Vert_{L^{2}(\Omega)} \leq C \Vert f\Vert_{L^{2}(\Omega)},\\
& \Vert \nabla^{p+2} u_{A}\Vert_{L^{2}(\Omega)} \leq Ck^{-1}\gamma^{p} \max (p+2, k)^{p+2}\Vert f\Vert_{L^{2}(\Omega)}
\quad \forall p\geq 0,\\ 
& \Vert \Delta u_{A} + k^{2} u_{A}\Vert_{H^{1}(\Omega)} \leq C k \Vert f\Vert_{L^{2}(\Omega)},\\
& k\Vert u_{H^{2}}\Vert_{L^{2}(\Omega)} + \Vert \nabla u_{H^{2}}\Vert_{L^{2}(\Omega)} \leq q k^{-1} \Vert f\Vert_{L^{2}(\Omega)},\\
& \Vert u_{H^{2}}\Vert_{H^{2}(\Omega)} \leq C \Vert f\Vert_{L^{2}(\Omega)},
\end{align*}
and the remainder $\tilde{u} = S_{k}(\tilde{f},0)$ satisfies
\begin{align*}
-\Delta \tilde{u} - k^{2}\tilde{u} = \tilde{f},\quad \partial_{\normal}\tilde{u} - \im k \tilde{u}|_{\partial\Omega} = 0, 
\end{align*}
where
\begin{align*}
\Vert \tilde{f}\Vert_{L^{2}(\Omega)} \leq q\Vert f\Vert_{L^{2}(\Omega)}.
\end{align*}
\end{lemma}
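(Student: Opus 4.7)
The plan is to follow the frequency-splitting strategy of Melenk and Sauter in \cite{MelenkSIAM11}, since the statement is close to their source-term decomposition lemma. The starting point is to split $f = L_\Omega f + H_\Omega f$ using the filters from (\ref{domain_filters}). The low-frequency data $L_\Omega f$ will drive an analytic piece $u_A$, the high-frequency data $H_\Omega f$ will drive an $H^2$ piece $u_{H^2}$, and the remainder $\tilde u := u - u_A - u_{H^2}$ will satisfy a Helmholtz problem whose right-hand side is made small by choosing the filter parameter $\eta$ large enough.

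For the analytic part, I would take $u_A$ to solve $-\Delta u_A - k^2 u_A = L_\Omega f$ in $\Omega$ with a suitable Robin-type boundary condition, constructed as $N_k(L_{\mathbb{R}^d}(E_\Omega f))|_\Omega$ plus a boundary corrector obtained from $S_k^\Delta$. Because $L_{\mathbb{R}^d}(E_\Omega f)$ has Fourier support in $B_{\eta k}(0)$, its Newton potential extends to an entire function on $\mathbb{R}^d$ with derivative bounds of the form $\|\nabla^n (\cdot)\|_{L^2(\mathbb{R}^d)} \leq C k^{-1} (\eta k)^n \|f\|_{L^2}$ for $n \leq k$, and Gevrey-type bounds $C k^{-1}\gamma^n n!$ for $n > k$; combining these yields the stated estimate $\|\nabla^{p+2} u_A\|_{L^2(\Omega)} \leq C k^{-1}\gamma^p \max(p+2,k)^{p+2}\|f\|_{L^2}$. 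The $H^1$ bound follows from Plancherel applied to the Newton potential together with $k$-explicit stability of the boundary corrector $S_k^\Delta$, and the residual bound $\|\Delta u_A + k^2 u_A\|_{H^1(\Omega)} \leq C k\|f\|_{L^2}$ reduces to $\|L_\Omega f\|_{H^1(\Omega)} \leq C k \|f\|_{L^2}$, which is immediate from the frequency cutoff $|\xi| \leq \eta k$ together with $L^2$-boundedness of the Stein extension.

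For the $H^2$ part, I would set $u_{H^2}$ to be, up to a $k$-uniformly bounded elliptic correction, the restriction to $\Omega$ of $(-\Delta + k^2)^{-1}(H_{\mathbb{R}^d}(E_\Omega f))$, so that $-\Delta u_{H^2} + k^2 u_{H^2} = H_\Omega f$. The Fourier-side estimate $1/(|\xi|^2 + k^2) \leq ((1+\eta^2)k^2)^{-1}$ on $\{|\xi| > \eta k\}$ delivers both $k\|u_{H^2}\|_{L^2} + \|\nabla u_{H^2}\|_{L^2} \leq C(1+\eta^2)^{-1/2}k^{-1}\|f\|_{L^2}$, which is at most $qk^{-1}\|f\|_{L^2}$ once $\eta$ is large, and $\|u_{H^2}\|_{H^2(\Omega)} \leq C\|f\|_{L^2}$ by standard elliptic regularity for the positive-sign operator. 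A direct computation then gives $-\Delta \tilde u - k^2 \tilde u = f - L_\Omega f - (-\Delta u_{H^2} - k^2 u_{H^2}) = f - L_\Omega f - H_\Omega f + 2k^2 u_{H^2} = 2k^2 u_{H^2}$, so $\tilde f = 2k^2 u_{H^2}$ satisfies $\|\tilde f\|_{L^2} \leq C(1+\eta^2)^{-1}\|f\|_{L^2} \leq q\|f\|_{L^2}$ for $\eta$ sufficiently large. The homogeneous Robin condition for $\tilde u$ is arranged by absorbing all boundary defects from $u_A$ and $u_{H^2}$ into their respective $S_k^\Delta$ correctors.

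The \textbf{main obstacle} is preserving the Gevrey-type constants $\gamma^p$ and the factor $\max(p+2,k)^{p+2}$ through the $S_k^\Delta$ boundary corrector for $u_A$: the boundary trace of $N_k(L_{\mathbb{R}^d}(E_\Omega f))$ inherits the bandwidth $\eta k$, but propagating it into $\Omega$ by a Robin solve while retaining the explicit $p$-dependence requires analytic elliptic regularity on an analytic boundary, which is precisely where Assumption (A2) enters. All of these estimates are carried out in \cite{MelenkSIAM11}; here one mainly has to thread the notation of the current paper through them and verify that the new $H^1$-norm bound on $\Delta u_A + k^2 u_A$ comes out of the Plancherel argument above.
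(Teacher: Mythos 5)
Your overall strategy --- split $f=L_\Omega f+H_\Omega f$, let the low frequencies drive an analytic part, the high frequencies an $H^2$ part, and make the remainder small by taking $\eta$ large --- is the same as the paper's, which simply invokes \cite[Lemma~4.15]{MelenkSIAM11} for every estimate except the third one and then verifies that one by inspecting the construction. You also correctly isolate the only genuinely new computation, namely $\Vert L_\Omega f\Vert_{H^1(\Omega)}\le Ck\Vert f\Vert_{L^2(\Omega)}$ via Plancherel on the ball $\{\vert\xi\vert\le\eta k\}$.

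However, the concrete construction you describe deviates from the one in \cite{MelenkSIAM11} in two places where it would fail. First, you build $u_A$ from $N_k(L_{\mathbb{R}^d}(E_\Omega f))$, i.e.\ you apply the Newton potential to the \emph{low}-frequency data. The multiplier of $N_k$ is $1/(\vert\xi\vert^2-k^2)$, singular on the sphere $\vert\xi\vert=k$, and that sphere lies inside the support $B_{\eta k}(0)$ of the low-frequency data (one needs $\eta>1$), so the bounds you claim, $\Vert\nabla^n(\cdot)\Vert_{L^2}\le Ck^{-1}(\eta k)^n\Vert f\Vert_{L^2}$, are not obtainable by Plancherel. This is precisely why Melenk and Sauter apply $N_k$ only to the \emph{high}-frequency part (where the symbol is bounded by $C(\eta^2-1)^{-1}k^{-2}$) and send the low-frequency source through the exact solution operator, $u_A^I=S_k(L_\Omega f,0)$, whose Gevrey derivative bounds are their Lemma~4.13 and are where (A1) and (A2) really enter. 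Second, you place an $S_k^\Delta$ boundary corrector inside $u_A$: $S_k^\Delta$ of rough boundary data is only $H^2$, which destroys the $\nabla^{p+2}u_A$ estimates, and since it solves $-\Delta v+k^2v=0$ it contributes an unaccounted term $2k^2v$ to $\Delta u_A+k^2u_A$, so your reduction of the third estimate to $\Vert L_\Omega f\Vert_{H^1}$ no longer holds. The correct bookkeeping splits the boundary defect with the filters $L^N_{\partial\Omega},H^N_{\partial\Omega}$ of (\ref{boundary_filters}): the low-frequency part goes into $u_A$ as $u_A^{II}=S_k(0,\cdot)$, which is analytic and satisfies $\Delta u_A^{II}+k^2u_A^{II}=0$ identically (this is exactly what makes the paper's proof of the new estimate a one-line Plancherel computation), while the high-frequency part goes into $u_{H^2}$ via $S_k^\Delta$ and generates the volume residual $\tilde f$. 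Your closing paragraph correctly flags the boundary corrector as the hard point, but assigns it the wrong operator and the wrong summand of the decomposition.
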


\begin{proof}
According to Theorem~\ref{thm_stability} and \cite[Lemma~$4.15$]{MelenkSIAM11}, it is easy to see that 
except $\Vert \Delta u_{A} + k^{2} u_{A}\Vert_{H^{1}(\Omega)} \leq C k \Vert f\Vert_{L^{2}(\Omega)}$, all other statements hold.

In order to prove  $\Vert \Delta u_{A} + k^{2} u_{A}\Vert_{H^{1}(\Omega)} \leq C k \Vert f\Vert_{L^{2}(\Omega)}$, 
we need to go through the proof of \cite[Lemma~$4.15$]{MelenkSIAM11}. It is shown that 
\begin{align*}
u_{A} = u_{A}^{I} + u_{A}^{II},\quad u_{A}^{I} = S_{k}(L_{\Omega}f, 0),\quad 
u_{H^{2}}^{I} = N_{k}(H_{\Omega}f),\quad u_{A}^{II} = S_{k}(0, L_{\partial\Omega}^{N}
(\im k u_{H^{2}}^{I}-\partial_{\normal}u_{H^{2}}^{I})). 
\end{align*}
So, it is sufficient to show that
\begin{align*}
\Vert \Delta u_{A}^{I} + k^{2} u_{A}^{I}\Vert_{H^{1}(\Omega)} \leq C k \Vert f\Vert_{L^{2}(\Omega)},\quad 
\Vert \Delta u_{A}^{II} + k^{2} u_{A}^{II}\Vert_{H^{1}(\Omega)} \leq C k \Vert f\Vert_{L^{2}(\Omega)}.
\end{align*}

Since $u_{A}^{I} = S_{k}(L_{\Omega}f, 0)$, we have 
\begin{align*}
& \Vert \Delta u_{A}^{I} + k^{2} u_{A}^{I}\Vert_{H^{1}(\Omega)}^{2} = \Vert L_{\Omega}f\Vert_{H^{1}(\Omega)}^{2}
\leq C \int_{\mathbb{R}^{d}} (1+\vert \xi\vert^{2} )\vert \widehat{L_{\Omega}f}\vert^{2}\\
= & C \int_{\mathbb{R}^{d}} (1+\vert \xi\vert^{2} )\chi_{\eta k}\vert \widehat{E_{\Omega}f}\vert^{2}
\leq C k^{2} \Vert E_{\Omega} f\Vert_{L^{2}(\Omega)}^{2} \leq C k^{2} \Vert f\Vert_{L^{2}(\Omega)}^{2}.
\end{align*}
On the other hand, $ u_{A}^{II} = S_{k}(0, L_{\partial\Omega}^{N}
(\im k u_{H^{2}}^{I}-\partial_{\normal}u_{H^{2}}^{I}))$ implies that 
$\Delta u_{A}^{II} + k^{2} u_{A}^{II}=0$. So, we can conclude that 
$\Vert \Delta u_{A} + k^{2} u_{A}\Vert_{H^{1}(\Omega)} \leq C k \Vert f\Vert_{L^{2}(\Omega)}$.
\end{proof}

\begin{lemma}
\label{lemma_decomp_boundary_data}
We assume that the Assumptions (A1, A2) hold. Let $q\in (0,1)$. Then, there are constants 
$C,\gamma>0$ independent of $k$ such that for any $g\in H^{1/2}(\partial\Omega)$, the function $u=S_{k}(0,g)$ 
can be written as $u = u_{A} + u_{H^{2}} + \tilde{u}$, where
\begin{align*}
& k\Vert u_{A} \Vert_{L^{2}(\Omega)} + \Vert \nabla u_{A} \Vert_{L^{2}(\Omega)} \leq C \Vert g\Vert_{H^{1/2}(\partial\Omega)},\\
& \Vert \nabla^{p+2} u_{A}\Vert_{L^{2}(\Omega)} \leq Ck^{-1}\gamma^{p} \max (p+2, k)^{p+2}
\Vert g\Vert_{H^{1/2}(\partial\Omega)}\quad \forall p\geq 0,\\ 
& \Vert \Delta u_{A} + k^{2} u_{A}\Vert_{H^{1}(\Omega)} \leq C k \Vert g\Vert_{H^{1/2}(\partial\Omega)},\\
& k\Vert u_{H^{2}}\Vert_{L^{2}(\Omega)} + \Vert \nabla u_{H^{2}}\Vert_{L^{2}(\Omega)} \leq q k^{-1} 
\Vert g\Vert_{H^{1/2}(\partial\Omega)},\\
& \Vert u_{H^{2}}\Vert_{H^{2}(\Omega)} \leq C \Vert g\Vert_{H^{1/2}(\partial\Omega)},
\end{align*}
and the remainder $\tilde{u} = S_{k}(0,\tilde{g})$ satisfies
\begin{align*}
-\Delta \tilde{u} - k^{2}\tilde{u} = 0,\quad \partial_{\normal}\tilde{u} - \im k \tilde{u}|_{\partial\Omega} = \tilde{g}, 
\end{align*}
where
\begin{align*}
\Vert \tilde{g}\Vert_{H^{1/2}(\partial\Omega)} \leq q\Vert g\Vert_{H^{1/2}(\partial\Omega)}.
\end{align*}
\end{lemma}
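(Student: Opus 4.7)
The plan is to closely mirror the proof of Lemma~\ref{lemma_decomp_source_term}, replacing the low/high-frequency splitting of the volume source $f$ by the corresponding splitting of (a lift of) the boundary data $g$. All estimates in the statement except the new $H^{1}(\Omega)$ bound on $\Delta u_{A}+k^{2}u_{A}$ are the boundary-data analog of \cite[Lemma~4.15]{MelenkSIAM11} and, combined with Theorem~\ref{thm_stability}, follow from the decomposition arguments in \cite[Section~4]{MelenkSIAM11}. Hence the only new content to prove here is
\begin{align*}
\Vert \Delta u_{A}+k^{2}u_{A}\Vert_{H^{1}(\Omega)}\le Ck\Vert g\Vert_{H^{1/2}(\partial\Omega)},
\end{align*}
which will be used in Lemma~\ref{lemma_test_function}.

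For this estimate I would follow the blueprint of the source-term proof. First, lift $g$ to $G^{N}g\in H^{2}(\Omega)$ (with norm bounded by $C\Vert g\Vert_{H^{1/2}(\partial\Omega)}$ via the $k$-independent mapping property of $G^{N}$ recalled from \cite[Remark~4.1]{MelenkSIAM11}) and extend it to $E_{\Omega}(G^{N}g)\in H^{2}(\mathbb{R}^{d})$ by the Stein extension; then split this function via the Fourier cutoff $\chi_{\eta k}$ into a low-frequency piece $w_{L}=L_{\Omega}(G^{N}g)$ and a high-frequency piece $w_{H}=H_{\Omega}(G^{N}g)$. I would define $u_{A}^{I}$ so that its residual $(\Delta+k^{2})u_{A}^{I}$ is exactly a low-frequency function coming from $w_{L}$, define $u_{H^{2}}^{I}$ via $N_{k}$ applied to a high-frequency volume datum built from $w_{H}$ as in the source case, and absorb the resulting boundary-trace mismatch by an auxiliary $u_{A}^{II}=S_{k}(0,\cdot)$; since $u_{A}^{II}$ solves Helmholtz with zero volume data, its contribution to $(\Delta+k^{2})u_{A}$ vanishes, exactly as in the source-term case.

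The main obstacle I expect is to set the split up so that the $k$-power in the final bound is exactly one. Compared with the source-term case, where the low-frequency filter acts directly on $f\in L^{2}(\Omega)$, here we must first lift $g$, which costs derivatives in the Sobolev scale; we must then arrange the decomposition so that each extra derivative is either absorbed by the $H^{1/2}(\partial\Omega)$-boundedness of the lift or by the Fourier-side cutoff. The prototype calculation is
\begin{align*}
\int_{\mathbb{R}^{d}}(1+|\xi|^{2})|\widehat{L_{\mathbb{R}^{d}}E_{\Omega}f}(\xi)|^{2}\,d\xi\le Ck^{2}\Vert E_{\Omega}f\Vert_{L^{2}(\mathbb{R}^{d})}^{2}\le Ck^{2}\Vert f\Vert_{L^{2}(\Omega)}^{2},
\end{align*}
already used in the proof of Lemma~\ref{lemma_decomp_source_term} on the support of $\chi_{\eta k}$; I expect the boundary-data version to read similarly, with the right-hand side $Ck^{2}\Vert g\Vert_{H^{1/2}(\partial\Omega)}^{2}$ after boundedness of $E_{\Omega}$, $G^{N}$ and the Fourier-side gain on the support of $\chi_{\eta k}$ are combined. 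The remainder $\tilde u$ and the decomposition of $u_{H^{2}}$ are then constructed exactly as in the source case, and the remaining bounds in the statement follow from the boundedness properties of $N_{k}$, $S_{k}$ and $S_{k}^{\Delta}$ invoked in the source-term proof.
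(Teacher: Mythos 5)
You correctly isolate the one genuinely new estimate, $\Vert \Delta u_{A}+k^{2}u_{A}\Vert_{H^{1}(\Omega)}\le Ck\Vert g\Vert_{H^{1/2}(\partial\Omega)}$, and you correctly anticipate the two mechanisms the paper uses: the piece of $u_A$ of the form $S_{k}(0,\cdot)$ has identically vanishing residual, and the piece with a low-frequency volume source is bounded on the Fourier side via $(1+|\xi|^{2})\chi_{\eta k}\le Ck^{2}$. However, the decomposition you describe is the source-term decomposition of Lemma~\ref{lemma_decomp_source_term} transplanted to a lift of $g$, and that is not how the boundary-data decomposition of \cite[Lemma~$4.16$]{MelenkSIAM11} is structured. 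In the actual construction the low-frequency part of $g$ enters as \emph{boundary} data, $u_{A}^{I}=S_{k}(0,L_{\partial\Omega}^{N}g)$ with $L_{\partial\Omega}^{N}g=\partial_{\normal}L_{\Omega}(G^{N}g)$, so $\Delta u_{A}^{I}+k^{2}u_{A}^{I}=0$ exactly (your sketch instead assigns the low-frequency volume residual to this piece); the elliptic part is $u_{H^{2}}^{I}=S_{k}^{\Delta}(H_{\partial\Omega}^{N}g)$, a solution of the \emph{positive-definite} problem with high-frequency Robin data, not a Newton potential $N_{k}$ of a volume datum (there is no volume datum in this problem); and the analytic piece with nonzero residual is the sign-correction term $u_{A}^{II}=S_{k}(L_{\Omega}(2k^{2}u_{H^{2}}^{I}),0)$.

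This matters because your prototype calculation, applied to $L_{\Omega}(G^{N}g)$, does not bound the quantity that actually appears. The residual to be estimated is $L_{\Omega}(2k^{2}u_{H^{2}}^{I})$, and the Fourier cutoff alone gives
\begin{align*}
\Vert L_{\Omega}(2k^{2}u_{H^{2}}^{I})\Vert_{H^{1}(\Omega)}^{2}\le Ck^{4}\cdot k^{2}\,\Vert u_{H^{2}}^{I}\Vert_{L^{2}(\Omega)}^{2}\Big/ k^{2},
\end{align*}
i.e.\ $Ck^{4}\Vert u_{H^{2}}^{I}\Vert_{L^{2}(\Omega)}^{2}$, which lands on $Ck^{2}\Vert g\Vert_{H^{1/2}(\partial\Omega)}^{2}$ only after invoking the a priori bound $\Vert u_{H^{2}}^{I}\Vert_{L^{2}(\Omega)}\le Ck^{-1}\Vert g\Vert_{H^{1/2}(\partial\Omega)}$ for the $S_{k}^{\Delta}$ solution, which is \cite[$(4.31)$]{MelenkSIAM11}. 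That estimate is the ingredient that makes the $k$-power come out to exactly one, and it is absent from your argument; the boundedness of $E_{\Omega}$ and $G^{N}$ that you cite cannot substitute for it, since they never see $u_{H^{2}}^{I}$. To close the gap you need to adopt the decomposition of \cite[Lemma~$4.16$]{MelenkSIAM11} as written (with $S_{k}^{\Delta}$ and the $2k^{2}u_{H^{2}}^{I}$ correction) and then apply your Fourier-cutoff calculation to $L_{\Omega}(2k^{2}u_{H^{2}}^{I})$ together with \cite[$(4.31)$]{MelenkSIAM11}.
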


\begin{proof}
According to Theorem~\ref{thm_stability} and \cite[Lemma~$4.16$]{MelenkSIAM11}, it is easy to see that except 
$\Vert \Delta u_{A} + k^{2} u_{A}\Vert_{H^{1}(\Omega)} \leq C k \Vert g\Vert_{H^{1/2}(\partial\Omega)}$, all other statements hold.

In order to prove  $\Vert \Delta u_{A} + k^{2} u_{A}\Vert_{H^{1}(\Omega)} \leq C k \Vert g\Vert_{H^{1/2}(\partial\Omega)}$, 
we need to go through the proof of \cite[Lemma~$4.16$]{MelenkSIAM11}. It is shown that 
\begin{align*}
u_{A} = u_{A}^{I} + u_{A}^{II},\quad u_{A}^{I} = S_{k}(0, L_{\partial\Omega}^{N}g),\quad 
u_{H^{2}}^{I} = S_{k}^{\Delta}(H_{\partial\Omega}^{N}g),\quad u_{A}^{II} = S_{k}(L_{\Omega}(2 k^{2} u_{H^{2}}^{I}),0). 
\end{align*}
So, it is sufficient to show that
\begin{align*}
\Vert \Delta u_{A}^{I} + k^{2} u_{A}^{I}\Vert_{H^{1}(\Omega)} \leq C k \Vert g\Vert_{H^{1/2}(\partial\Omega)},\quad 
\Vert \Delta u_{A}^{II} + k^{2} u_{A}^{II}\Vert_{H^{1}(\Omega)} \leq C k \Vert g\Vert_{H^{1/2}(\partial\Omega)}.
\end{align*}

Since $u_{A}^{II} = S_{k}(L_{\Omega}(2 k^{2} u_{H^{2}}^{I}),0)$, we have 
\begin{align*}
& \Vert \Delta u_{A}^{II} + k^{2} u_{A}^{II}\Vert_{H^{1}(\Omega)}^{2} = 
\Vert L_{\Omega}(2 k^{2} u_{H^{2}}^{I})\Vert_{H^{1}(\Omega)}^{2}
= 2k^{2}\int_{\mathbb{R}^{d}} (1 + \vert \xi\vert^{2})\vert \widehat{L_{\Omega}( u_{H^{2}}^{I})}\vert^{2}\\
= & 2k^{2}\int_{\mathbb{R}^{d}} (1 + \vert \xi\vert^{2})\chi_{\eta k} \vert \widehat{ E_{\Omega} u_{H^{2}}^{I}}\vert^{2}
\leq C k^{4} \Vert E_{\Omega} u_{H^{2}}^{I}\Vert_{L^{2}(\Omega)}^{2}\leq C k^{4} \Vert u_{H^{2}}^{I}\Vert_{L^{2}(\Omega)}^{2}\\
\leq & C k^{2}\Vert g\Vert_{H^{1/2}(\partial\Omega)}^{2}.
\end{align*}
The last inequality above is obtained by \cite[$(4.31)$]{MelenkSIAM11}. On the other hand, $u_{A}^{I} = S_{k}(0, L_{\partial\Omega}^{N}g)$ 
implies that $\Delta u_{A}^{I} + k^{2} u_{A}^{I}=0$. So, we can conclude that 
$\Vert \Delta u_{A} + k^{2} u_{A}\Vert_{H^{1}(\Omega)} \leq C k \Vert g\Vert_{H^{1/2}(\partial\Omega)}$.
\end{proof}

\begin{theorem}
\label{thm_divergence_regularity}
We assume that the Assumptions (A1, A2) hold. Then, there are constants 
$C,\gamma>0$ independent of $k\geq k_{0}$ such that for any 
$(f,g)\in L^{2}(\Omega)\times H^{1/2}(\partial\Omega)$, the function $u=S_{k}(f,g)$ 
can be written as $u = u_{A} + u_{H^{2}}$, where
\begin{subequations}
\label{decomp_ineqs}
\begin{align}
\label{decomp_ineq1}
& k\Vert u_{A} \Vert_{L^{2}(\Omega)} + \Vert \nabla u_{A} \Vert_{L^{2}(\Omega)} 
\leq C \left(\Vert f\Vert_{L^{2}(\Omega)}+\Vert g\Vert_{H^{1/2}(\partial\Omega)}\right),\\
\label{decomp_ineq2}
& \Vert \nabla^{p+2} u_{A}\Vert_{L^{2}(\Omega)} \leq Ck^{-1}\gamma^{p} \max (p+2, k)^{p+2}
\left(\Vert f\Vert_{L^{2}(\Omega)}+ \Vert g\Vert_{H^{1/2}(\partial\Omega)}\right)\quad \forall p\geq 0,\\
\label{decomp_ineq3}
& \Vert \Delta u_{A} + k^{2} u_{A}\Vert_{H^{1}(\Omega)} \leq C k 
\left(\Vert f\Vert_{L^{2}(\Omega)}+ \Vert g\Vert_{H^{1/2}(\partial\Omega)}\right),\\
\label{decomp_ineq4}
& \Vert u_{H^{2}}\Vert_{H^{2}(\Omega)} + k \Vert u_{H^{2}}\Vert_{H^{1}(\Omega)} 
+ k^{2}\Vert u\Vert_{L^{2}(\Omega)} \leq C \left(\Vert f\Vert_{L^{2}(\Omega)}+\Vert g\Vert_{H^{1/2}(\partial\Omega)}\right).
\end{align}
\end{subequations}
\end{theorem}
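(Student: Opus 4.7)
My plan is to use the two decomposition lemmas from the preceding subsection as a single iteration step, and then iterate them to exhaustion. By linearity of the Helmholtz solution operator, $u=S_{k}(f,g)=S_{k}(f,0)+S_{k}(0,g)$, so I apply Lemma~\ref{lemma_decomp_source_term} to the first summand and Lemma~\ref{lemma_decomp_boundary_data} to the second. Each lemma produces an oscillatory analytic piece, a nonoscillatory $H^{2}$ piece, and a Helmholtz-type remainder whose data norm has been contracted by the fixed factor $q\in(0,1)$. Feeding each remainder back into the appropriate lemma produces sequences $\{u_{A}^{(j,f)}\}_{j\geq 0},\{u_{H^{2}}^{(j,f)}\}_{j\geq 0}$ and $\{u_{A}^{(j,g)}\}_{j\geq 0},\{u_{H^{2}}^{(j,g)}\}_{j\geq 0}$, from which I set
\begin{align*}
u_{A}:=\sum_{j\geq 0}\bigl(u_{A}^{(j,f)}+u_{A}^{(j,g)}\bigr),\qquad
u_{H^{2}}:=\sum_{j\geq 0}\bigl(u_{H^{2}}^{(j,f)}+u_{H^{2}}^{(j,g)}\bigr).
\end{align*}

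Next I would verify convergence of each series in the norms prescribed by the theorem. At step $j$ the source-type remainder satisfies $\Vert\tilde f_{j}\Vert_{L^{2}(\Omega)}\leq q^{j}\Vert f\Vert_{L^{2}(\Omega)}$ and the boundary-type remainder satisfies $\Vert\tilde g_{j}\Vert_{H^{1/2}(\partial\Omega)}\leq q^{j}\Vert g\Vert_{H^{1/2}(\partial\Omega)}$, so the single-step estimates of Lemmas~\ref{lemma_decomp_source_term}--\ref{lemma_decomp_boundary_data} bound the $j$-th term of each series by $q^{j}$ times the constants appearing in (\ref{decomp_ineq1})--(\ref{decomp_ineq4}). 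Summing the geometric series $\sum_{j\geq 0}q^{j}=1/(1-q)$ then yields (\ref{decomp_ineq1}), (\ref{decomp_ineq2}), and the $\Vert u_{H^{2}}\Vert_{H^{2}(\Omega)}+k\Vert u_{H^{2}}\Vert_{H^{1}(\Omega)}$ part of (\ref{decomp_ineq4}); the remaining $k^{2}\Vert u\Vert_{L^{2}(\Omega)}$ term of (\ref{decomp_ineq4}) follows by combining assumption (A1) with the identity $k^{2}u=-\Delta u-f$, the already-established $H^{2}$ bound on $u_{H^{2}}$, and the control of $\Delta u_{A}+k^{2}u_{A}$ furnished by (\ref{decomp_ineq3}), exactly as in \cite[Theorem~$4.10$]{MelenkSIAM11}.

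The only genuinely new bound is (\ref{decomp_ineq3}), and it requires no new ideas at the iteration level: each application of Lemma~\ref{lemma_decomp_source_term} or Lemma~\ref{lemma_decomp_boundary_data} already yields $\Vert\Delta u_{A}^{(j)}+k^{2}u_{A}^{(j)}\Vert_{H^{1}(\Omega)}\leq Ck\,q^{j}\bigl(\Vert f\Vert_{L^{2}(\Omega)}+\Vert g\Vert_{H^{1/2}(\partial\Omega)}\bigr)$, and a further geometric-series summation delivers (\ref{decomp_ineq3}) for the limit $u_{A}$.

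The main obstacle, as I see it, is actually located upstream in Lemmas~\ref{lemma_decomp_source_term} and~\ref{lemma_decomp_boundary_data}: the new $H^{1}$-bound on the Helmholtz residual of the analytic part has to be tracked through the frequency filters $L_{\Omega},H_{\Omega},L_{\partial\Omega}^{N},H_{\partial\Omega}^{N}$, the Newton potential $N_{k}$, the lifting $G^{N}$, and the modified-Helmholtz solver $S_{k}^{\Delta}$ introduced in \cite{MelenkSIAM11}. Once those refined single-step estimates are established (as is done in Lemmas~\ref{lemma_decomp_source_term}--\ref{lemma_decomp_boundary_data} via the explicit ansätze $u_{A}^{I}=S_{k}(L_{\Omega}f,0)$, $u_{A}^{II}=S_{k}(0,L_{\partial\Omega}^{N}(\im k u_{H^{2}}^{I}-\partial_{\normal}u_{H^{2}}^{I}))$, etc.), the iteration argument for Theorem~\ref{thm_divergence_regularity} itself is a routine geometric-series summation.
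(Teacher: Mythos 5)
Your proposal is correct and is essentially the paper's own argument: the paper proves Theorem~\ref{thm_divergence_regularity} by invoking the iteration scheme of \cite[Theorem~$4.10$]{MelenkSIAM11} with Lemmas~\ref{lemma_decomp_source_term} and~\ref{lemma_decomp_boundary_data} as the single-step input, which is exactly the split $u=S_{k}(f,0)+S_{k}(0,g)$, repeated application to the contracted remainders, and geometric-series summation that you describe. You also correctly identify that the only genuinely new content, the residual bound (\ref{decomp_ineq3}), is already supplied at the single-step level by the two lemmas and survives the summation unchanged.
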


\begin{proof}
By proceeding in the same way as the proof of \cite[Theorem~$4.10$]{MelenkSIAM11} 
with Lemma~\ref{lemma_decomp_source_term} and Lemma~\ref{lemma_decomp_boundary_data}, 
we can conclude that the proof is complete.
\end{proof}

\begin{lemma}
\label{lemma_analytic}
We assume that the Assumptions (A1, A2) hold.
Then, there are constants $C,\gamma>0$ independent of $k\geq k_{0}$ such that for 
any analytic functions $\tilde{f}$ and $\tilde{g}$ in $\Omega$, 
\begin{align*}
& \Vert \nabla^{p+2} S_{k}(\tilde{f}, \tilde{g})\Vert_{L^{2}(\Omega)} \leq 
C \gamma^{p+2} k^{-1}\max (p+2, k)^{p+2} \left(\Vert \tilde{f}\Vert_{L^{2}(\Omega)} 
+ \Vert \tilde{g}\Vert_{H^{1}(\Omega)}  \right),\\
& k\Vert  S_{k}(\tilde{f}, \tilde{g})\Vert_{L^{2}(\Omega)} + \Vert \nabla S_{k}(\tilde{f}, \tilde{g}) \Vert_{L^{2}(\Omega)} 
\leq C \left( \Vert \tilde{f}\Vert_{L^{2}(\Omega)} + \Vert \tilde{g}\Vert_{H^{1}(\Omega)} \right).
\end{align*}
\end{lemma}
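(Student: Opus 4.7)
The plan is to handle the two inequalities separately, using assumption (A1) for the $H^{1}$-type bound and iterating the decomposition of Theorem~\ref{thm_divergence_regularity} together with analytic elliptic regularity for the higher-derivative bound.

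The second estimate follows directly: the trace embedding $\|\tilde{g}|_{\partial\Omega}\|_{L^{2}(\partial\Omega)} \le C\|\tilde{g}\|_{H^{1}(\Omega)}$ combined with (A1) applied to $u=S_{k}(\tilde{f},\tilde{g})$ gives $\|\nabla u\|_{L^{2}(\Omega)} + k\|u\|_{L^{2}(\Omega)} \le C\bigl(\|\tilde{f}\|_{L^{2}(\Omega)} + \|\tilde{g}\|_{H^{1}(\Omega)}\bigr)$, which is the claim.

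For the first estimate, I would start by applying Theorem~\ref{thm_divergence_regularity} to the pair $(\tilde{f},\tilde{g}|_{\partial\Omega})$ (bounding $\|\tilde{g}|_{\partial\Omega}\|_{H^{1/2}(\partial\Omega)} \le C\|\tilde{g}\|_{H^{1}(\Omega)}$) to obtain the splitting $u = u_{A} + u_{H^{2}}$, where $u_{A}$ already obeys the target derivative bound via~(\ref{decomp_ineq2}). Only $\|\nabla^{p+2}u_{H^{2}}\|_{L^{2}(\Omega)}$ remains to be controlled, since (\ref{decomp_ineq4}) gives only an $H^{2}$-estimate. To remedy this, I would iterate the decomposition: the proofs of Lemmas~\ref{lemma_decomp_source_term} and~\ref{lemma_decomp_boundary_data} reveal that the residual $\tilde{u}$ in each step is itself a Helmholtz solution $S_{k}(\tilde{f}_{j},\tilde{g}_{j})$ with data norms shrinking by a fixed factor $q\in(0,1)$. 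Iterating yields $u = \sum_{n\ge 0}(u_{A}^{(n)} + u_{H^{2}}^{(n)})$; the sum $\sum u_{A}^{(n)}$ satisfies the desired analytic bound by geometric summation of~(\ref{decomp_ineq2}), while $\sum u_{H^{2}}^{(n)}$ is controlled by invoking classical analytic elliptic regularity for $-\Delta - k^{2}$ on the analytic domain $\Omega$ (using (A2)), exploiting the analyticity of $\tilde{f},\tilde{g}$ to upgrade the $H^{2}$ bound on each $u_{H^{2}}^{(n)}$ to an explicit higher-derivative estimate with the correct $k$-scaling.

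The main obstacle is extracting the exact factor $k^{-1}\max(p+2,k)^{p+2}$ from the analytic-regularity estimate on $u_{H^{2}}$. Standard Cauchy-type bounds on analytic solutions of elliptic boundary value problems deliver derivative growth of the form $\gamma^{n}n^{n}$ applied to some norm of the data, but tracking the wave number $k$ through the Helmholtz structure — in which each differentiation interacts with the lower-order term $k^{2}u$ — is delicate, and the final bound must match the sharp form already produced by (\ref{decomp_ineq2}). Once this bookkeeping is carried out and the geometric series in the iteration is summed (relabeling $\gamma$ if necessary to absorb a factor of $\gamma^{2}$), the claim follows from adding the contributions of $u_{A}$ and $u_{H^{2}}$.
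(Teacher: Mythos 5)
Your second estimate is fine and matches the paper: assumption (A1) together with the trace inequality gives it at once. The gap is in the first estimate, and it is not a bookkeeping gap but a missing idea. The route through Theorem~\ref{thm_divergence_regularity} does not work: the nonoscillatory part $u_{H^{2}}$ produced by that decomposition is built (in Lemmas~\ref{lemma_decomp_source_term} and~\ref{lemma_decomp_boundary_data}) from the \emph{high-frequency filtered} data $H_{\Omega}f$ and $H^{N}_{\partial\Omega}g$, via a Newton potential and the operator $S_{k}^{\Delta}$. These filtered data involve the Stein extension of $\tilde f$ off $\Omega$ and are not quantitatively analytic; their higher derivatives are in no way controlled by $\Vert \tilde f\Vert_{L^{2}(\Omega)}$ alone, which is all the right-hand side of the lemma permits. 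So the ``classical analytic elliptic regularity'' you propose to apply to each $u_{H^{2}}^{(n)}$ has no usable input, and the iteration you describe has in any case already been performed inside Theorem~\ref{thm_divergence_regularity} (its statement has no remainder term), so there is nothing further to iterate. What you flag as ``the main obstacle'' --- extracting the factor $k^{-1}\max(p+2,k)^{p+2}$ from a wavenumber-explicit analytic regularity estimate --- is precisely the entire content of the lemma, and your proposal defers it rather than proves it. Note also that if you \emph{did} possess such a wavenumber-explicit analytic regularity estimate for the Helmholtz operator with analytic data, you would apply it directly to $u=S_{k}(\tilde f,\tilde g)$ and the decomposition would be superfluous.

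The paper's actual argument bypasses the decomposition entirely. Setting $\epsilon=k^{-1}$, the solution $v=S_{k}(\tilde f,\tilde g)$ satisfies the singularly perturbed problem
\begin{align*}
-\epsilon^{2}\Delta v - v = \epsilon^{2}\tilde f \text{ in }\Omega,\qquad
\epsilon^{2}\partial_{\normal} v = \epsilon\left(\epsilon\tilde g + \im v\right)\text{ on }\partial\Omega,
\end{align*}
and the derivative bounds, with the stated dependence $\gamma^{p+2}k^{-1}\max(p+2,k)^{p+2}$, follow from the analytic regularity theory for such problems on analytic domains, namely \cite[Proposition~$5.4.5$ and Remark~$5.4.6$]{MelenkBook02} (this is also the mechanism behind \cite[Lemma~$4.13$]{MelenkSIAM11}). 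That rescaling and citation is the step your proposal is missing; without it, or an equivalent wavenumber-explicit analytic a priori estimate proved from scratch, the first inequality is not established.
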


\begin{proof}
We denote by $v = S_{k}(\tilde{f}, \tilde{g})$.
By the Assumption (A1), we have immediately
\begin{align*}
k\Vert  v\Vert_{L^{2}(\Omega)} + \Vert \nabla v \Vert_{L^{2}(\Omega)} 
\leq C \left( \Vert \tilde{f}\Vert_{L^{2}(\Omega)} + \Vert \tilde{g}\Vert_{H^{1}(\Omega)} \right).
\end{align*}
By the Assumption (A2) and \cite[Theorem~$4.18$(ii)]{McleanEllipticBook}, 
it is easy to see $v\in C^{\infty}(\Omega)$.

In order to show the other estimate, we follow the main steps in the proof of \cite[Lemma~$4.13$]{MelenkSIAM11}.
We take $\epsilon = k^{-1}$. It is easy to see that $v$ satisfies
\begin{align*}
-\epsilon^{2}\Delta v - v = \epsilon^{2}\tilde{f} \text{ in }\Omega,\qquad 
\epsilon^{2} \partial_{\normal} v = \epsilon \left(\epsilon \tilde{g} + \im v \right) \text{ on } \partial \Omega.
\end{align*}
Then, by applying \cite[Proposition~$5.4.5$ and Remark~$5.4.6$]{MelenkBook02} to the above equation, 
we can conclude that the proof is complete.
\end{proof}

  \subsection{Approximation properties of finite element spaces}
We would like to show approximation properties of some projection operators for finite element spaces 
(\ref{sol_space_discrete}).

We define a projection $\widehat{\Pi}_{\boldsymbol{V}} : H^{1}(\widehat{K};\mathbb{R}^{d} )\rightarrow 
\boldsymbol{RT}_{p+1}(\widehat{K})$ by 
\begin{subequations}
\label{proj_div}
\begin{align}
\label{proj_div_eq1}
& \langle (\widehat{\Pi}_{\boldsymbol{V}}\widehat{\boldsymbol{\psi}} - \widehat{\boldsymbol{\psi}})
\cdot\widehat{\normal}, \widehat{\mu} \rangle_{\widehat{F}} = 0, \quad 
\forall \widehat{\mu} \in P_{p+1}(\widehat{F}), \widehat{F}\in \bigtriangleup_{d-1}(\widehat{K}),\\
\label{proj_div_eq2}
& ( \widehat{\Pi}_{\boldsymbol{V}}\widehat{\boldsymbol{\psi}} - \widehat{\boldsymbol{\psi}}, 
\widehat{\nabla}\times \widehat{\boldsymbol{\phi}}  )_{\widehat{K}} = 0,\quad  
\forall \widehat{\boldsymbol{\phi}} \in \boldsymbol{Q}_{p+1,0}(\widehat{K}),\\
\label{proj_div_eq3}
& \Vert \widehat{\nabla}\cdot ( \widehat{\Pi}_{\boldsymbol{V}}\widehat{\boldsymbol{\psi}} 
- \widehat{\boldsymbol{\psi}}) \Vert_{L^{2}(\widehat{K})}\rightarrow \min.
\end{align}
\end{subequations}
Here, $\boldsymbol{Q}_{p+1,0}(\widehat{K}) = \{\widehat{\boldsymbol{\phi}} \in P_{p+2}^{-}
\Lambda^{d-2}(\widehat{K}): \text{tr} \widehat{\boldsymbol{\phi}} |_{\partial \widehat{K}} = 0 \}$.
When $d=3$, $P_{p+2}^{-}\Lambda^{d-2}(\widehat{K})$ is the \Nedelec  1st-kind 
$H(\text{curl})$ element of degree $\leq p+1$. When $d=2$, $P_{p+2}^{-}\Lambda^{d-2}(\widehat{K})$ 
is the Lagrange element of degree $\leq p+2$. 

We emphasize that the projection (\ref{proj_div}) is the same as the projection \cite[$(201)$]{Demko:2008:PBI} 
except the way to impose normal component on the boundary of $\widehat{K}$ (see the difference between 
(\ref{proj_div_eq1}) and the first condition in \cite[$(201)$]{Demko:2008:PBI}).

\begin{remark}
\label{remark_div_space}
Since we use the Raviart-Thomas space for the approximation to functions in $\boldsymbol{V}$, 
the natural idea is to utilize the projection-based interpolation $\Pi^{\text{div}}$ in \cite[$(201)$]{Demko:2008:PBI}. 
We notice that in \cite[Theorem~$5.3$]{Demko:2008:PBI}, the estimate of approximation error 
$\Vert \Pi^{\text{div}}\boldsymbol{\psi} - \boldsymbol{\psi}\Vert_{H(\text{div},\Omega)}$ gets involved with 
$\Vert \boldsymbol{\psi}\Vert_{H^{r}(\text{div},\Omega)}$ where $r>0$ and the Sobolev norm 
$\Vert \cdot \Vert_{H^{r}(\text{div},\Omega)}$ is defined in \Hormander's style. 
When $r$ is a non-negative integer, the norm $\Vert \cdot \Vert_{H^{r}(\text{div},\Omega)}$ in 
\Hormander's style is equivalent to $\left(\Sigma_{0\leq i\leq r} \Vert \nabla^{i} \cdot 
\Vert_{L^{2}(\Omega)}^{2}\right)^{1/2}$ which is provided in Lemma~\ref{lemma_test_function}. 
However, it is {\em not} obvious to see how the equivalent constants depend on $r$. 
So, we introduce projection (\ref{proj_div}) and give the following Lemma~\ref{lemma_proj_div_reference}.
\end{remark}

\begin{lemma}
\label{lemma_proj_div_reference}
There is a constant $C>0$ such that for any $\widehat{\boldsymbol{\psi}} \in H^{1}(\widehat{K};\mathbb{R}^{d})$,
\begin{align*}
& \Vert  \widehat{\Pi}_{\boldsymbol{V}}\widehat{\boldsymbol{\psi}} 
- \widehat{\boldsymbol{\psi}}\Vert_{H(\text{div},\widehat{K})} \\
\leq & C \left( \inf_{\widehat{\boldsymbol{\phi}}\in \boldsymbol{RT}_{p+1}(\widehat{K})} 
\Vert \widehat{\boldsymbol{\psi}} - \widehat{\boldsymbol{\phi}} \Vert_{H(\text{div}, \hat{K})} 
+ \inf_{\widehat{\boldsymbol{\varphi}}\in \boldsymbol{RT}_{p+1}(\widehat{K})} 
\Vert (\widehat{\boldsymbol{\varphi}} - \widehat{\boldsymbol{\psi}})\cdot\widehat{\normal} 
\Vert_{L^{2}(\partial\widehat{K})}\right).
\end{align*} 
In addition, we have 
\begin{align}
\label{commutativity_div_reference}
\widehat{\nabla}\cdot \widehat{\Pi}_{\boldsymbol{V}}\widehat{\boldsymbol{\psi}} 
= \widehat{P}\widehat{\nabla}\cdot\widehat{\boldsymbol{\psi}},\qquad 
(\widehat{\Pi}_{\boldsymbol{V}}\widehat{\boldsymbol{\psi}})\cdot\widehat{\normal} |_{\widehat{F}} 
= \widehat{P}_{\widehat{F}} (\widehat{\boldsymbol{\psi}}\cdot\widehat{\normal} |_{\widehat{F}})\quad 
\forall \widehat{F}\in \bigtriangleup_{d-1}(\widehat{K}).
\end{align}
Here, $\widehat{P}$ and $\widehat{P}_{\widehat{F}}$ are the standard $L^{2}$-orthogonal projections 
onto $P_{p+1}(\widehat{K})$ and $P_{p+1}(\widehat{F})$, respectively.
\end{lemma}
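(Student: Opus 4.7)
The plan is to argue by invariance-plus-stability after establishing the commuting identities (\ref{commutativity_div_reference}) as consequences of the defining conditions.

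\emph{Well-posedness and invariance.} I would first check that (\ref{proj_div_eq1})--(\ref{proj_div_eq3}) uniquely determine $\widehat{\Pi}_{\boldsymbol{V}}\widehat{\boldsymbol{\psi}} \in \boldsymbol{RT}_{p+1}(\widehat{K})$. Setting $\widehat{\boldsymbol{\psi}}=0$ in the three conditions, (\ref{proj_div_eq1}) forces a candidate $\widehat{\boldsymbol{\chi}}$ to have vanishing normal trace on $\partial\widehat{K}$; the subspace of $\boldsymbol{RT}_{p+1}(\widehat{K})$ with zero normal trace splits into the divergence-free part $\widehat{\nabla}\times\boldsymbol{Q}_{p+1,0}(\widehat{K})$ and a complement on which $\widehat{\nabla}\cdot$ is a bijection onto the mean-zero subspace of $P_{p+1}(\widehat{K})$; (\ref{proj_div_eq2}) kills the divergence-free part (by testing against the potential of $\widehat{\boldsymbol{\chi}}$ itself), and (\ref{proj_div_eq3}) together with $\int_{\widehat{K}}\widehat{\nabla}\cdot\widehat{\boldsymbol{\chi}}=\int_{\partial\widehat{K}}\widehat{\boldsymbol{\chi}}\cdot\widehat{\normal}=0$ forces the divergence to vanish as well. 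Invariance $\widehat{\Pi}_{\boldsymbol{V}}\widehat{\boldsymbol{\phi}}=\widehat{\boldsymbol{\phi}}$ for $\widehat{\boldsymbol{\phi}} \in \boldsymbol{RT}_{p+1}(\widehat{K})$ is then immediate.

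\emph{Commuting identities.} The boundary identity in (\ref{commutativity_div_reference}) is a direct restatement of (\ref{proj_div_eq1}), since $(\widehat{\Pi}_{\boldsymbol{V}}\widehat{\boldsymbol{\psi}})\cdot\widehat{\normal}|_{\widehat{F}} \in P_{p+1}(\widehat{F})$. For the divergence identity, $\widehat{\nabla}\cdot\widehat{\Pi}_{\boldsymbol{V}}\widehat{\boldsymbol{\psi}} \in P_{p+1}(\widehat{K})$ has the same integral mean as $\widehat{\nabla}\cdot\widehat{\boldsymbol{\psi}}$ by (\ref{proj_div_eq1}) and the divergence theorem; the minimization (\ref{proj_div_eq3}) then selects the closest $L^{2}$ point to $\widehat{\nabla}\cdot\widehat{\boldsymbol{\psi}}$ within this mean-constrained affine subspace of $P_{p+1}(\widehat{K})$, which is exactly $\widehat{P}\widehat{\nabla}\cdot\widehat{\boldsymbol{\psi}}$ (since the $L^{2}$-projection preserves the mean).

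\emph{Stability and best approximation.} I would next establish the stability bound
$$\Vert \widehat{\Pi}_{\boldsymbol{V}}\widehat{\boldsymbol{\psi}}\Vert_{H(\text{div},\widehat{K})} \leq C\bigl(\Vert \widehat{\boldsymbol{\psi}}\Vert_{H(\text{div},\widehat{K})} + \Vert \widehat{\boldsymbol{\psi}}\cdot\widehat{\normal}\Vert_{L^{2}(\partial\widehat{K})}\bigr).$$
The divergence part follows from the commuting diagram. For the $L^{2}$ part, write $\widehat{\Pi}_{\boldsymbol{V}}\widehat{\boldsymbol{\psi}} = \widehat{\boldsymbol{\chi}}_{0} + \widehat{\boldsymbol{\xi}}$ with $\widehat{\boldsymbol{\chi}}_{0} \in \boldsymbol{RT}_{p+1}(\widehat{K})$ a polynomial lifting of the face data $\widehat{P}_{\widehat{F}}(\widehat{\boldsymbol{\psi}}\cdot\widehat{\normal})$ satisfying $\Vert\widehat{\boldsymbol{\chi}}_{0}\Vert_{L^{2}(\widehat{K})}\leq C\Vert\widehat{\boldsymbol{\psi}}\cdot\widehat{\normal}\Vert_{L^{2}(\partial\widehat{K})}$, and $\widehat{\boldsymbol{\xi}}$ the (zero-normal-trace) remainder. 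The divergence-carrying component of $\widehat{\boldsymbol{\xi}}$ is bounded in $L^{2}$ by $\Vert \widehat{\nabla}\cdot\widehat{\boldsymbol{\psi}}\Vert_{L^{2}(\widehat{K})}$ through a Brezzi inf-sup on the mixed pair formed by zero-normal-trace $\boldsymbol{RT}_{p+1}(\widehat{K})$ and mean-zero $P_{p+1}(\widehat{K})$, while its divergence-free part in $\widehat{\nabla}\times\boldsymbol{Q}_{p+1,0}(\widehat{K})$ is bounded by testing (\ref{proj_div_eq2}) against its own generating potential. Combining with invariance gives, for any $\widehat{\boldsymbol{\phi}},\widehat{\boldsymbol{\varphi}}\in \boldsymbol{RT}_{p+1}(\widehat{K})$,
$$\widehat{\Pi}_{\boldsymbol{V}}\widehat{\boldsymbol{\psi}}-\widehat{\boldsymbol{\psi}} = \widehat{\Pi}_{\boldsymbol{V}}(\widehat{\boldsymbol{\psi}}-\widehat{\boldsymbol{\phi}}) - (\widehat{\boldsymbol{\psi}}-\widehat{\boldsymbol{\phi}}),$$
so applying stability to $\widehat{\boldsymbol{\psi}}-\widehat{\boldsymbol{\phi}}$ while decoupling the boundary infimum via $(\widehat{\boldsymbol{\psi}}-\widehat{\boldsymbol{\phi}})\cdot\widehat{\normal} = (\widehat{\boldsymbol{\psi}}-\widehat{\boldsymbol{\varphi}})\cdot\widehat{\normal} + (\widehat{\boldsymbol{\varphi}}-\widehat{\boldsymbol{\phi}})\cdot\widehat{\normal}$ (with the polynomial difference controlled on the reference cell by a trace inequality) and taking infima yields the claim.

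\emph{Main obstacle.} The delicate step is the stability bound above with \emph{separated} appearances of the $H(\text{div},\widehat{K})$ norm and the $L^{2}(\partial\widehat{K})$ trace norm, instead of a combined $H^{r}(\text{div})$-type norm as in \cite{Demko:2008:PBI}. This decoupling is precisely what the $L^{2}$-projective condition (\ref{proj_div_eq1}) was tailored to deliver, and it is what later enables the $k,h,p$-explicit FOSLS error analysis to exploit boundary regularity of $\boldsymbol{\psi}\cdot\normal$ on $\partial\Omega$ directly (cf.\ Remark~\ref{remark_div_space}).
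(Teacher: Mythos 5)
Your overall strategy (prove a stability bound for $\widehat{\Pi}_{\boldsymbol{V}}$ with separated interior and boundary terms, then conclude by invariance) is genuinely different from the paper's, which instead compares $\widehat{\Pi}_{\boldsymbol{V}}\widehat{\boldsymbol{\psi}}$ with the Demkowicz projection $\widehat{P}^{\text{div}}\widehat{\boldsymbol{\psi}}$ of \cite[$(198)$]{Demko:2008:PBI}, characterizes the difference through a discrete minimal--$H(\text{div})$-norm extension of its normal trace, bounds that extension by the $p$-uniform polynomial extension operator of \cite[Theorem~$7.1$]{DemkoGopalSchob:2009:EOP3}, and then invokes the known quasi-optimality of $\widehat{P}^{\text{div}}$. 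Your treatment of well-posedness and of the commuting identities (\ref{commutativity_div_reference}) is fine and matches what the paper dismisses as straightforward. The difficulty is in the stability-plus-decoupling part: the constant here must be independent of $p$ --- that is the entire point of the lemma (cf.\ Remark~\ref{remark_div_space}, and the use of the lemma inside Lemma~\ref{lemma_proj_div_domain}, whose constants are claimed independent of $p$) --- and your argument does not deliver that.

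Concretely: (i) the existence of a lifting $\widehat{\boldsymbol{\chi}}_{0}\in\boldsymbol{RT}_{p+1}(\widehat{K})$ of the projected face data with $\Vert\widehat{\boldsymbol{\chi}}_{0}\Vert_{H(\text{div},\widehat{K})}\leq C\Vert\widehat{\boldsymbol{\psi}}\cdot\widehat{\normal}\Vert_{L^{2}(\partial\widehat{K})}$ and $C$ independent of $p$ is not elementary; it is essentially the polynomial extension theorem \cite[Theorem~$7.1$]{DemkoGopalSchob:2009:EOP3} that the paper has to cite, and you assert it without justification (you also only state an $L^{2}(\widehat{K})$ bound, whereas your remainder argument needs control of $\widehat{\nabla}\cdot\widehat{\boldsymbol{\chi}}_{0}$ as well). (ii) The $p$-uniform inf-sup for the divergence on zero-normal-trace $\boldsymbol{RT}_{p+1}(\widehat{K})$ is likewise a nontrivial discrete Friedrichs-type fact, for which the paper cites \cite[Lemma~$5.2$]{Demko:2008:PBI}. (iii) Most seriously, the final decoupling step fails: you propose to control $\Vert(\widehat{\boldsymbol{\varphi}}-\widehat{\boldsymbol{\phi}})\cdot\widehat{\normal}\Vert_{L^{2}(\partial\widehat{K})}$ ``by a trace inequality'', but the $H(\text{div},\widehat{K})$ norm does not control the $L^{2}(\partial\widehat{K})$ normal trace at all, and the polynomial trace/inverse inequality on the reference element carries a constant that grows with $p$, which would contaminate the final estimate. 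The paper avoids this entirely: by (\ref{commutativity_div_reference}) the normal trace of $\widehat{\Pi}_{\boldsymbol{V}}\widehat{\boldsymbol{\psi}}$ is the face-wise $L^{2}$-orthogonal projection $\widehat{P}_{\widehat{F}}(\widehat{\boldsymbol{\psi}}\cdot\widehat{\normal})$, so the boundary error $\Vert(\widehat{\Pi}_{\boldsymbol{V}}\widehat{\boldsymbol{\psi}}-\widehat{\boldsymbol{\psi}})\cdot\widehat{\normal}\Vert_{L^{2}(\partial\widehat{K})}$ is bounded by the boundary infimum directly by best approximation, with no trace inequality, while the interior infimum comes from the comparison with $\widehat{P}^{\text{div}}$. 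To repair your proof you would need to isolate the boundary contribution through this projection property rather than through a stability bound applied to $\widehat{\boldsymbol{\psi}}-\widehat{\boldsymbol{\phi}}$, and explicitly import the two $p$-uniform discrete results above.
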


\begin{proof}
(\ref{commutativity_div_reference}) can be verified straightforwardly 
by the definition of $\widehat{\Pi}_{\boldsymbol{V}}$. In the following, 
we give the proof of the inequality for the case $d=3$, 
which is similar to that of \cite[Theorem~$5.3$]{Demko:2008:PBI}.

We define $\widehat{P}^{\text{div}}: H^{1}(\widehat{K};\mathbb{R}^{3})\rightarrow 
\boldsymbol{RT}_{p+1}(\widehat{K})$ by
\begin{align*}
& ( \widehat{P}^{\text{div}}\widehat{\boldsymbol{\psi}} - \widehat{\boldsymbol{\psi}}, 
\widehat{\nabla}\times \widehat{\boldsymbol{\phi}}  )_{\widehat{K}} = 0,\quad  
\forall \widehat{\boldsymbol{\phi}} \in P_{p+2}^{-}\Lambda^{1}(\widehat{K}),\\
& \Vert \widehat{\nabla}\cdot ( \widehat{P}^{\text{div}}\widehat{\boldsymbol{\psi}} 
- \widehat{\boldsymbol{\psi}}) \Vert_{L^{2}(\widehat{K})}\rightarrow \min.
\end{align*}
$\widehat{P}^{\text{div}}$ is introduced in \cite[$(198)$]{Demko:2008:PBI}.

We denote by $\boldsymbol{q} = \widehat{\Pi}_{\boldsymbol{V}}\widehat{\boldsymbol{\psi}} 
- \widehat{P}^{\text{div}}\widehat{\boldsymbol{\psi}}$. Then, 
\begin{align*}
& \boldsymbol{q}\cdot\widehat{\normal} = (\widehat{\Pi}_{\boldsymbol{V}}\widehat{\boldsymbol{\psi}} 
- \widehat{P}^{\text{div}}\widehat{\boldsymbol{\psi}})\cdot\widehat{\normal}\text{ on }\partial\widehat{K},\\
& ( \boldsymbol{q}, \widehat{\nabla}\times \widehat{\boldsymbol{\phi}}  )_{\widehat{K}} = 0,\quad  
\forall \widehat{\boldsymbol{\phi}} \in \boldsymbol{Q}_{p+1,0}(\widehat{K}),\\
& \Vert \widehat{\nabla}\cdot \boldsymbol{q}\Vert_{L^{2}(\widehat{K})}\rightarrow \min.
\end{align*}
We define $\boldsymbol{p}\in \boldsymbol{RT}_{p+1}(\widehat{K})$ by
\begin{align*}
& \boldsymbol{p}\cdot\widehat{\normal} = (\widehat{\Pi}_{\boldsymbol{V}}\widehat{\boldsymbol{\psi}} 
- \widehat{P}^{\text{div}}\widehat{\boldsymbol{\psi}})\cdot\widehat{\normal}\text{ on }\partial\widehat{K},\\
& ( \boldsymbol{p}, \widehat{\nabla}\times \widehat{\boldsymbol{\phi}}  )_{\widehat{K}} = 0,\quad  
\forall \widehat{\boldsymbol{\phi}} \in \boldsymbol{Q}_{p+1,0}(\widehat{K}),\\
& \Vert \boldsymbol{p}\Vert_{H(\text{div},\widehat{K})}\rightarrow \min.
\end{align*}
We claim that $\boldsymbol{p}$ satisfies 
\begin{subequations}
\label{p_props}
\begin{align}
\label{p_prop1}
& \boldsymbol{p}\cdot\widehat{\normal} = (\widehat{\Pi}_{\boldsymbol{V}}\widehat{\boldsymbol{\psi}} 
- \widehat{P}^{\text{div}}\widehat{\boldsymbol{\psi}})\cdot\widehat{\normal}\text{ on }\partial\widehat{K},\\
\label{p_prop2}
& \Vert \boldsymbol{p}\Vert_{H(\text{div},\widehat{K})}\rightarrow \min,\\
\label{p_prop3}
&\Vert \boldsymbol{q}\Vert_{H(\text{div},\widehat{K})} \leq 
C \Vert \boldsymbol{p}\Vert_{H(\text{div},\widehat{K})}.
\end{align}
\end{subequations}

Let $\boldsymbol{\varepsilon}_{\widehat{K}}^{\text{div}}$ be the polynomial extension operator 
in \cite[Theorem~$7.1$]{DemkoGopalSchob:2009:EOP3}. Then, by (\ref{p_props}), we have 
\begin{align*}
& \Vert \widehat{\Pi}_{\boldsymbol{V}}\widehat{\boldsymbol{\psi}} 
- \widehat{P}^{\text{div}}\widehat{\boldsymbol{\psi}}\Vert_{H(\text{div},\widehat{K})}
=\Vert \boldsymbol{q}\Vert_{H(\text{div},\widehat{K})} \leq C \Vert \boldsymbol{p}\Vert_{H(\text{div},\widehat{K})}\\
\leq & C \Vert \boldsymbol{\varepsilon}_{\widehat{K}}^{\text{div}} ( (\widehat{\Pi}_{\boldsymbol{V}}\widehat{\boldsymbol{\psi}} 
- \widehat{P}^{\text{div}}\widehat{\boldsymbol{\psi}})\cdot\widehat{\normal}|_{\partial\widehat{K}} )\Vert_{H(\text{div},\widehat{K})}\\
\leq & C \Vert (\widehat{\Pi}_{\boldsymbol{V}}\widehat{\boldsymbol{\psi}} 
- \widehat{P}^{\text{div}}\widehat{\boldsymbol{\psi}})\cdot\widehat{\normal}\Vert_{H^{-1/2}(\partial \widehat{K})}\\
\leq & C (\Vert (\widehat{P}^{\text{div}}_{\boldsymbol{V}}\widehat{\boldsymbol{\psi}} 
- \widehat{\boldsymbol{\psi}})\cdot\widehat{\normal}\Vert_{H^{-1/2}(\partial \widehat{K})}
+\Vert (\widehat{\Pi}_{\boldsymbol{V}}\widehat{\boldsymbol{\psi}} 
- \widehat{\boldsymbol{\psi}})\cdot\widehat{\normal}\Vert_{H^{-1/2}(\partial \widehat{K})})\\
\leq & C (\Vert \widehat{P}^{\text{div}}_{\boldsymbol{V}}\widehat{\boldsymbol{\psi}} 
- \widehat{\boldsymbol{\psi}}\Vert_{H(\text{div}, \widehat{K})}
+\Vert (\widehat{\Pi}_{\boldsymbol{V}}\widehat{\boldsymbol{\psi}} 
- \widehat{\boldsymbol{\psi}})\cdot\widehat{\normal}\Vert_{H^{-1/2}(\partial \widehat{K})})\\
\leq & C (\Vert \widehat{P}^{\text{div}}_{\boldsymbol{V}}\widehat{\boldsymbol{\psi}} 
- \widehat{\boldsymbol{\psi}}\Vert_{H(\text{div}, \widehat{K})}
+\Vert (\widehat{\Pi}_{\boldsymbol{V}}\widehat{\boldsymbol{\psi}} 
- \widehat{\boldsymbol{\psi}})\cdot\widehat{\normal}\Vert_{L^{2}(\partial \widehat{K})}).
\end{align*}
Then, by combining the above inequality with \cite[Theorem~$5.2$]{Demko:2008:PBI}, 
we can conclude that the proof is complete. So, we only need to show that 
the claims (\ref{p_props}) are true. 

It is easy to see that 
\begin{align*}
&\Vert \boldsymbol{q}\Vert_{H(\text{div},\widehat{K})}\\
\leq & C (\Vert \boldsymbol{q} - \boldsymbol{p}\Vert_{L^{2}(\widehat{K})} 
+ \Vert \boldsymbol{p}\Vert_{L^{2}(\widehat{K})}
 + \Vert \widehat{\nabla}\cdot \boldsymbol{q}\Vert_{L^{2}(\widehat{K})})\\
\leq & C  (\Vert \widehat{\nabla}\cdot (\boldsymbol{q} - \boldsymbol{p})\Vert_{L^{2}(\widehat{K})} 
+ \Vert \boldsymbol{p}\Vert_{L^{2}(\widehat{K})}
 + \Vert \widehat{\nabla}\cdot \boldsymbol{q}\Vert_{L^{2}(\widehat{K})}) 
 \text{ by } \text{\cite[Lemma~5.2 case 2]{Demko:2008:PBI}}\\
 \leq & C (\Vert \widehat{\nabla}\cdot (\boldsymbol{q} - \boldsymbol{p})\Vert_{L^{2}(\widehat{K})} 
+ \Vert \boldsymbol{p}\Vert_{L^{2}(\widehat{K})}
 + \Vert \widehat{\nabla}\cdot \boldsymbol{p}\Vert_{L^{2}(\widehat{K})})
 \text{ by the definition of } \boldsymbol{q}\\
 \leq & C (\Vert \widehat{\nabla}\cdot \boldsymbol{q}\Vert_{L^{2}(\widehat{K})} 
+\Vert \widehat{\nabla}\cdot \boldsymbol{p}\Vert_{L^{2}(\widehat{K})}
+ \Vert \boldsymbol{p}\Vert_{L^{2}(\widehat{K})})\\
\leq & C (\Vert \widehat{\nabla}\cdot \boldsymbol{p}\Vert_{L^{2}(\widehat{K})}
+ \Vert \boldsymbol{p}\Vert_{L^{2}(\widehat{K})}) \text{ by the definition of } \boldsymbol{q}\\
= & C \Vert \boldsymbol{p}\Vert_{H(\text{div},\widehat{K})}.
\end{align*}
Notice that for any $\widehat{\boldsymbol{\phi}} \in \boldsymbol{Q}_{p+1,0}(\widehat{K})$, we have 
\begin{align*}
(\boldsymbol{p} + \widehat{\nabla}\times\widehat{\boldsymbol{\phi}})\cdot\widehat{\normal} 
= \boldsymbol{p}\cdot\widehat{\normal} \text{ on }\partial\widehat{K},\qquad
\widehat{\nabla}\times\widehat{\boldsymbol{\phi}}\in \boldsymbol{RT}_{p+1}(\widehat{K}).
\end{align*}
So, we have 
\begin{align*}
\Vert \boldsymbol{p} + \widehat{\nabla}\times\widehat{\boldsymbol{\phi}}\Vert_{H(\text{div})(\widehat{K})}^{2} 
= \Vert \boldsymbol{p}\Vert_{H(\text{div})(\widehat{K})}^{2} 
+ \Vert \widehat{\nabla}\times\widehat{\boldsymbol{\phi}}\Vert_{H(\text{div})(\widehat{K})}^{2}.
\end{align*}
Thus, we can conclude that $\boldsymbol{p}$ satisfies the claims (\ref{p_props}).
\end{proof}

For any $\boldsymbol{\psi} \in H^{1}(\Omega;\mathbb{R}^{d})$, 
we define $\Pi_{\boldsymbol{V}} \boldsymbol{\psi}$ by
\begin{align}
\label{proj_div_domain}
& \Pi_{\boldsymbol{V}} \boldsymbol{\psi} (G_{K}(\widehat{x})) 
= (\det DG_{K}(\widehat{x}) )^{-1} DG_{K}(\widehat{x}) (\widehat{\Pi}_{\boldsymbol{V}}\widehat{\boldsymbol{\psi}})(\widehat{x})
\quad \forall  \widehat{x}\in\widehat{K},  K\in\mathcal{T}_{h},\\
\nonumber
& \text{where } \boldsymbol{\psi}(G_{K}(\widehat{x})) =  (\det DG_{K}(\widehat{x}))^{-1} DG_{K}(\widehat{x})  
\widehat{\boldsymbol{\psi}}(\widehat{x}).
\end{align}

\begin{lemma}
\label{lemma_proj_div_domain}
Let $\boldsymbol{\psi}\in C^{\infty}(\Omega;\mathbb{R}^{d})$ satisfy
\begin{align*}
& \Vert \nabla^{p} \boldsymbol{\psi}\Vert_{L^{2}(\Omega)}
\leq \gamma^{p}\max (p, k)^{p}C_{\boldsymbol{\psi}}\quad \forall p\in \mathbb{N}_{0}.
\end{align*}
Here, $C_{\boldsymbol{\psi}}$ and $\gamma$ are independent of $k$, $h$ and $p$. 
Then, there are $C,\sigma>0$ which are also independent of $k$, $h$ and $p$, such that 
\begin{align*}
k\Vert  \Pi_{\boldsymbol{V}} \boldsymbol{\psi} - \boldsymbol{\psi}\Vert_{L^{2}(\Omega)} 
\leq C k(h(p+2))^{d}C_{\boldsymbol{\psi}}\left[\left(\dfrac{h}{h+\sigma} \right)^{p+2} 
+ \left( \dfrac{kh}{\sigma p} \right)^{p+2} \right],\\
\Vert \nabla\cdot ( \Pi_{\boldsymbol{V}} \boldsymbol{\psi} - \boldsymbol{\psi}) \Vert_{L^{2}(\Omega)} 
\leq C h^{-1}(h(p+2))^{d}C_{\boldsymbol{\psi}}\left[\left(\dfrac{h}{h+\sigma} \right)^{p+2} 
+ \left( \dfrac{kh}{\sigma p} \right)^{p+2} \right],\\
k^{1/2}\Vert  (\Pi_{\boldsymbol{V}} \boldsymbol{\psi} - \boldsymbol{\psi})\cdot\normal\Vert_{L^{2}(\partial\Omega)} 
\leq C k^{1/2}h^{-1/2}(h(p+2))^{d}C_{\boldsymbol{\psi}}\left[\left(\dfrac{h}{h+\sigma} \right)^{p+2} 
+ \left( \dfrac{kh}{\sigma p} \right)^{p+2} \right].
\end{align*}
\end{lemma}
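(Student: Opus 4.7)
The plan is to work element by element. I would use the Piola-transform definition of $\Pi_{\boldsymbol{V}}$ in (\ref{proj_div_domain}) to transfer each local question to the reference cell $\widehat{K}$, apply Lemma~\ref{lemma_proj_div_reference} there, push the resulting best-approximation terms back to $K$, and finally bound them by invoking polynomial approximation theory for functions satisfying analytic-type derivative bounds (as in \cite{MelenkSIAM11,MelenkMC10}).

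First I would fix $K\in\mathcal{T}_{h}$. Because $\{\mathcal{T}_{h}\}$ is quasi-uniform regular, the Piola transform associated with $G_{K}$ preserves the $H(\text{div})$ structure with scaling factors $h^{d/2-1}$ in $L^{2}(K)$, $h^{d/2-1}$ in the divergence, and $h^{(d-1)/2-1}$ in the normal-trace $L^{2}(\partial K)$ norm; all constants depend only on $C_{G}$ and $\gamma$ from Definition~\ref{k_regular_meshes}. Applying Lemma~\ref{lemma_proj_div_reference} on $\widehat{K}$ to $\widehat{\boldsymbol{\psi}}$ and transferring back gives, on each $K$,
\begin{align*}
\Vert \Pi_{\boldsymbol{V}}\boldsymbol{\psi}-\boldsymbol{\psi}\Vert_{H(\mathrm{div},K)}
\leq C\bigl(\inf_{\boldsymbol{\phi}\in \boldsymbol{V}_{h}(K)} \Vert \boldsymbol{\psi}-\boldsymbol{\phi}\Vert_{H(\mathrm{div},K)}
+ h^{1/2}\inf_{\boldsymbol{\varphi}\in \boldsymbol{V}_{h}(K)}\Vert (\boldsymbol{\psi}-\boldsymbol{\varphi})\cdot\normal\Vert_{L^{2}(\partial K)}\bigr),
\end{align*}
and a corresponding separate bound for $\Vert(\Pi_{\boldsymbol{V}}\boldsymbol{\psi}-\boldsymbol{\psi})\cdot\normal\Vert_{L^{2}(\partial K)}$ via the normal-trace part of Lemma~\ref{lemma_proj_div_reference} and the commutativity (\ref{commutativity_div_reference}).

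Next I would use the analytic-type hypothesis $\Vert\nabla^{p}\boldsymbol{\psi}\Vert_{L^{2}(\Omega)}\leq\gamma^{p}\max(p,k)^{p}C_{\boldsymbol{\psi}}$ together with the standard construction of a polynomial approximant of degree $p+1$ by a tensorized Taylor/Chebyshev expansion on the pulled-back domain, combined with Stirling's formula. This is exactly the setting of \cite[Theorem~5.5]{MelenkMC10} (the analogous RT-best-approximation statement), which yields
\begin{align*}
\inf_{\boldsymbol{\phi}\in \boldsymbol{V}_{h}(K)}\Vert \boldsymbol{\psi}-\boldsymbol{\phi}\Vert_{L^{2}(K)}
+ h \inf_{\boldsymbol{\phi}\in \boldsymbol{V}_{h}(K)}\Vert \nabla\cdot(\boldsymbol{\psi}-\boldsymbol{\phi})\Vert_{L^{2}(K)}
\leq C (h(p+2))^{d/2}C_{\boldsymbol{\psi}}\Bigl[\Bigl(\tfrac{h}{h+\sigma}\Bigr)^{p+2}+\Bigl(\tfrac{kh}{\sigma p}\Bigr)^{p+2}\Bigr]
\end{align*}
on each element, with $\sigma$ depending only on $\gamma$ and the regularity constants. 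Summing the squared contributions over $K\in \mathcal{T}_{h}$ produces an extra $h^{-d/2}(h(p+2))^{d/2}$ factor when one passes from a single-element bound to a global one written with the prefactor $(h(p+2))^{d}$; the divergence bound follows because $\nabla\cdot\boldsymbol{\psi}$ inherits the same analytic-type estimate as $\boldsymbol{\psi}$ (with the same $\gamma$ up to a constant), so the same approximation rate applies to $\nabla\cdot(\Pi_{\boldsymbol{V}}\boldsymbol{\psi}-\boldsymbol{\psi})$.

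For the boundary estimate I would use a multiplicative trace inequality, which contributes a factor $h^{-1/2}$, and then the normal-trace portion of Lemma~\ref{lemma_proj_div_reference} together with the same analytic approximation result. Multiplying each of the three resulting inequalities by $k$, $1$, and $k^{1/2}$ respectively gives the claim. The main obstacle, I expect, is the careful accounting of constants through the Piola transform on curved elements while retaining the full $(h/(h+\sigma))^{p+2}+(kh/(\sigma p))^{p+2}$ decay with $\sigma$ independent of $k,h,p$: one needs the bounds on $\Vert\nabla^{i}G_{K}\Vert\leq C_{G}h^{i}\gamma^{i}i!$ from Definition~\ref{k_regular_meshes} to interact cleanly with the factorial growth of the derivatives of $\boldsymbol{\psi}$ in the Stirling/Taylor estimate, which is exactly the scenario tailored for in the quasi-uniform regular setting adopted from \cite{MelenkMC10}.
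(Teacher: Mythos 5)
Your proposal follows essentially the same route as the paper's proof: localize the global analytic-type derivative bound to each element, pull back to $\widehat{K}$ via the Piola transform, invoke Lemma~\ref{lemma_proj_div_reference} together with the commutativity relations (\ref{commutativity_div_reference}), and close with the Melenk--Sauter polynomial approximation machinery for functions with factorial derivative growth, summing the squared element contributions. The only cosmetic differences are that the paper handles the boundary term directly through the face-wise $L^2$-projection identity in (\ref{commutativity_div_reference}) and the $W^{1,\infty}$ reference-element approximation bound rather than through a multiplicative trace inequality, and that your exponent bookkeeping in the Piola scalings and in the element summation is slightly off, though it corrects itself in a careful write-up.
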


\begin{proof}
We follow the proof of \cite[Theorem~$5.5$]{MelenkMC10}.
We start by defining for each $K\in\mathcal{T}_{h}$ the constant $C_{K}$ by
\begin{align}
\label{one_element_term}
C_{K}^{2} = \Sigma_{p\in \mathbb{N}_{0}}\dfrac{ \Vert \nabla^{p} \boldsymbol{\psi}\Vert_{L^{2}(K)}^{2}  }
{(2\gamma \max (p, k))^{2p}}.
\end{align}
It is easy to see that 
\begin{subequations}
\label{one_element_grows}
\begin{align}
\label{one_element_grow1}
& \Vert \nabla^{p} \boldsymbol{\psi}\Vert_{L^{2}(K)}\leq (2\gamma \max (p, k))^{p} 
C_{K}\quad \forall p\in\mathbb{N}_{0},\\
\label{one_element_grow2}
& \Sigma_{K\in\mathcal{T}_{h}} C_{K}^{2} \leq \frac{4C}{3} C_{\boldsymbol{\psi}}^{2}.
\end{align}
\end{subequations}
We choose $K\in\mathcal{T}_{h}$ arbitrarily. We define 
\begin{align*}
& A(\widehat{x}) = DG_{K}(\widehat{x})\quad\forall \widehat{x}\in \widehat{K},\\
& (\det A)^{-1}(\widehat{x}) A(\widehat{x}) \widehat{\boldsymbol{\psi}}(\widehat{x}) 
= \boldsymbol{\psi}(G_{K}(\widehat{x})).
\end{align*}
Let $\Pi_{\boldsymbol{V}}$ be the projection defined in (\ref{proj_div_domain}). 
Then by standard change of variable, we have 
\begin{align*}
& \Vert \Pi_{\boldsymbol{V}}\boldsymbol{\psi} - \boldsymbol{\psi} \Vert_{L^{2}(K)}  \leq C (\Vert 
\det A^{-1}\Vert_{L^{\infty}(\widehat{K})})^{1/2} \Vert A\Vert_{L^{\infty}(\widehat{K})}\Vert \widehat{\Pi}_{\boldsymbol{V}}\widehat{\boldsymbol{\psi}} - \widehat{\boldsymbol{\psi}}\Vert_{L^{2}(\widehat{K})},\\
& \Vert \nabla\cdot \Pi_{\boldsymbol{V}}\boldsymbol{\psi} - \nabla\cdot \boldsymbol{\psi}\Vert_{L^{2}(K)} 
\leq C (\Vert \det A^{-1}\Vert_{L^{\infty}(\widehat{K})})^{1/2}
\Vert \widehat{P}(\widehat{\nabla}\cdot \widehat{\boldsymbol{\psi}}) 
- \widehat{\nabla}\cdot \widehat{\boldsymbol{\psi}}\Vert_{L^{2}(\widehat{K})},\\
& \Vert (\Pi_{\boldsymbol{V}}\boldsymbol{\psi} - \boldsymbol{\psi})\cdot\normal \Vert_{L^{2}(\partial K)}  
\leq C  \Vert A\Vert_{L^{\infty}(\widehat{K})}^{(d-1)/2}\Sigma_{\widehat{F}\in \bigtriangleup_{d-1}(\widehat{K})}
\Vert \widehat{P}_{\widehat{F}}(\widehat{\boldsymbol{\psi}}\cdot\widehat{\normal}) 
- \widehat{\boldsymbol{\psi}}\cdot\widehat{\normal}\Vert_{L^{2}(\widehat{F})}.
\end{align*}
$\widehat{P}$ and $\widehat{P}_{\widehat{F}}$ are the standard $L^{2}$-orthogonal projections 
onto $P_{p+1}(\widehat{K})$ and $P_{p+1}(\widehat{F})$, respectively. The last two inequalities above 
is due to (\ref{commutativity_div_reference}) and the fact that  Piola transform commutes with 
both divergence operator and trace of normal component. Then, by the properties of matrix $A$ 
in Definition~\ref{k_regular_meshes} and Lemma~\ref{lemma_proj_div_reference}, we have 
\begin{subequations}
\label{one_element_apps}
\begin{align}
\label{one_element_app1}
& \Vert \Pi_{\boldsymbol{V}}\boldsymbol{\psi} - \boldsymbol{\psi} \Vert_{L^{2}(K)}  \leq C h^{1-d/2}\Vert \widehat{\Pi}_{\boldsymbol{V}}\widehat{\boldsymbol{\psi}} - \widehat{\boldsymbol{\psi}}\Vert_{L^{2}(\widehat{K})}\\
\nonumber
\leq & C h ^{1-d/2}\left( \inf_{\widehat{\boldsymbol{\phi}}\in \boldsymbol{RT}_{p+1}(\widehat{K})} 
\Vert \widehat{\boldsymbol{\psi}} - \widehat{\boldsymbol{\phi}} \Vert_{H(\text{div}, \hat{K})} 
+ \inf_{\widehat{\boldsymbol{\varphi}}\in \boldsymbol{RT}_{p+1}(\widehat{K})} 
\Vert (\widehat{\boldsymbol{\psi}} - \widehat{\boldsymbol{\varphi}} )\cdot\widehat{\normal} 
\Vert_{L^{2}(\partial\widehat{K})}\right)\\
\label{one_element_app2}
& \Vert \nabla\cdot \Pi_{\boldsymbol{V}}\boldsymbol{\psi} - \nabla\cdot \boldsymbol{\psi}\Vert_{L^{2}(K)} 
\leq C h^{-d/2}\Vert \widehat{P}(\widehat{\nabla}\cdot \widehat{\boldsymbol{\psi}}) 
- \widehat{\nabla}\cdot \widehat{\boldsymbol{\psi}}\Vert_{L^{2}(\widehat{K})}\\
\nonumber
\leq & C h^{-d/2}\inf_{\widehat{\boldsymbol{\phi}}\in \boldsymbol{RT}_{p+1}(\widehat{K})} 
\Vert \widehat{\nabla}\cdot (\widehat{\boldsymbol{\psi}} - \widehat{\boldsymbol{\phi}}) \Vert_{L^{2}(\hat{K})} 
= C h^{-d/2}\inf_{\widehat{v}\in \boldsymbol{P}_{p+1}(\widehat{K})} 
\Vert \widehat{\nabla}\cdot \widehat{\boldsymbol{\psi}} - \widehat{v} \Vert_{L^{2}(\hat{K})},\\
\label{one_element_app3}
& \Vert (\Pi_{\boldsymbol{V}}\boldsymbol{\psi} - \boldsymbol{\psi} ) \cdot \normal\Vert_{L^{2}(\partial K)} 
\leq C h^{(1-d)/2}\Sigma_{\widehat{F}\in \bigtriangleup_{d-1}(\widehat{K})}
\Vert\widehat{P}_{\widehat{F}}(\widehat{\boldsymbol{\psi}}\cdot\widehat{\normal})
- \widehat{\boldsymbol{\psi}}\cdot\boldsymbol{\normal}\Vert_{L^{2}(\widehat{F})}\\
\nonumber
\leq & C h^{(1-d)/2}\Sigma_{\widehat{F}\in\bigtriangleup_{d-1}(\widehat{K})}
\inf_{\widehat{v}\in \boldsymbol{P}_{p+1}(\widehat{F})} 
\Vert \widehat{\boldsymbol{\psi}}\cdot \widehat{\normal} - \widehat{v} \Vert_{L^{2}(\widehat{F})}.
\end{align}
\end{subequations}

By the definition of $\widehat{\boldsymbol{\psi}}$, we have 
\begin{align*}
\widehat{\boldsymbol{\psi}}(\widehat{x}) = \text{adj} A(\widehat{x})\boldsymbol{\psi}(G_{K}(\widehat{x}))
\quad \forall \widehat{x}\in \widehat{K}.
\end{align*}
Here, $\text{adj}A$ is the adjoint matrix of $A$. By the properties of matrix $A$ in Definition~\ref{k_regular_meshes} 
and \cite[Lemma~$A.1.3$]{MelenkBook02}, we have 
\begin{align}
\label{adj_app}
\Vert \widehat{\nabla}^{p}\text{adj}A\Vert_{L^{\infty}(\widehat{K})}
\leq C h^{p+d-1}\gamma^{p+d-1} (p+d-1)!\quad \forall p\in \mathbb{N}_{0}.
\end{align}
By (\ref{one_element_grow1}), the properties of matrix $A$ in Definition~\ref{k_regular_meshes} 
and \cite[Lemma~$C.1$]{MelenkMC10}, we have 
\begin{align*}
\Vert \widehat{\nabla}^{p}(\boldsymbol{\psi}\circ G_{K}) \Vert_{L^{2}(\widehat{K})} 
\leq C h^{p+d/2}\gamma_{1}^{p} \max (p, k)^{p} C_{K}\quad \forall p\in \mathbb{N}_{0}. 
\end{align*}
Here, $\gamma_{1}>0$ is independent of $h$, $k$ and $p$. Then, by combining the above inequality with (\ref{adj_app}) 
and applying \cite[Lemma~$A.1.3$]{MelenkBook02} again, we have 
\begin{align}
\label{semi_norm_p}
\Vert \widehat{\nabla}^{p} \widehat{\boldsymbol{\psi}}\Vert_{L^{2}(\widehat{K})} 
\leq C h^{3d/2-1}(p\cdots (p+d-1))(h\gamma)^{p}\max (p, k)^{p} C_{K}.
\end{align}
Here, the constant $C$ is independent of $h$, $k$ and $p$. Now, we can apply \cite[Lemma~$C.2$]{MelenkMC10} 
with $R=1$ and $C_{u} = C h^{3d/2-1}(p\cdots (p+d-1)) C_{K}$, such that 
\begin{align}
\label{infty_norm_app}
& \inf_{\widehat{\boldsymbol{\phi}}\in P_{p+1}(\widehat{K};\mathbb{R}^{d})}  
\Vert \widehat{\boldsymbol{\psi}} - \widehat{\boldsymbol{\phi}} \Vert_{W^{1,\infty}(\hat{K})} \\
\nonumber
\leq & C h^{3d/2-1}(p\cdots (p+d-1)) C_{K}\left[\left(\dfrac{h}{h+\sigma} \right)^{p+2} 
+ \left( \dfrac{kh}{\sigma p} \right)^{p+2} \right]\\
\nonumber
\leq & C h^{3d/2-1}(p+2)^{d} C_{K}\left[\left(\dfrac{h}{h+\sigma} \right)^{p+2} 
+ \left( \dfrac{kh}{\sigma p} \right)^{p+2} \right].
\end{align}

According to (\ref{one_element_app1}) and (\ref{infty_norm_app}), we have 
\begin{align*}
\Vert \Pi_{\boldsymbol{V}}\boldsymbol{\psi} - \boldsymbol{\psi} \Vert_{L^{2}(K)} 
\leq C (h(p+2))^{d}C_{K}\left[\left(\dfrac{h}{h+\sigma} \right)^{p+2} 
+ \left( \dfrac{kh}{\sigma p} \right)^{p+2} \right].
\end{align*}
Then, by (\ref{one_element_grow2}), we have 
\begin{align*}
\Vert  \Pi_{\boldsymbol{V}} \boldsymbol{\psi} - \boldsymbol{\psi}\Vert_{L^{2}(\Omega)} 
\leq C (h(p+2))^{d}C_{\boldsymbol{\psi}}\left[\left(\dfrac{h}{h+\sigma} \right)^{p+2} 
+ \left( \dfrac{kh}{\sigma p} \right)^{p+2} \right].
\end{align*}
We notice that $\widehat{\nabla}\cdot \widehat{\boldsymbol{\psi}} = \det A \nabla\cdot\boldsymbol{\psi}$. 
Then, by similar argument, we have the estimate of 
$\Vert \nabla\cdot ( \Pi_{\boldsymbol{V}} \boldsymbol{\psi} - \boldsymbol{\psi}) \Vert_{L^{2}(\Omega)}$. 
By (\ref{one_element_app3}) and (\ref{infty_norm_app}), we have the estimate of 
$\Vert  (\Pi_{\boldsymbol{V}} \boldsymbol{\psi} - \boldsymbol{\psi})\cdot\normal\Vert_{L^{2}(\partial\Omega)} $.
So, we can conclude that the proof is complete.
\end{proof}

\begin{lemma}
\label{lemma_div_h2}
Let $\boldsymbol{\psi}\in \{ \boldsymbol{\varphi}\in H^{1}(\Omega; \mathbb{R}^{d}): 
\nabla\cdot \boldsymbol{\varphi}\in H^{1}(\Omega) \}$ satisfy
\begin{align*}
k^{2}\Vert \boldsymbol{\psi}\Vert_{L^{2}(\Omega)} + k \Vert \boldsymbol{\psi}\Vert_{H^{1}(\Omega)} 
+ \Vert \nabla\cdot \boldsymbol{\psi}\Vert_{H^{1}(\Omega)} \leq C C_{\boldsymbol{\psi}}. 
\end{align*}
Then, there exists $\boldsymbol{\psi}_{h}\in\boldsymbol{V}_{h}$ such that  
\begin{align*}
&  k\Vert \boldsymbol{\psi} 
- \boldsymbol{\psi}_{h}\Vert_{L^{2}(\Omega)}
+\Vert \nabla\cdot ( \boldsymbol{\psi} - \boldsymbol{\psi}_{h} )\Vert_{L^{2}(\Omega)} 
\leq C h C_{\boldsymbol{\psi}},\\
& k^{1/2}\Vert (\boldsymbol{\psi} 
- \boldsymbol{\psi}_{h})\cdot\normal\Vert_{L^{2}(\partial\Omega)} 
\leq C h^{1/2}k^{-1/2} C_{\boldsymbol{\psi}}.
\end{align*}
\end{lemma}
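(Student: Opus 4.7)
The plan is to set $\boldsymbol{\psi}_h := \Pi_{\boldsymbol{V}} \boldsymbol{\psi}$ with $\Pi_{\boldsymbol{V}}$ the projection defined in (\ref{proj_div_domain}), and to derive the three claimed bounds from standard first-order ($h$-only) approximation estimates. Unlike Lemma~\ref{lemma_proj_div_domain}, the hypothesis here supplies only $H^1$-regularity of $\boldsymbol{\psi}$ and $\nabla\cdot\boldsymbol{\psi}$, so no $p$-refinement is invoked and the analytic machinery is not needed; we only need the inclusion $\boldsymbol{RT}_0\subset\boldsymbol{RT}_{p+1}$.

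First I would work on the reference element. By Lemma~\ref{lemma_proj_div_reference}, the $H(\text{div})$-error of $\widehat{\Pi}_{\boldsymbol{V}}$ is controlled by the best $\boldsymbol{RT}_{p+1}$-approximation of $\widehat{\boldsymbol{\psi}}$ in $H(\text{div},\widehat{K})$ plus the best normal-trace approximation on $\partial\widehat{K}$; since $p\geq 0$, $\boldsymbol{RT}_0$ is admissible in the infima and the Deny--Lions lemma on the fixed domain $\widehat{K}$ yields
$$\|\widehat{\Pi}_{\boldsymbol{V}}\widehat{\boldsymbol{\psi}}-\widehat{\boldsymbol{\psi}}\|_{L^2(\widehat{K})}\leq C\|\widehat{\boldsymbol{\psi}}\|_{H^1(\widehat{K})}.$$
The commutativity identities (\ref{commutativity_div_reference}) reduce the divergence error to $(I-\widehat{P})(\widehat{\nabla}\cdot\widehat{\boldsymbol{\psi}})$ and, face by face, the normal-trace error to $(I-\widehat{P}_{\widehat{F}})(\widehat{\boldsymbol{\psi}}\cdot\widehat{\normal})$. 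A second application of Deny--Lions, using the $H^1$-regularity of $\widehat{\nabla}\cdot\widehat{\boldsymbol{\psi}}$ and of $\widehat{\boldsymbol{\psi}}$, produces
$$\|\widehat{\nabla}\cdot(\widehat{\Pi}_{\boldsymbol{V}}\widehat{\boldsymbol{\psi}}-\widehat{\boldsymbol{\psi}})\|_{L^2(\widehat{K})}\leq C\|\widehat{\nabla}\cdot\widehat{\boldsymbol{\psi}}\|_{H^1(\widehat{K})},\qquad \|(\widehat{\Pi}_{\boldsymbol{V}}\widehat{\boldsymbol{\psi}}-\widehat{\boldsymbol{\psi}})\cdot\widehat{\normal}\|_{L^2(\partial\widehat{K})}\leq C\|\widehat{\boldsymbol{\psi}}\|_{H^1(\widehat{K})}.$$

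Next I would pull these back to each $K\in\mathcal{T}_h$ via the Piola transform. The quasi-uniform regularity in Definition~\ref{k_regular_meshes} controls the scaling exactly as in (\ref{one_element_app1})--(\ref{one_element_app3}), so that the element-wise and face-wise bounds
$$\|\Pi_{\boldsymbol{V}}\boldsymbol{\psi}-\boldsymbol{\psi}\|_{L^2(K)}\leq Ch\|\boldsymbol{\psi}\|_{H^1(K)},\quad \|\nabla\cdot(\Pi_{\boldsymbol{V}}\boldsymbol{\psi}-\boldsymbol{\psi})\|_{L^2(K)}\leq Ch\|\nabla\cdot\boldsymbol{\psi}\|_{H^1(K)},$$
$$\|(\Pi_{\boldsymbol{V}}\boldsymbol{\psi}-\boldsymbol{\psi})\cdot\normal\|_{L^2(F)}\leq Ch^{1/2}\|\boldsymbol{\psi}\|_{H^1(K)}\qquad \forall F\subset\partial K$$
follow. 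Summing over elements (respectively boundary faces) and inserting the hypothesis $k^2\|\boldsymbol{\psi}\|_{L^2(\Omega)}+k\|\boldsymbol{\psi}\|_{H^1(\Omega)}+\|\nabla\cdot\boldsymbol{\psi}\|_{H^1(\Omega)}\leq CC_{\boldsymbol{\psi}}$ gives the claims immediately: multiplying the $L^2$ estimate by $k$ and using $k\|\boldsymbol{\psi}\|_{H^1}\leq CC_{\boldsymbol{\psi}}$ produces $ChC_{\boldsymbol{\psi}}$; the divergence bound is controlled directly by $\|\nabla\cdot\boldsymbol{\psi}\|_{H^1}\leq CC_{\boldsymbol{\psi}}$; the boundary prefactor $k^{1/2}h^{1/2}$ combined with $\|\boldsymbol{\psi}\|_{H^1}\leq Ck^{-1}C_{\boldsymbol{\psi}}$ yields $Ch^{1/2}k^{-1/2}C_{\boldsymbol{\psi}}$.

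The main technical point is purely bookkeeping: tracking the correct powers of $h$ through the Piola transform so that the normal-trace estimate picks up $h^{1/2}$ rather than $h$ (which is where the differing boundary exponent in the final claim originates), and checking that Deny--Lions on $\widehat{K}$ is applicable to the pull-back $\widehat{\nabla}\cdot\widehat{\boldsymbol{\psi}}=(\det DG_K)\,\nabla\cdot\boldsymbol{\psi}\circ G_K$, which inherits $H^1$-regularity from $\nabla\cdot\boldsymbol{\psi}$ thanks to the smoothness of $G_K$ guaranteed by Definition~\ref{k_regular_meshes}.
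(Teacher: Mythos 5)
Your proof is correct and runs on the same engine as the paper's --- a commuting $H(\mathrm{div})$ interpolant, a Deny--Lions argument on the reference element, and Piola scaling --- but you instantiate it with a different interpolant. The paper does not reuse $\Pi_{\boldsymbol{V}}$ from (\ref{proj_div_domain}); it introduces the classical lowest-order Raviart--Thomas interpolant $\widehat{\Pi}_{\boldsymbol{RT}}^{0}$ (well defined on $H^{1}$ fields), notes that it commutes with the divergence and the normal trace exactly as in (\ref{commutativity_div_reference}) but with the $L^{2}$-projections onto $P_{0}(\widehat{K})$ and $P_{0}(\widehat{F})$, maps it to each element by the Piola transform, and invokes its standard first-order approximation properties. (The remark following the lemma explains the choice: no $p$-robust $H(\mathrm{div})$ projection with a $p^{-1/2}$ gain exists on simplices, so only an $h$-rate is available either way.) Your route through $\Pi_{\boldsymbol{V}}$ works, but it additionally leans on the $p$-uniformity of the quasi-optimality constant in Lemma~\ref{lemma_proj_div_reference}, so that restricting the infima to the lowest-order subspace of $\boldsymbol{RT}_{p+1}(\widehat{K})$ yields $p$-independent bounds; the paper's fixed low-order interpolant sidesteps that question entirely. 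One small imprecision: the displayed bound $\Vert\widehat{\Pi}_{\boldsymbol{V}}\widehat{\boldsymbol{\psi}}-\widehat{\boldsymbol{\psi}}\Vert_{L^{2}(\widehat{K})}\le C\Vert\widehat{\boldsymbol{\psi}}\Vert_{H^{1}(\widehat{K})}$ with the full norm does not by itself scale to $O(h)$ under the Piola pull-back; you need the Deny--Lions form with the infimum over constants (i.e.\ the $H^{1}$ seminorm, up to the lower-order terms coming from the curvature of $G_{K}$), which is what your stated element-wise estimates actually encode --- a notational slip, not a gap.
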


\begin{proof}
Let $\widehat{\Pi}_{\boldsymbol{RT}}^{0}$ be the lowest order standard Raviart-Thomas projection on $\widehat{K}$. 
We notice that for any $\widehat{\boldsymbol{\psi}} \in H^{1}(\widehat{K};\mathbb{R}^{d})$
\begin{align*}
\widehat{\nabla}\cdot \widehat{\Pi}_{\boldsymbol{RT}}^{0}\widehat{\boldsymbol{\psi}} 
= \widehat{P}_{0} \widehat{\nabla}\cdot \widehat{\boldsymbol{\psi}},\qquad
(\widehat{\Pi}_{\boldsymbol{RT}}^{0}\widehat{\boldsymbol{\psi}}) \cdot \widehat{\normal} 
= \widehat{P}_{0,\widehat{F}} (\widehat{\boldsymbol{\psi}}\cdot\boldsymbol{\normal}) 
\quad\forall \widehat{F}\in\bigtriangleup_{d-1}(\widehat{K}). 
\end{align*}
Here, $\widehat{P}_{0}$ and $\widehat{P}_{0,\widehat{F}}$ are standard $L^{2}$-orthogonal projection 
onto $P_{0}(\widehat{K})$ and $P_{0}(\widehat{F})$, respectively. 
Then, we define $\Pi_{RT}^{0}$ in the same way as $\Pi_{\boldsymbol{V}}$ in (\ref{proj_div_domain}) except that 
we replace $\widehat{\Pi}_{\boldsymbol{V}}$ by $\widehat{\Pi}_{\boldsymbol{RT}}^{0}$. According to the fact that 
the Piola transform commutes with both divergence operator and trace of normal component, we immediately have 
\begin{align*}
& k\Vert \Pi_{RT}^{0}\boldsymbol{\psi} - \boldsymbol{\psi} \Vert_{L^{2}(\Omega)} 
+ \Vert \nabla\cdot (\Pi_{RT}^{0}\boldsymbol{\psi} - \boldsymbol{\psi}) 
\Vert_{L^{2}(\Omega)} \leq C h C_{\boldsymbol{\psi}},\\
& k^{1/2}\Vert (\Pi_{RT}^{0}\boldsymbol{\psi} - \boldsymbol{\psi})\cdot\normal \Vert_{L^{2}(\partial \Omega)} 
\leq C h^{1/2} k ^{-1/2} C_{\boldsymbol{\psi}}.
\end{align*}
So, we can conclude that the proof is complete.
\end{proof}

\begin{remark}
We notice that in \cite[Theorem~$3.5$]{Monk92}, it is shown that on a reference cube, the standard 
$H(\text{div})$-conforming {\Nedelec} projection $\widehat{\pi}_{D}$ has the following approximation property
\begin{align}
\label{rt_proj_cube}
\Vert \widehat{\pi}_{D} \boldsymbol{\psi} - \boldsymbol{\psi} \Vert_{L^{2}(\widehat{K})} 
\leq C p^{-1/2} \Vert \boldsymbol{\psi}\Vert_{H^{1}(\widehat{K})}.
\end{align}
However, there is no $H(\text{div})$-conforming projection on triangle or tetrahedron elements satisfying (\ref{rt_proj_cube}).
This is the reason why we use the lowest order standard Raviart-Thomas projection on $\widehat{K}$. If there is 
a $H(\text{div})$-conforming projection on triangular meshes satisfying (\ref{rt_proj_cube}), then the convergent result 
of Theorem~\ref{main_res} can be improved as 
\begin{align*}
& \Vert u-u_{h} \Vert_{L^{2}(\Omega)} \\
\leq & C h p^{-1/2} \left( k\Vert \boldsymbol{\phi} - \boldsymbol{\psi}_{h}\Vert_{L^{2}(\Omega)} 
+ \Vert\nabla\cdot (\boldsymbol{\phi} - \boldsymbol{\psi}_{h}) \Vert_{L^{2}(\Omega)} 
+ \Vert \nabla (u - v_{h})\Vert_{L^{2}(\Omega)} 
+ k \Vert u - v_{h}\Vert_{L^{2}(\Omega)} \right)\\
& \quad + C h^{1/2}p^{-1/2} \Vert (\boldsymbol{\phi} - \boldsymbol{\psi}_{h})\cdot\normal\Vert_{L^{2}(\partial\Omega)},
\end{align*}
for any $(\boldsymbol{\psi}_{h}, v_{h})\in \boldsymbol{V}_{h}\times W_{h}$.
\end{remark}

We define two projections $\Pi_{W}, \tilde{\Pi}_{W}: H^{2}(\Omega)\rightarrow W_{h}$ by
\begin{align}
\label{h1_projs}
\left( \Pi_{W} v \right)|_{K} = \pi \left( v \circ G_{K}^{-1}\right)\quad 
\left( \tilde{\Pi}_{W} v \right)|_{K} = \tilde{\pi} \left( v \circ G_{K}^{-1}\right)
\quad \forall K\in\mathcal{T}_{h},  
\end{align}
where $\pi$ and $\tilde{\pi}$ are the projections from $H^{2}(\widehat{K})$ onto $P_{p+1}(\widehat{K})$ 
introduced in \cite[Theorem~$B.4$, Lemma~$C.3$]{MelenkMC10}, respectively. 
%We emphasize that $\pi$ can be replaced by 
%the projection based on the interpolation introduced in \cite{DemkoGopalSchob:2008:EOP1}.

\begin{lemma}
\label{lemma_h1_app}
Let $v_{H^{2}}\in H^{2}(\Omega)$ and $v_{A}$ is an analytic function in $\Omega$.
We assume that there are constants $C_{v},\gamma>0$ such that 
\begin{align*}
\Vert v_{H^{2}} \Vert_{H^{2}(\Omega)} \leq C_{v} \quad 
\Vert \nabla^{p} v_{A}\Vert_{L^{2}(\Omega)} \leq \gamma^{p} \max (p, k)^{p} C_{v}.
\end{align*}
Then, there are constants $C,\sigma>0$ independent of $h,k$ and $p$ such that 
\begin{align*}
& k\Vert \tilde{\Pi}_{W} v_{A} - v_{A} \Vert_{L^{2}(\Omega)} + \Vert  \tilde{\Pi}_{W} v_{A} - v_{A}\Vert_{H^{1}(\Omega)}
+ k^{1/2}\Vert  \tilde{\Pi}_{W} v_{A} - v_{A}\Vert_{H^{1/2}(\partial\Omega)} \\
\leq & C \left[ \left( \dfrac{h}{h+\sigma} \right)^{p}\left(1 + \dfrac{hk}{h+\sigma} \right) 
+ k\left( \dfrac{kh}{\sigma p}\right)^{p}\left(\frac{1}{p} + \dfrac{kh}{\sigma p} \right)  \right]C_{v},\\
& k\Vert \Pi_{W} v_{H^{2}} - v_{H^{2}} \Vert_{L^{2}(\Omega)} + \Vert  \Pi_{W} v_{H^{2}} - v_{H^{2}}\Vert_{H^{1}(\Omega)}
+ k^{1/2}\Vert  \Pi_{W} v_{H^{2}} - v_{H^{2}}\Vert_{H^{1/2}(\partial\Omega)}\\
\leq & C k^{-1}\left( \dfrac{kh}{p} +\left( \dfrac{kh}{p}\right)^{2} \right)C_{v}.
\end{align*}
\end{lemma}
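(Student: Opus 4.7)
The plan is to reduce the global estimate to element-by-element estimates on the reference cell, in exactly the same way as the proof of Lemma~\ref{lemma_proj_div_domain}, and then invoke the reference-element approximation results of \cite[Theorem~B.4 and Lemma~C.3]{MelenkMC10}. The statement splits naturally into two parts: the bound for the analytic function $v_A$ via $\tilde\Pi_W$, and the bound for $v_{H^2}$ via $\Pi_W$; these are essentially independent, so I would treat them separately.

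For the analytic part, first I would fix $K \in \mathcal{T}_h$ and set $\widehat{v}_A = v_A \circ G_K$ on $\widehat{K}$. Using the quasi-uniform regularity bounds on $G_K$ from Definition~\ref{k_regular_meshes} together with the chain rule bounds of \cite[Lemma~C.1]{MelenkMC10}, I would show that the derivatives of $\widehat{v}_A$ inherit the factorial-type growth
\[
\|\widehat{\nabla}^p \widehat{v}_A\|_{L^2(\widehat{K})} \leq C\, h^{d/2} \gamma_1^p \max(p,k)^p C_{v,K},
\]
where $C_{v,K}^2 = \sum_{p\geq 0}\|\nabla^p v_A\|_{L^2(K)}^2/(2\gamma\max(p,k))^{2p}$ satisfies $\sum_K C_{v,K}^2 \lesssim C_v^2$, in analogy with the derivation of (\ref{semi_norm_p}). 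Then I would apply \cite[Lemma~C.3]{MelenkMC10} to the pulled-back function $\widehat{v}_A$, which yields the geometric decay factor $(h/(h+\sigma))^p + (kh/(\sigma p))^p$ in the $L^2(\widehat{K})$ and $H^1(\widehat{K})$ error of $\tilde{\pi}$. Standard change of variable and a Bramble--Hilbert type scaling give the corresponding $L^2(K)$ and $H^1(K)$ estimates, and summing over $K$ and using the analogue of (\ref{one_element_grow2}) produces the desired global bound.

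For the $H^{1/2}(\partial\Omega)$ component I would use a multiplicative trace inequality on each boundary element,
\[
\|w\|_{H^{1/2}(\partial K \cap \partial\Omega)} \leq C\,\bigl(\|w\|_{L^2(K)}^{1/2}\|w\|_{H^1(K)}^{1/2} + \|w\|_{H^1(K)}\bigr),
\]
applied to $w = \tilde\Pi_W v_A - v_A$. The $L^2(K)$ and $H^1(K)$ bounds just derived then give the boundary estimate after summation. The $k^{1/2}$ factor in front is absorbed by the extra half-order of regularity gained through the geometric mean $\|w\|_{L^2(K)}^{1/2}\|w\|_{H^1(K)}^{1/2}$, which naturally produces the correct mixed combination of the two decay factors $(h/(h+\sigma))^p$ and $(kh/(\sigma p))^p$.

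The bound for $v_{H^2}$ via $\Pi_W$ is conceptually simpler since no geometric-in-$p$ factor is required. I would invoke \cite[Theorem~B.4]{MelenkMC10} on the reference element to get
\[
\|\pi \widehat{v} - \widehat{v}\|_{L^2(\widehat{K})} + p^{-1}\|\pi \widehat{v} - \widehat{v}\|_{H^1(\widehat{K})} \leq C p^{-2}\|\widehat{v}\|_{H^2(\widehat{K})},
\]
transform back to $K$ picking up the correct powers of $h$ (yielding the $(kh/p)$ and $(kh/p)^2$ combination after summing over elements and dividing by $k$), and close the boundary piece by the same multiplicative trace argument. The main obstacle is keeping track of the explicit constants in $k$, $h$, and $p$ when passing between $\widehat{K}$ and $K$ for the analytic function, and especially verifying that the growth estimate on $\widehat{v}_A$ has the precise constants required by \cite[Lemma~C.3]{MelenkMC10}; this is the same bookkeeping carried out in the proof of Lemma~\ref{lemma_proj_div_domain} above, so the argument will follow the same template.
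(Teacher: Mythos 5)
Your proposal is correct and follows essentially the same route as the paper: the authors' proof consists precisely of invoking the element-by-element procedure of \cite[Theorem~5.5]{MelenkMC10} (pull back to $\widehat{K}$, apply \cite[Theorem~B.4]{MelenkMC10} to $v_{H^2}$ and \cite[Lemma~C.3]{MelenkMC10} to the pulled-back analytic function with the derivative-growth bookkeeping you describe, then scale and sum), which is exactly what you spell out. The only cosmetic difference is the boundary term, which the paper handles with the single global inequality $k^{1/2}\Vert v\Vert_{L^{2}(\partial\Omega)}\leq C(k\Vert v\Vert_{L^{2}(\Omega)}+\Vert v\Vert_{H^{1}(\Omega)})$ applied to the interior error bounds rather than your element-wise multiplicative trace inequality; the global form is preferable here since $H^{1/2}(\partial\Omega)$ norms do not sum cleanly over boundary faces, but the underlying interpolation idea is the same.
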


\begin{proof}
The proof is a simple consequence of the procedure in \cite[Theorem~$5.5$]{MelenkMC10}. 
We have used the fact that for any $k\geq 1$, 
\begin{align*}
k^{1/2} \Vert v\Vert_{L^{2}(\partial\Omega)} \leq C\left( k \Vert v\Vert_{L^{2}(\Omega)} + 
\Vert v\Vert_{H^{1}(\Omega)} \right)\quad \forall v\in H^{1}(\Omega).
\end{align*}
\end{proof}

\section{Duality argument}

We recall (\ref{high_der}) that 
$\vert \nabla^{n} u (x)\vert^{2} = \Sigma_{\alpha\in \mathbb{N}_{0}^{d}:\vert \alpha\vert = n} \dfrac{n!}{\alpha !}
\vert D^{\alpha} u (x)\vert^{2}$. 

\begin{lemma}
\label{lemma_test_function}
We assume that the Assumptions (A1, A2) hold. 
Then, for any $(\boldsymbol{\varphi}, w)\in \boldsymbol{V}\times W$, there is 
$(\boldsymbol{\psi}, v)\in \boldsymbol{V}\times W$ such that $\Vert w\Vert_{L^{2}(\Omega)}^{2}
= b((\boldsymbol{\varphi}, w), (\boldsymbol{\psi}, v))$. 

$(\boldsymbol{\psi}, v)$ can be written as 
$(\boldsymbol{\psi}, v) = (\boldsymbol{\psi}_{A}, v_{A}) + (\boldsymbol{\psi}_{H^{2}}, v_{H^{2}})$. 
Here, both $\boldsymbol{\psi}_{A}$ and $v_{A}$ are analytic functions in $\Omega$, 
$\boldsymbol{\psi}_{H^{2}}\in H^{1}(\Omega;\mathbb{R}^{d})$, and $v_{H^{2}}\in H^{2}(\Omega)$. 
There are constants $C,\gamma>0$ independent of $k\geq k_{0}$ such that 
\begin{subequations}
\label{test_func_ineqs}
\begin{align}
\label{test_func_ineq1}
& k\Vert \boldsymbol{\psi}_{A}\Vert_{L^{2}(\Omega)} + \Vert \boldsymbol{\psi}_{A}\Vert_{H^{1}(\Omega)} 
\leq C k \Vert w\Vert_{L^{2}(\Omega)},\\
\label{test_func_ineq2}
& k\Vert v_{A}\Vert_{L^{2}(\Omega)} + \Vert v_{A}\Vert_{H^{1}(\Omega)}
\leq C k \Vert w\Vert_{L^{2}(\Omega)},\\
\label{test_func_ineq3}
& \Vert \nabla^{p+2} \boldsymbol{\psi}_{A}\Vert_{L^{2}(\Omega)} + \Vert \nabla^{p+2} v_{A}\Vert_{L^{2}(\Omega)}
\leq C\gamma^{p}\max (p, k)^{p+2}\Vert w\Vert_{L^{2}(\Omega)},\\
\label{test_func_ineq4}
& k^{2}\Vert \boldsymbol{\psi}_{H^{2}}\Vert_{L^{2}(\Omega)} + k \Vert \boldsymbol{\psi}_{H^{2}}\Vert_{H^{1}(\Omega)} 
\leq C \Vert w\Vert_{L^{2}(\Omega)},\\
\label{test_func_ineq5}
&  \Vert \nabla\cdot \boldsymbol{\psi}_{H^{2}}\Vert_{H^{1}(\Omega)} \leq C \Vert w\Vert_{L^{2}(\Omega)},\\
\label{test_func_ineq6}
& k^{2}\Vert v_{H^{2}} \Vert_{L^{2}(\Omega)} + k\Vert v_{H^{2}}\Vert_{H^{1}(\Omega)} +\Vert v_{H^{2}}\Vert_{H^{2}(\Omega)} 
\leq C \Vert w\Vert_{L^{2}(\Omega)}.
\end{align}
\end{subequations}
\end{lemma}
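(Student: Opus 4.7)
The plan is to realize $(\boldsymbol{\psi},v)$ as the Riesz representer of the antilinear functional $F(\boldsymbol{\varphi},w'):=(w',w)_{\Omega}$ with respect to the inner product $b$, then extract the decomposition by reducing the strong form of the dual problem to two scalar Helmholtz problems and applying Theorem~\ref{thm_divergence_regularity} and Lemma~\ref{lemma_analytic}. Theorem~\ref{thm_stability} together with Remark~\ref{remark_stability} shows that $b$ is coercive on $\boldsymbol{V}\times W$ in a norm that dominates $\|w'\|_{L^{2}(\Omega)}$ in the second slot. Since $|F(\boldsymbol{\varphi},w')|\le\|w\|_{L^{2}(\Omega)}\|w'\|_{L^{2}(\Omega)}$, Riesz representation yields a unique $(\boldsymbol{\psi},v)\in\boldsymbol{V}\times W$ with $b((\boldsymbol{\varphi},w'),(\boldsymbol{\psi},v))=F(\boldsymbol{\varphi},w')$ for every test pair; taking $w'=w$ recovers the stated identity.

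Integrating by parts in the defining equation (testing with smooth compactly supported $\boldsymbol{\varphi}$ and $w'$, then with arbitrary normal traces on $\partial\Omega$) converts it into the strong system
\begin{align*}
k^{2}\boldsymbol{\psi}-2\im k\nabla v-\nabla(\nabla\cdot\boldsymbol{\psi}) &= 0 \text{ in }\Omega,\\
-\Delta v+k^{2}v-2\im k\nabla\cdot\boldsymbol{\psi} &= w \text{ in }\Omega,
\end{align*}
together with two boundary identities. Setting $\Phi:=v-(\im/k)\nabla\cdot\boldsymbol{\psi}$, elementary manipulation (using the divergence of the first volume equation together with the second, and the normal trace of the first on $\partial\Omega$) yields the explicit relations $\nabla\cdot\boldsymbol{\psi}=\im k(\Phi-v)$, $\boldsymbol{\psi}=(\im/k)\nabla(v+\Phi)$, and the sign-flipped Helmholtz problem $-\Delta\Phi-k^{2}\Phi=-w$ in $\Omega$, $\partial_{\normal}\Phi+\im k\Phi=0$ on $\partial\Omega$, whose analysis is covered by the $+\im k$ variant noted after Assumption (A2). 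Writing $V:=v+\Phi$, one checks that $V$ satisfies the standard Helmholtz problem $-\Delta V-k^{2}V=-2k^{2}\Phi$, $\partial_{\normal}V-\im kV=-k(1+\im)\Phi$, and $\boldsymbol{\psi}=(\im/k)\nabla V$.

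Decompose $\Phi=\Phi_{A}+\Phi_{H^{2}}$ via Theorem~\ref{thm_divergence_regularity}. A direct application of Theorem~\ref{thm_divergence_regularity} to $V$ would lose a factor of $k$ since the $L^{2}$ norm of the datum $-2k^{2}\Phi$ is only of order $k\|w\|_{L^{2}}$; instead, split the data of $V$ along the decomposition of $\Phi$ and define $V^{(1)}:=S_{k}(-2k^{2}\Phi_{H^{2}},-k(1+\im)\Phi_{H^{2}})$ and $V^{(2)}:=S_{k}(-2k^{2}\Phi_{A},-k(1+\im)\Phi_{A})$. The bounds $k^{2}\|\Phi_{H^{2}}\|_{L^{2}}+k\|\Phi_{H^{2}}\|_{H^{1}}\le C\|w\|_{L^{2}}$ control the data of $V^{(1)}$ by $C\|w\|_{L^{2}}$, so Theorem~\ref{thm_divergence_regularity} gives $V^{(1)}=V^{(1)}_{A}+V^{(1)}_{H^{2}}$ with the full set of bounds (\ref{decomp_ineq1})--(\ref{decomp_ineq4}). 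Since $\Phi_{A}$ is analytic with the derivative growth of (\ref{decomp_ineq2}), Lemma~\ref{lemma_analytic} shows that $V^{(2)}$ is itself analytic with the required derivative bounds. Set $V_{A}:=V^{(1)}_{A}+V^{(2)}$, $V_{H^{2}}:=V^{(1)}_{H^{2}}$, $\boldsymbol{\psi}_{A}:=(\im/k)\nabla V_{A}$, $\boldsymbol{\psi}_{H^{2}}:=(\im/k)\nabla V_{H^{2}}$, $v_{A}:=V_{A}-\Phi_{A}$, $v_{H^{2}}:=V_{H^{2}}-\Phi_{H^{2}}$; then (\ref{test_func_ineq1})--(\ref{test_func_ineq4}) and (\ref{test_func_ineq6}) follow by routine collection of the bounds on each piece.

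The main obstacle is (\ref{test_func_ineq5}), and this is where the sharpened bound (\ref{decomp_ineq3}) plays a decisive role. From the Helmholtz equation for $V^{(1)}$ one has
\[
\Delta V^{(1)}_{H^{2}}+k^{2}V^{(1)}_{H^{2}}=2k^{2}\Phi_{H^{2}}-\bigl(\Delta V^{(1)}_{A}+k^{2}V^{(1)}_{A}\bigr).
\]
Applying (\ref{decomp_ineq3}) to $V^{(1)}$ controls the last parenthesis in $H^{1}(\Omega)$ by $Ck\|w\|_{L^{2}}$, and (\ref{decomp_ineq4}) applied to $\Phi$ gives $\|2k^{2}\Phi_{H^{2}}\|_{H^{1}(\Omega)}\le Ck\|w\|_{L^{2}}$; subtracting the term $k^{2}V^{(1)}_{H^{2}}$ (whose $H^{1}$ norm is bounded by $Ck\|w\|_{L^{2}}$ via (\ref{decomp_ineq4})) produces $\|\Delta V^{(1)}_{H^{2}}\|_{H^{1}(\Omega)}\le Ck\|w\|_{L^{2}}$. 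Since $\nabla\cdot\boldsymbol{\psi}_{H^{2}}=(\im/k)\Delta V^{(1)}_{H^{2}}$, division by $k$ yields (\ref{test_func_ineq5}) and completes the proof.
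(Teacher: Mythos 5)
Your proposal is correct and is essentially the paper's own argument in different notation: your $\Phi$ is the negative of the paper's adjoint solution $z$, your $V=v+\Phi$ is the paper's $v-z$ with $\boldsymbol{\psi}=(\im/k)\nabla V$, and the two-stage decomposition (Theorem~\ref{thm_divergence_regularity} applied to $\Phi$ and then to the $H^{2}$-driven part of $V$, Lemma~\ref{lemma_analytic} for the analytically driven part) together with the use of (\ref{decomp_ineq3}) to control $\Vert \nabla\cdot\boldsymbol{\psi}_{H^{2}}\Vert_{H^{1}(\Omega)}$ is exactly the paper's route to (\ref{test_func_ineq5}). The only point to tighten is the passage from the Riesz representer's weak equation to the strong system and its natural boundary conditions, which presupposes regularity of $(\boldsymbol{\psi},v)$ not known a priori; the paper sidesteps this by running your reduction backwards, constructing $(\boldsymbol{\psi},v)$ explicitly from $z=-\Phi$ via $\im k\boldsymbol{\psi}+\nabla v=\nabla z$, $\im k v+\nabla\cdot\boldsymbol{\psi}=-\im k z$, $\boldsymbol{\psi}\cdot\normal+v=\im z$, and verifying the identity $\Vert w\Vert_{L^{2}(\Omega)}^{2}=b((\boldsymbol{\varphi},w),(\boldsymbol{\psi},v))$ directly by integration by parts.
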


\begin{remark}
\label{remark_rt}
Since we can only have $\Vert \nabla\cdot \boldsymbol{\psi}_{H^{2}}\Vert_{H^{1}(\Omega)} \leq C \Vert w\Vert_{L^{2}(\Omega)}$ 
instead of $\Vert \boldsymbol{\psi}_{H^{2}}\Vert_{H^{2}(\Omega)} \leq C \Vert w\Vert_{L^{2}(\Omega)}$, it is necessary to use 
Raviart-Thomas space to approximate $\boldsymbol{\phi}$ and $\boldsymbol{\psi}$ instead of vector valued continuous 
piece-wise polynomial space, in order to show quasi optimal convergence.
\end{remark}

\begin{remark}
\label{remark_weight}
For any $(\boldsymbol{\phi}, u), (\boldsymbol{\psi}, v)\in \boldsymbol{V}\times W$, we define 
\begin{align*}
& b_{\tau}((\boldsymbol{\phi}, u), (\boldsymbol{\psi}, v)) \\
\nonumber
= & (\im k \boldsymbol{\phi}+\nabla u, \im k \boldsymbol{\psi} + \nabla v)_{\Omega} 
+ (\im k u + \nabla\cdot \boldsymbol{\phi}, \im k v + \nabla\cdot \boldsymbol{\psi})_{\Omega}
 + \tau \langle \boldsymbol{\phi}\cdot\normal + u, \boldsymbol{\psi}\cdot\normal + v \rangle_{\partial\Omega},
\end{align*}
where $\tau$ is a positive constant. It is easy to see that the exact solution 
$(\boldsymbol{\phi}= \im k^{-1}\nabla u , u)$ satisfies
\begin{align*}
b_{\tau}((\boldsymbol{\phi}, u), (\boldsymbol{\psi}, v)) = 
 (-\im fk^{-1}, \im k v+ \nabla\cdot \boldsymbol{\psi})_{\Omega} 
+ \tau k^{-1}\langle \im g,  \boldsymbol{\psi}\cdot\normal + v\rangle_{\partial \Omega}
\quad\forall (\boldsymbol{\psi},v)\in \boldsymbol{V}\times W.
\end{align*} 
As we mentioned in Remark~\ref{remark_stability}, the variational form $b_{\tau}$ is uniformly coercive with respect to 
the wave number $k$ if $\tau\geq 1$. However, if we choose $\tau$ to be $1$, then the boundary condition 
(\ref{duality_two_eq3}) in the proof of Lemma~\ref{lemma_test_function} will be 
\begin{align*}
\boldsymbol{\phi}\cdot\normal + v = \im k z\quad \text{ on }\partial\Omega.
\end{align*}
The consequence is that all right hand sides of regularity estimates (\ref{test_func_ineqs}) have to be multiplied 
by an extra factor $k^{1/2}$, such that the quasi optimal convergent result in Theorem~\ref{main_res} 
can {\em not } be obtained.
\end{remark}

\begin{proof}
Let $z$ be the solution of the Helmholtz equation
\begin{align*}
-\Delta z - k^{2}z = w\text{ in }\Omega,\qquad
\dfrac{\partial z}{\partial\normal} + \im k z = 0\text{ on }\partial\Omega.
\end{align*}
It is easy to check that 
\begin{align*}
& \Vert w\Vert_{L^{2}(\Omega)}^{2} \\
= & (\nabla w, \nabla z)_{\Omega} 
-k^{2}(w, z)_{\Omega} -\im k \langle w, z\rangle_{\partial\Omega}\\
= & (\im k \boldsymbol{\varphi}+\nabla w, \nabla z)_{\Omega}
-(\im k \boldsymbol{\varphi}, \nabla z)_{\Omega} - k^{2}(w, z)_{\Omega} 
-\im k \langle w, z \rangle_{\partial \Omega}\\
= & (\im k \boldsymbol{\varphi}+\nabla w, \nabla z)_{\Omega} 
+ (\nabla\cdot \boldsymbol{\varphi}+\im k w, -\im kz)_{\Omega}
+\langle \boldsymbol{\varphi}\cdot\normal + w, \im k z \rangle_{\partial\Omega}.
\end{align*}

According to Theorem~\ref{thm_divergence_regularity} and Assumption (A2),  
$z$ can be written as $z = z_{A} + z_{H^{2}}$. 
$z_{A}$ is an analytic function and $z_{H^{2}}\in H^{2}(\Omega)$. In addition, 
\begin{subequations}
\label{duality_one_ineqs}
\begin{align}
\label{duality_one_ineq1}
& k\Vert z_{A}\Vert_{L^{2}(\Omega)} + \Vert z_{A}\Vert_{H^{1}(\Omega)}
\leq C\Vert w\Vert_{L^{2}(\Omega)},\\
\label{duality_one_ineq2}
& \Vert \nabla^{p+2} z_{A}\Vert_{L^{2}(\Omega)}
\leq C\gamma^{p}k^{-1}\max (p, k)^{p+2}\Vert w\Vert_{L^{2}(\Omega)},\\
\label{duality_one_ineq3}
&\Vert z_{H^{2}}\Vert_{H^{2}(\Omega)} + k\Vert z_{H^{2}}\Vert_{H^{1}(\Omega)} 
+ k^{2}\Vert z_{H^{2}}\Vert_{L^{2}(\Omega)}\leq C \Vert w\Vert_{L^{2}(\Omega)}.
\end{align}
\end{subequations}

According to Theorem~\ref{thm_stability}, we can define $(\boldsymbol{\psi}, v)\in\boldsymbol{V}\times W$ by
\begin{subequations}
\label{duality_two_eqs}
\begin{align}
\label{duality_two_eq1}
& \im k \boldsymbol{\psi} + \nabla v = \nabla z \quad \text{ in }\Omega,\\
\label{duality_two_eq2}
& \im k v + \nabla\cdot \boldsymbol{\psi} = -\im k z \quad \text{ in }\Omega,\\
\label{duality_two_eq3}
& k^{1/2}\left( \boldsymbol{\psi}\cdot\normal + v \right) = \im k^{1/2} z \quad \text{ on }\partial\Omega.
\end{align}
\end{subequations}

Obviously, we can write $(\boldsymbol{\psi}, v)$ as $(\tilde{\boldsymbol{\psi}}_{A}, \tilde{v}_{A}) 
+ (\tilde{\boldsymbol{\psi}}_{H^{2}}, \tilde{v}_{H^{2}})$ where 
\begin{align*}
& \im k \tilde{\boldsymbol{\psi}}_{A} + \nabla \tilde{v}_{A} = \nabla z_{A} 
 & \im k \tilde{\boldsymbol{\psi}}_{H^{2}} + \nabla \tilde{v}_{H^{2}} = \nabla z_{H^{2}}
\quad \text{ in }\Omega,\\
& \im k \tilde{v}_{A} + \nabla\cdot \tilde{\boldsymbol{\psi}}_{A} = -\im k z_{A} 
 & \im k \tilde{v}_{H^{2}} + \nabla\cdot \tilde{\boldsymbol{\psi}}_{H^{2}} = -\im k z_{H^{2}}
\quad \text{ in }\Omega,\\
& k^{1/2}\left( \tilde{\boldsymbol{\psi}}_{A}\cdot\normal + \tilde{v}_{A} \right) = \im k^{1/2} z_{A} 
 & k^{1/2}\left( \tilde{\boldsymbol{\psi}}_{H^{2}}\cdot\normal + \tilde{v}_{H^{2}} \right) = \im k^{1/2} z_{H^{2}}
\quad \text{ on }\partial\Omega.
\end{align*}
It is easy to see that 
\begin{align*}
& \Delta \tilde{v}_{A} + k^{2} \tilde{v}_{A} = \Delta z_{A} - k^{2}z_{A}
& \Delta \tilde{v}_{H^{2}} + k^{2} \tilde{v}_{H^{2}} = \Delta z_{H^{2}} - k^{2}z_{H^{2}}\quad \text{ in }\Omega,\\
& \dfrac{\partial \tilde{v}_{A}}{\partial\normal} - \im k \tilde{v}_{A} = k z_{A} + \dfrac{\partial z_{A}}{\partial\normal}
& \dfrac{\partial \tilde{v}_{H^{2}}}{\partial\normal} - \im k \tilde{v}_{H^{2}} = k z_{H^{2}} 
+\dfrac{\partial z_{H^{2}}}{\partial\normal} \quad \text{ on }\partial\Omega.
\end{align*}

It is easy to see that $\tilde{v}_{A} - z_{A} = S_{k} (-2k^{2}z_{A}, (1+\im)k z_{A})$. By Lemma~\ref{lemma_analytic}, we have 
\begin{align*}
& \Vert \nabla^{p+2} \tilde{v}_{A}\Vert_{L^{2}(\Omega)}
\leq \Vert \nabla^{p+2}(\tilde{v}_{A} - z_{A})\Vert_{L^{2}(\Omega)}
+ \Vert \nabla^{p+2}z_{A}\Vert_{L^{2}(\Omega)} \\
\leq & C \gamma^{p+2}  \max (p+2, k)^{p+2}
\left( k\Vert z_{A} \Vert_{L^{2}(\Omega)} + \Vert \nabla z_{A} \Vert_{L^{2}(\Omega)}\right)
+ \Vert \nabla^{p+2}z_{A}\Vert_{L^{2}(\Omega)} \\
\leq & C \gamma^{p}  \max (p, k)^{p+2} \Vert w\Vert_{L^{2}(\Omega)}, \\
& k\Vert \tilde{v}_{A}\Vert_{L^{2}(\Omega)} + \Vert \tilde{v}_{A}\Vert_{H^{1}(\Omega)} 
\leq C k\Vert w\Vert_{L^{2}(\Omega)}.
\end{align*}
Since $\im k \tilde{\boldsymbol{\psi}}_{A} + \nabla \tilde{v}_{A} = \nabla z_{A}$, we have 
\begin{align*}
& \Vert \nabla^{p+2} \tilde{\boldsymbol{\psi}}_{A}\Vert_{L^{2}(\Omega)}
\leq C \gamma^{p}  \max (p, k)^{p+2} \Vert w\Vert_{L^{2}(\Omega)}, \\
& k\Vert \tilde{\boldsymbol{\psi}}_{A}\Vert_{L^{2}(\Omega)} 
+ \Vert \tilde{\boldsymbol{\psi}}_{A}\Vert_{H^{1}(\Omega)} 
\leq C k\Vert w\Vert_{L^{2}(\Omega)}.
\end{align*}

By (\ref{duality_one_ineq3}), we have 
\begin{align*}
\Vert \Delta z_{H^{2}} - k^{2}z_{H^{2}}\Vert_{L^{2}(\Omega)} 
+ \Vert k z_{H^{2}} + \dfrac{\partial z_{H^{2}}}{\partial\normal}\Vert_{H^{1/2}(\partial\Omega)} 
\leq C \Vert w\Vert_{L^{2}(\Omega)}.
\end{align*}
By Theorem~\ref{thm_divergence_regularity} and the Assumption (A2) again, 
$\tilde{v}_{H^{2}}$ can be written as 
$\tilde{v}_{H^{2}}= \tilde{v}_{A,H^{2}}+\tilde{v}_{H^{2},H^{2}}$ where $\tilde{v}_{A,H^{2}}$ 
and $\tilde{v}_{H^{2},H^{2}}$ are analytic and in $H^{2}(\Omega)$, respectively. In addition, we have 
\begin{align*}
k^{2}\Vert \tilde{v}_{H^{2},H^{2}}\Vert_{L^{2}(\Omega)}+k\Vert \tilde{v}_{H^{2},H^{2}}\Vert_{H^{1}(\Omega)} 
+\Vert \tilde{v}_{H^{2},H^{2}}\Vert_{H^{2}(\Omega)}\leq C \Vert w\Vert_{L^{2}(\Omega)}.
\end{align*}
We define $v_{A} = \tilde{v}_{A} + \tilde{v}_{A,H^{2}}$ and $v_{H^{2}} = \tilde{v}_{H^{2},H^{2}}$. 
Then, (\ref{test_func_ineq2}, \ref{test_func_ineq6}) hold.
We write $\tilde{\boldsymbol{\psi}}_{H^{2}}$ as $\tilde{\boldsymbol{\psi}}_{H^{2}}
=\tilde{\boldsymbol{\psi}}_{A,H^{2}}+\tilde{\boldsymbol{\psi}}_{H^{2},H^{2}}$ such that 
$\im k \tilde{\boldsymbol{\psi}}_{A,H^{2}} + \nabla \tilde{v}_{A,H^{2}}=0$. 
We define $\boldsymbol{\psi}_{A} = \tilde{\boldsymbol{\psi}}_{A} + \tilde{\boldsymbol{\psi}}_{A,H^{2}}$. 
Then, (\ref{test_func_ineq1}, \ref{test_func_ineq3}, \ref{test_func_ineq4}) hold. 

Now, we only need to prove (\ref{test_func_ineq5}). 
Since $\im k\tilde{\boldsymbol{\psi}}_{A,H^{2}}+\nabla\tilde{v}_{A,H^{2}}=0$ and (\ref{decomp_ineq3}), we have 
\begin{align*}
& \Vert \im k \tilde{v}_{A,H^{2}} + \nabla\cdot \tilde{\boldsymbol{\psi}}_{A,H^{2}}\Vert_{H^{1}(\Omega)} 
= k^{-1}\Vert k^{2} \tilde{v}_{A,H^{2}} + \Delta \tilde{v}_{A,H^{2}}\Vert_{H^{1}(\Omega)} \\
\leq & C \left(\Vert \Delta z_{H^{2}} - k^{2}z_{H^{2}}\Vert_{L^{2}(\Omega)} 
+ \Vert k z_{H^{2}} + \dfrac{\partial z_{H^{2}}}{\partial\normal}\Vert_{H^{1/2}(\partial\Omega)} \right)
\leq C \Vert w\Vert_{L^{2}(\Omega)}.
\end{align*}
Notice that 
\begin{align*}
& (\im k \tilde{v}_{H^{2},H^{2}} + \nabla\cdot \tilde{\boldsymbol{\psi}}_{H^{2},H^{2}}) 
+ (\im k \tilde{v}_{A,H^{2}} + \nabla\cdot \tilde{\boldsymbol{\psi}}_{A,H^{2}})\\ 
= & \im k \tilde{v}_{H^{2}} + \nabla\cdot \tilde{\boldsymbol{\psi}}_{H^{2}} =-\im k z_{H^{2}}.
\end{align*} 
We define $\boldsymbol{\psi}_{H^{2}} = \tilde{\boldsymbol{\psi}}_{H^{2}, H^{2}}$.
Then, we can conclude that (\ref{test_func_ineq5}) is true. 
\end{proof}

Now we can provide the proof for the quasi optimal convergent result in Theorem~\ref{main_res}.
\begin{proof}(Theorem \ref{main_res})
We denote by $e^{\boldsymbol{\phi}} = \boldsymbol{\phi} - \boldsymbol{\phi}_{h}$ and $e^{u} = u - u_{h}$.
Applying Lemma~\ref{lemma_test_function} with $w = e^{u}$, we have 
\begin{align*}
& \Vert e^{u}\Vert_{L^{2}(\Omega)}^{2} = b(( e^{\boldsymbol{\phi}}, e^{u}), (\boldsymbol{\psi}, v)) \\
= & b((e^{\boldsymbol{\phi}}, e^{u}), (\boldsymbol{\psi}_{A}, v_{A})) 
+ b((e^{\boldsymbol{\phi}}, e^{u}), (\boldsymbol{\psi}_{H^{2}}, v_{H^{2}})),
\end{align*}
such that $\boldsymbol{\psi}_{A}, v_{A}, \boldsymbol{\psi}_{H^{2}}, v_{H^{2}}$ satisfy (\ref{test_func_ineqs}). Then, 
by the standard Galerkin orthogonality of the first order system least squares method (\ref{fosls_hwn}), we have 
that for any $\widetilde{\boldsymbol{\psi}}_{A},\widetilde{\boldsymbol{\psi}}_{H^{2}} \in \boldsymbol{V}_{h}$ and 
any $\widetilde{v}_{A}, \widetilde{v}_{H^{2}} \in W_{h}$
\begin{align*}
\Vert e^{u}\Vert_{L^{2}(\Omega)}^{2} =b((e^{\boldsymbol{\phi}}, e^{u}), 
(\boldsymbol{\psi}_{A}-\widetilde{\boldsymbol{\psi}}_{A}, v_{A}-\widetilde{v}_{A})) 
+ b((e^{\boldsymbol{\phi}}, e^{u}), (\boldsymbol{\psi}_{H^{2}}-\widetilde{\boldsymbol{\psi}}_{H^{2}}, 
v_{H^{2}}-\widetilde{v}_{H^{2}})).
\end{align*}

We choose $\widetilde{\boldsymbol{\psi}}_{A}=\Pi_{\boldsymbol{V}}\boldsymbol{\psi}_{A}$ 
($\Pi_{\boldsymbol{V}}$ defined in (\ref{proj_div_domain})),  
$\widetilde{\boldsymbol{\psi}}_{H^{2}}$ to be $\boldsymbol{\psi}_{h}$ in Lemma~\ref{lemma_div_h2}, 
$\widetilde{v}_{A} = \tilde{\Pi}_{W} v_{A}$ and $\widetilde{v}_{H^{2}}= \Pi_{W}v_{H^{2}}$ 
($\tilde{\Pi}_{W}$ and $\Pi_{W}$ defined in (\ref{h1_projs})).
Then by Lemma~\ref{lemma_proj_div_domain}, Lemma~\ref{lemma_div_h2} and Lemma~\ref{lemma_h1_app}, 
it is straightforward to see that if (\ref{p_condition}) holds,
\begin{align*}
& \Vert e^{u}\Vert_{L^{2}(\Omega)}\\
\leq & C h \left( \Vert \im k e^{\boldsymbol{\phi}} + \nabla e^{u} \Vert_{L^{2}(\Omega)}  
+ \Vert \im k e^{u} + \nabla\cdot e^{\boldsymbol{\phi}}\Vert_{L^{2}(\Omega)}\right)\\ 
&\quad + C h^{1/2} \Vert e^{\boldsymbol{\phi}}\cdot\normal + e^{u} \Vert_{L^{2}(\partial\Omega)}.
\end{align*}
Finally, by Theorem~\ref{thm_stability} (the stability estimate) and the standard Galerkin orthogonality argument 
for the first order system least squares method (\ref{fosls_hwn}) again, we can conclude that the proof is complete.
\end{proof}

\section{Numerical results}
In this section, we present some numerical
results of the FOSLS method for the following 2-d Helmholtz problem:
\begin{eqnarray*}
&-\Delta u - \kappa^2 u = f := \frac{\sin{\kappa r}}{r} \qquad &{\rm
in} \ \Omega, \\
&\frac{\partial u}{\partial n} + \textbf{i} \kappa u = g \qquad
&{\rm on} \ \partial \Omega.
\end{eqnarray*}
%The above problem can be viewed as a complex conjugate system of the Helmholtz problem (\ref{hwn_pde}). 
%All the theoretical results are also fit for the above problem. 
Here $\Omega$ is unit square $[-0.5,0.5]\times[-0.5,0.5]$, and $g$ is chosen such that the exact solution is given by
\begin{eqnarray*}
u = \frac{\cos{\kappa r}}{\kappa} - \frac{\cos{\kappa} +
\textbf{i}\sin{\kappa}}{\kappa ( J_0(\kappa) + \textbf{i}J_1(\kappa)
)} J_0(\kappa r)
\end{eqnarray*}
in polar coordinates, where $J_\nu (z)$ are Bessel functions of the first kind.

In the following experiments, the FOSLS method is implemented for the pair of finite element spaces $\boldsymbol{V}_{h}  \times W_{h} $ with $p+1=1,2,3,4$, which are denoted by $RT1P1, RT2P2, RT3P3$ and $RT4P4$. For the fixed wave number $k$, we first show the dependence of  the relative error $\|u-u_h\|_{L^2(\Omega)}/\|u\|_{L^2(\Omega)}$ and  $\|\boldsymbol{\phi}-\boldsymbol{\phi}_h\|_{L^2(\Omega)}/\|\boldsymbol{\phi}\|_{L^2(\Omega)}$ on polynomial degree $p$ and mesh size $h$. The Figure  \ref{fig-1}  displaying the relative error is plotted in log-log coordinates. The dotted lines in the two graphs are denoted for the convergence rate $O(k h^2/p^2)$ (left) and $O(k h/p)$ (right) respectively. Since the dotted lines in each graph of Figure \ref{fig-1} are parallel for different $p$, we only plot a single dotted line in each graph to reveal its dependence on $h$. The left graph of Figure \ref{fig-1} displays the relative error $\|u-u_h\|_{L^2(\Omega)}/\|u\|_{L^2(\Omega)}$ for the case $k = 200$ by the FOSLS method based on $RT1P1, RT2P2, RT3P3$ and $RT4P4$ approximations, while the right graph displays the corresponding relative error $\|\boldsymbol{\phi}-\boldsymbol{\phi}_h\|_{L^2(\Omega)}/\|\boldsymbol{\phi}\|_{L^2(\Omega)}$. We find that the relative error for $u$ converges slower than $O(k h^2/p^2)$ when $p=1$ or $2$ on the underlying meshes, however, it converges almost or faster than $O(k h^2/p^2)$ for higher polynomial degree $p=3$ or $4$. The similar phenomenon can also be observed from the right graph of Figure \ref{fig-1}  for the relative error for $\boldsymbol{\phi}$ compared with the convergence rate $O(k h/p)$.

\begin{figure}[htbp]
\centering
    \includegraphics[width=2.3in]{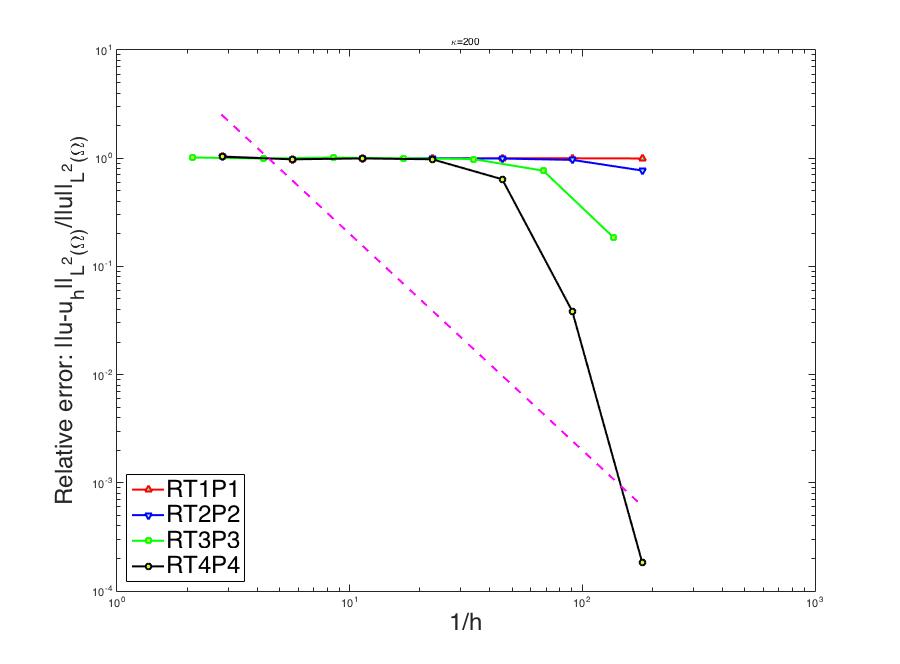}
    \includegraphics[width=2.3in]{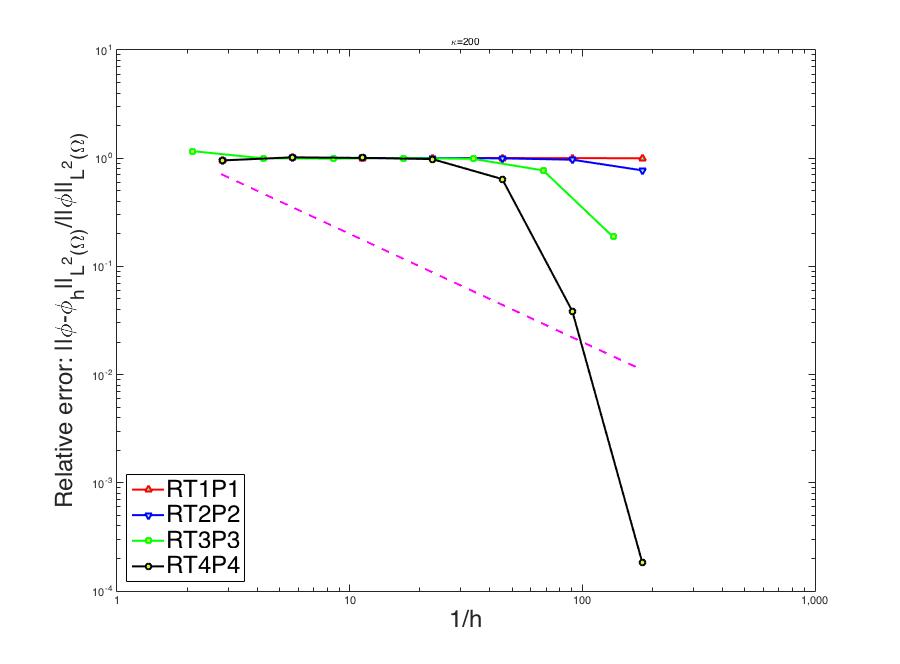}
    \caption{\footnotesize The relative errors $\|u-u_h\|_{L^2(\Omega)}/\|u\|_{L^2(\Omega)}$ (left) and  $\|\boldsymbol{\phi}-\boldsymbol{\phi}_h\|_{L^2(\Omega)}/\|\boldsymbol{\phi}\|_{L^2(\Omega)}$ (right) for the case $k=200$ based on the FOSLS method with different polynomial degree.}\label{fig-1}
\end{figure}

\begin{figure}[htbp]
\centering
    \includegraphics[width=2.3in]{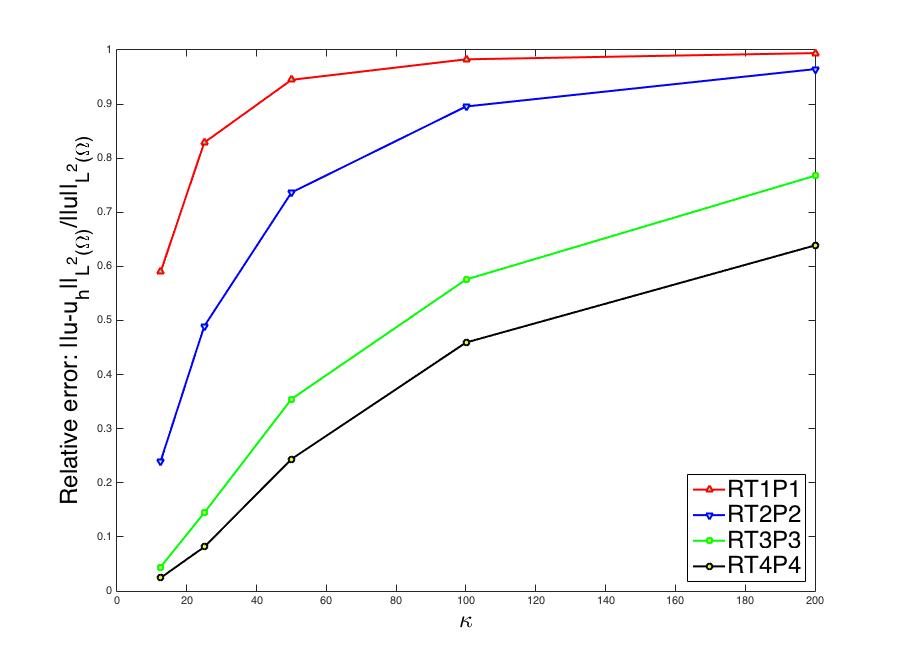}
    \includegraphics[width=2.3in]{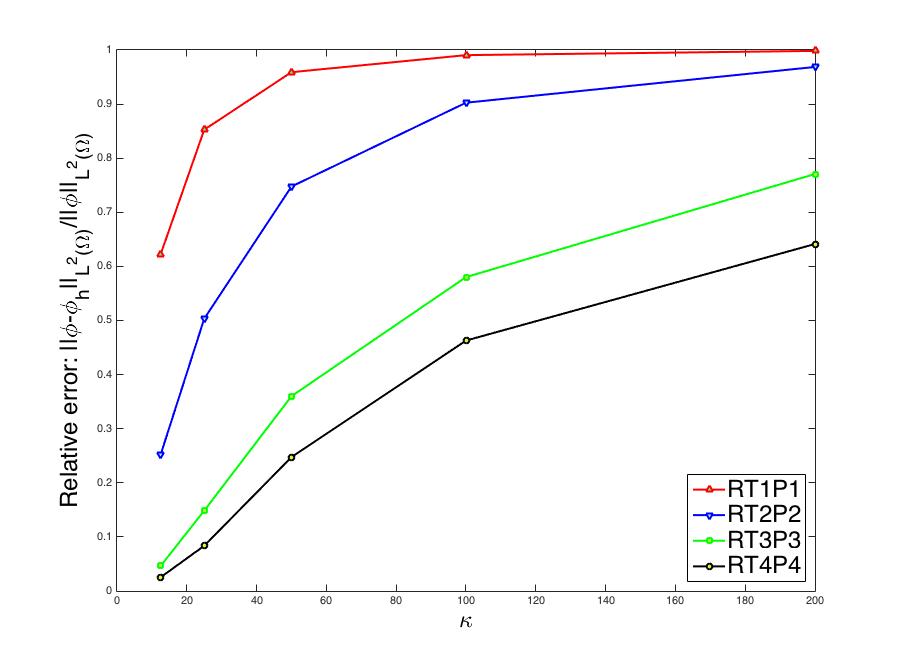}
    \caption{\footnotesize The relative errors $\|u-u_h\|_{L^2(\Omega)}/\|u\|_{L^2(\Omega)}$ (left) and  $\|\boldsymbol{\phi}-\boldsymbol{\phi}_h\|_{L^2(\Omega)}/\|\boldsymbol{\phi}\|_{L^2(\Omega)}$ (right) under the mesh condition $kh/p \approx  1$.  }\label{fig-2-1}
\end{figure}

\begin{figure}[htbp]
\centering
    \includegraphics[width=2.3in]{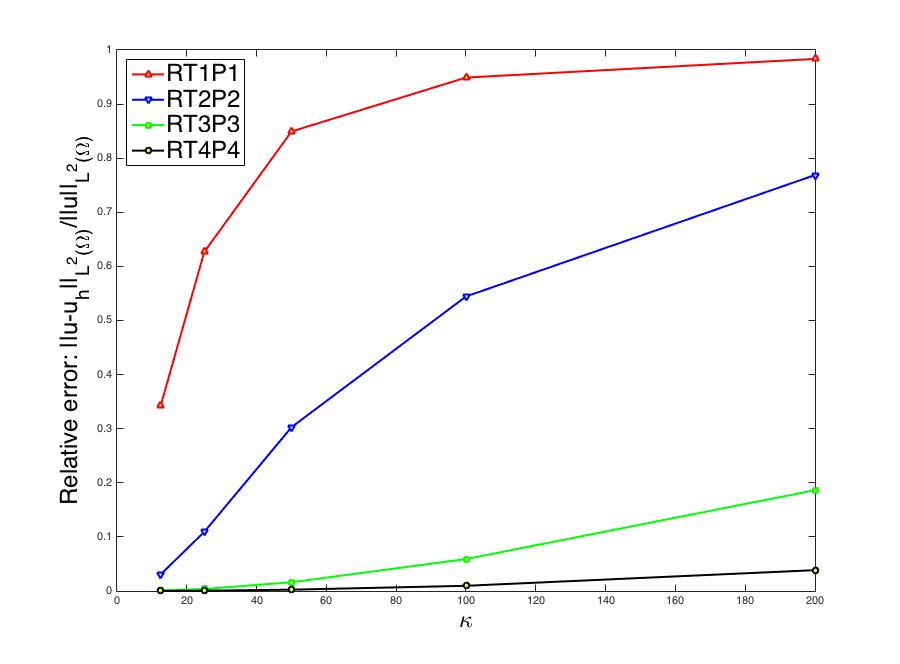}
    \includegraphics[width=2.3in]{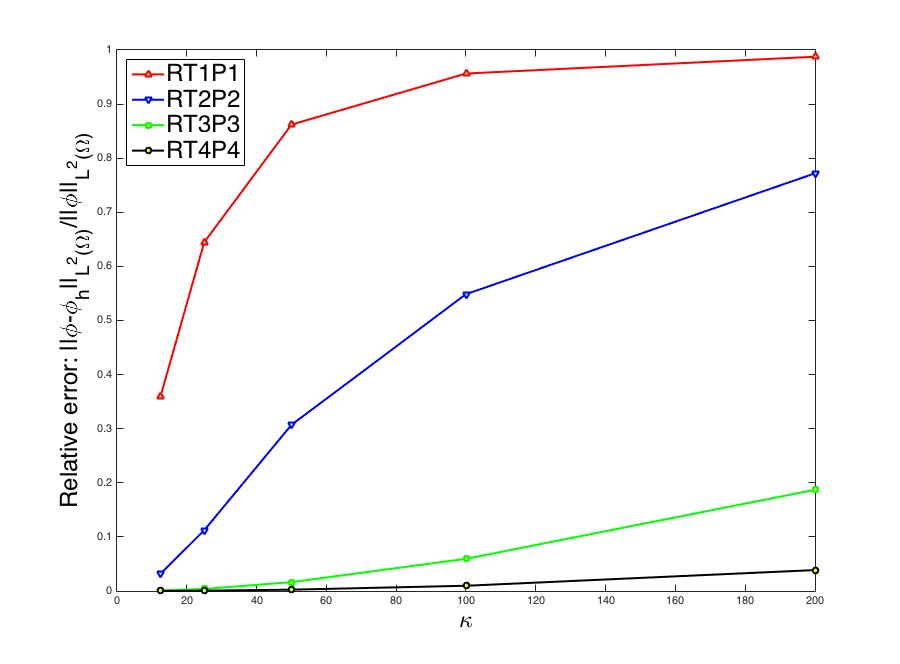}
    \caption{\footnotesize The relative errors $\|u-u_h\|_{L^2(\Omega)}/\|u\|_{L^2(\Omega)}$ (left) and  $\|\boldsymbol{\phi}-\boldsymbol{\phi}_h\|_{L^2(\Omega)}/\|\boldsymbol{\phi}\|_{L^2(\Omega)}$ (right) under the mesh condition  $kh/p \approx  0.5$. }\label{fig-2-2}
\end{figure}

Figure \ref{fig-2-1} displays the relative errors for $u$ and $\boldsymbol{\phi}$ under the mesh condition $kh/p \approx  1$ respectively. It shows that for the FOSLS method based on different polynomial degree approximations (p=1,2,3,4), both two types of relative errors cannot be controlled under the mesh condition $kh/p \approx  1$ and increase with the wave number $k$, which indicates the existence of the pollution error. Figure \ref{fig-2-2} displays the same relative errors under the mesh condition $kh/p \approx  0.5$. We observe that under this mesh condition, although the relative errors still  increase with the wave number $k$ for the FOSLS method based on lower order polynomial approximations, the relative errors are quite small for different wave number $k$ when the polynomial degree $p=4$. The results support the theoretical analysis.

\begin{figure}[htbp]
\centering
    \includegraphics[width=2.3in]{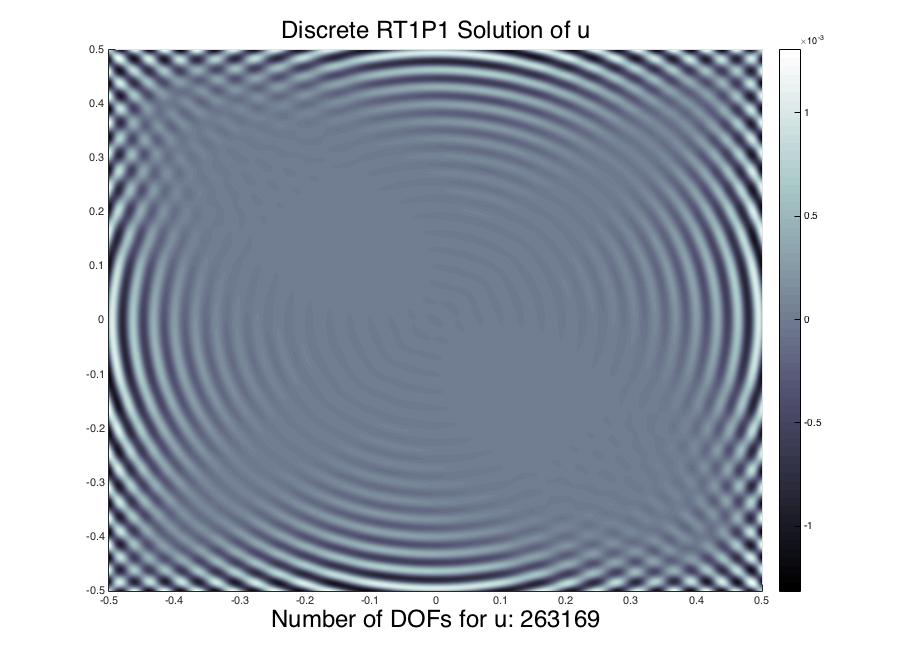}
    \includegraphics[width=2.3in]{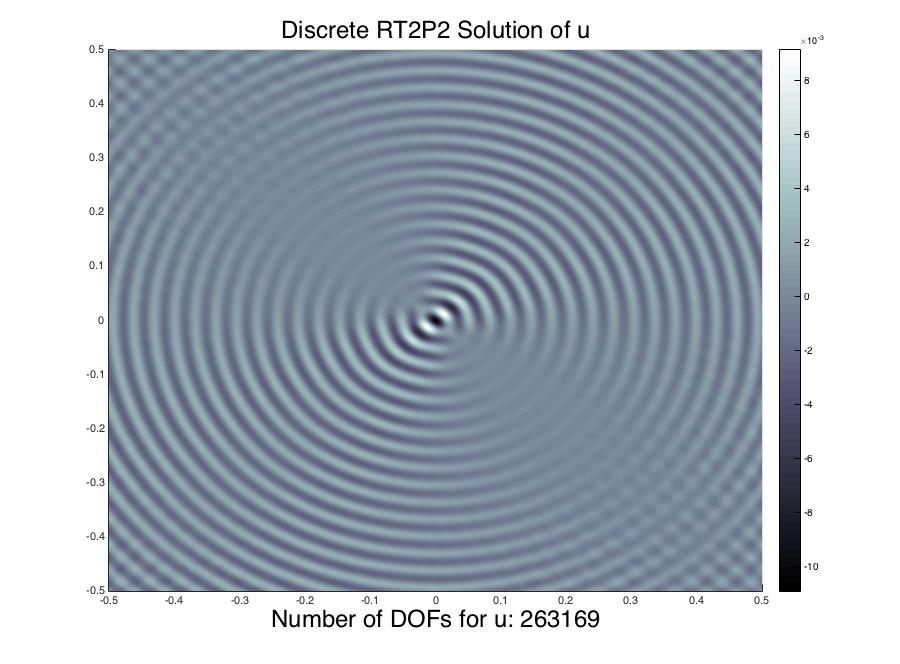}
    \includegraphics[width=2.3in]{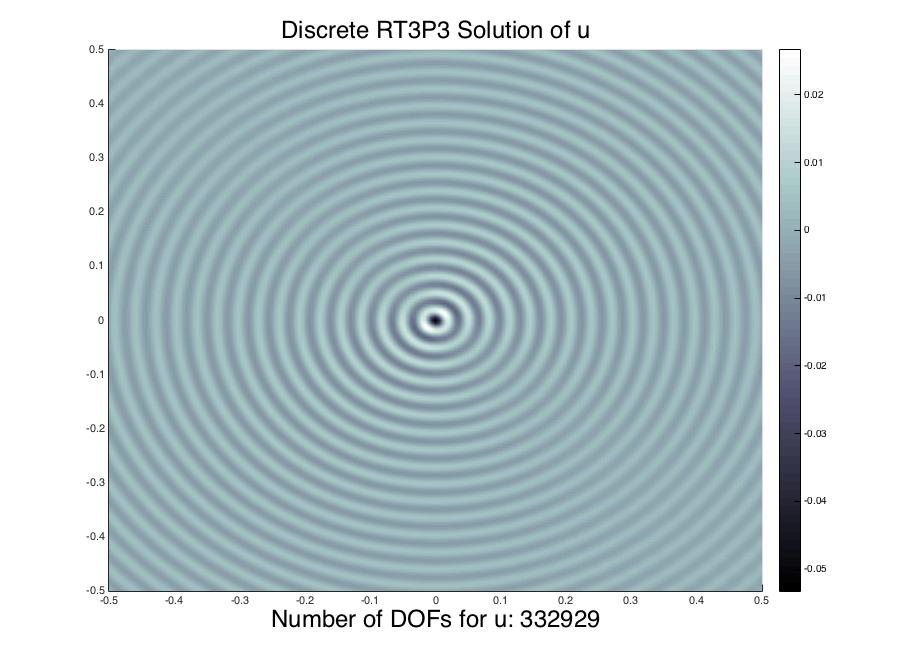}
    \includegraphics[width=2.3in]{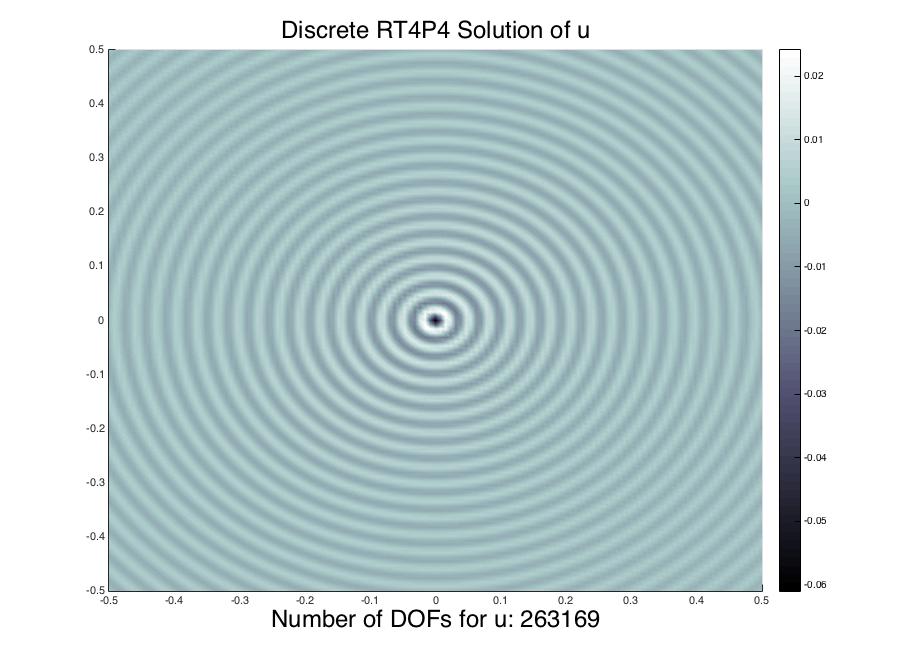}
    \caption{\footnotesize  Surface plot of the imaginary parts of the FOSLS solution $u_h$ based on $RT1P1$ (top-left), $RT2P2$ 
    (top-right), $RT3P3$ (bottom-left), $RT4P4$ (bottom-right) approximations for the case $k = 200$ under the mesh condition 
    $kh/p \approx  0.5$.}\label{fig-3}
\end{figure}

\begin{figure}[htbp]
\centering
    \includegraphics[width=2.3in]{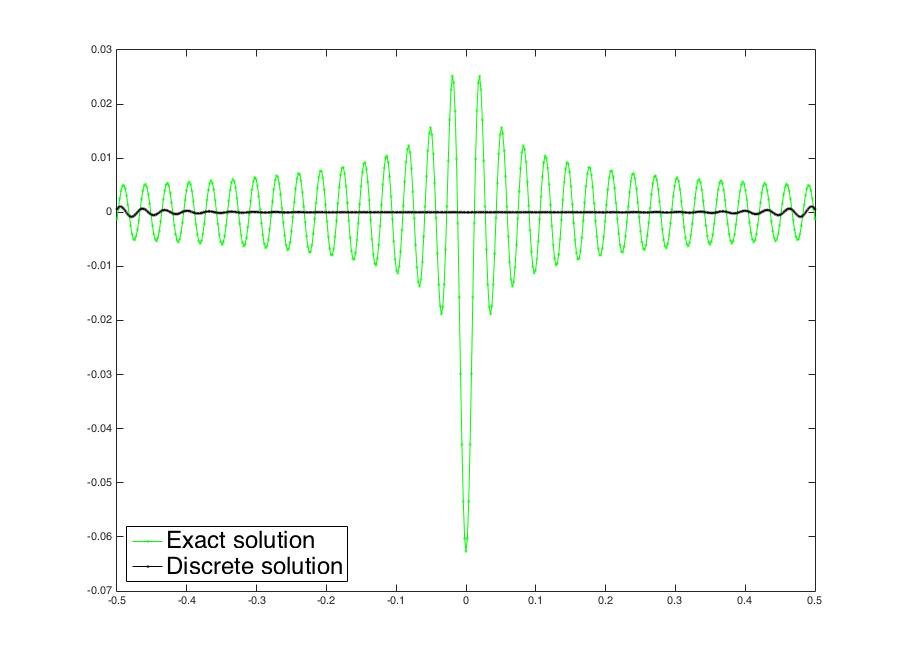}
    \includegraphics[width=2.3in]{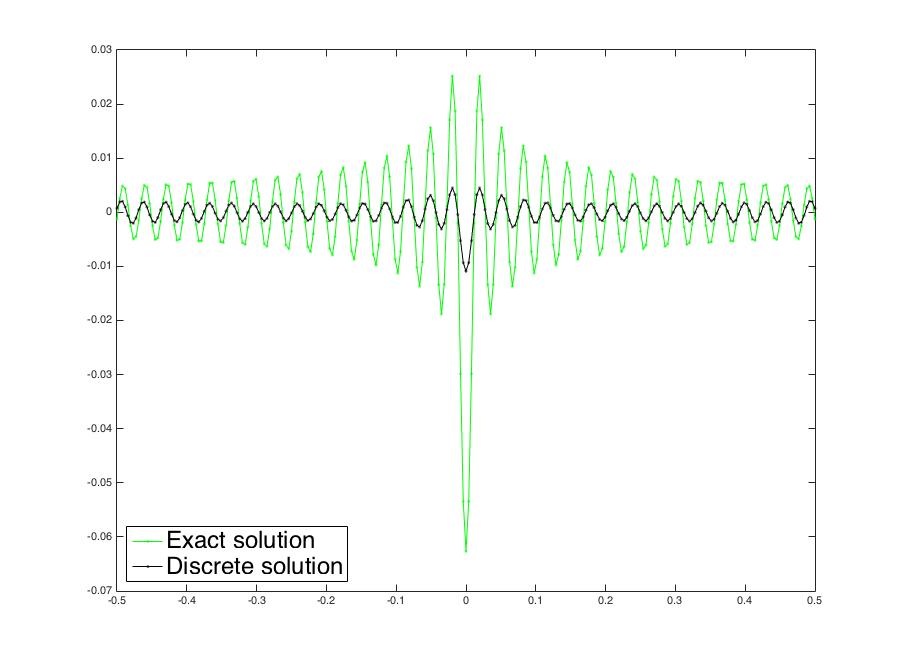}
    \includegraphics[width=2.3in]{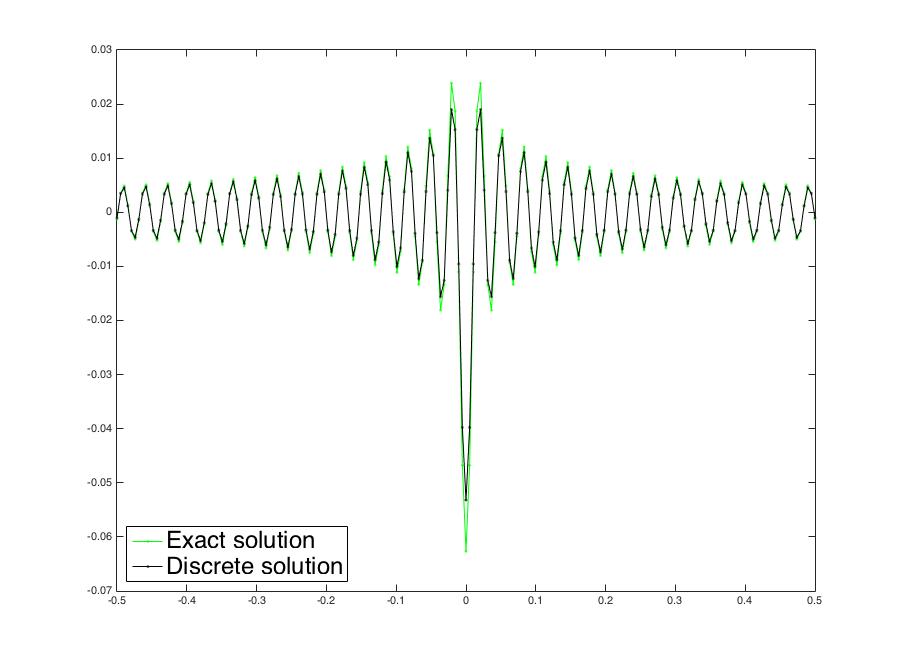}
    \includegraphics[width=2.3in]{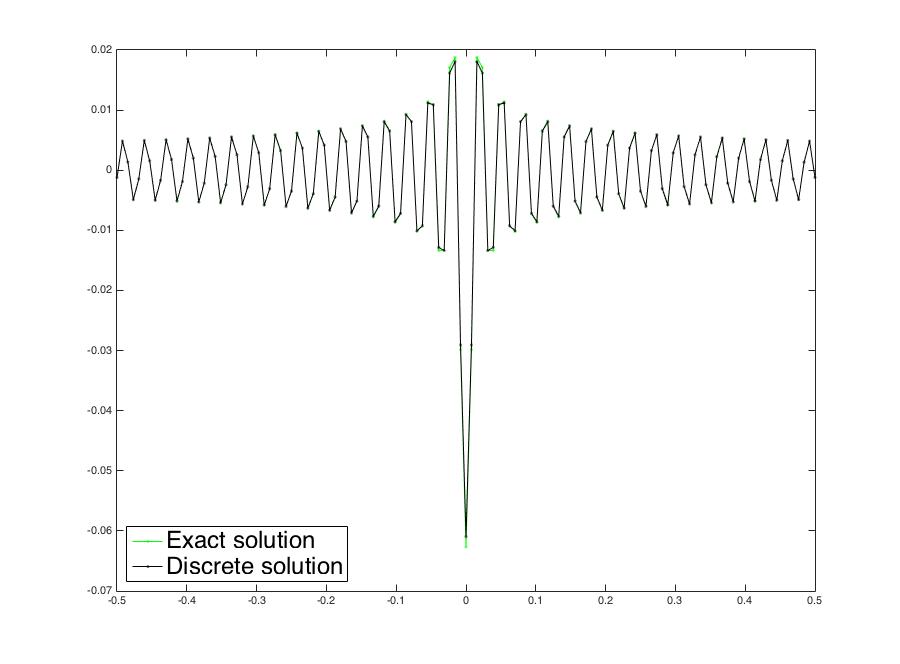}
    \caption{\footnotesize The traces of imaginary part of the FOSLS solution $u_h$ based on $RT1P1$ (top-left), $RT2P2$ 
    (top-right), $RT3P3$ (bottom-left), $RT4P4$ (bottom-right) approximations for the case $k = 200$ under the mesh 
    condition $kh/p \approx  0.5$. The trace of imaginary part of the exact solution is plotted in the green lines.}\label{fig-4}
\end{figure}

For more detailed comparison between FOSLS methods with different polynomial degree approximations, we consider 
the Helmholtz problem with wave number $k = 200$. Figure \ref{fig-3} displays the surface plots of the imaginary parts 
of the FOSLS solutions of $u_h$ based on the $RT1P1$, $RT2P2$, $RT3P3$ and $RT4P4$ approximations under mesh condition 
$kh/p \approx  0.5$. The traces of imaginary part of the FOSLS solution $u_h$ based on the $RT1P1$, $RT2P2$, $RT3P3$ 
and $RT4P4$ approximations in the $xz$-plane under mesh condition $kh/p \approx  0.5$, and the trace of imaginary part of 
the exact solution, are both shown in Figure \ref{fig-4}. It is shown that the FOSLS solutions $u_h$ based on $RT3P3$ and 
$RT4P4$ approximations have almost correct shapes and amplitudes as the exact solution, while the FOSLS solution $u_h$ 
based on low order polynomial approximations does not match the exact solution well. Thus we can observe that although 
the phase error appears in the case of low order polynomial approximation, it can be reduced by high order polynomial approximation.

\end{document}